\newcommand{\edgedraw}[2]{\draw (#1)--(#2);}
\newcommand{\dashedgedraw}[2]{\draw [dashed] (#1)--(#2);}
\newcommand{\thickedgedraw}[2]{\draw [ultra thick] (#1)--(#2);}
\def\Y{{\mathcal Y}}
\def\X{{\mathcal X}}
\newcommand{\IG}{IG}
\newcommand{\dedge}[1]{\ar@{--}[#1]}
\newcommand{\edge}[1]{\ar@{-}[#1]}
\newcommand{\lulab}[1]{\ar@{}[l]_<<{#1}}
\newcommand{\rulab}[1]{\ar@{}[r]^<<{#1}}
\newcommand{\ldlab}[1]{\ar@{}[l]^<<{#1}}
\newcommand{\rdlab}[1]{\ar@{}[r]_<<{#1}}
\newcommand{\node}{*+[o][F-]{ }}
\renewcommand{\theenumi}{\roman{enumi}}
 \newcommand{\gh}{\mathscr{H}}
  \newcommand{\gr}{\mathscr{R}}
  \newcommand{\gl}{\mathscr{L}}
  \newcommand{\gd}{\mathscr{D}}
  \newcommand{\gj}{\mathscr{J}}
    \newcommand{\F}{Q}
 \newcommand{\rank}[1]{\mathrm{rank}{(#1)}}
 \DeclareMathOperator\Coll{Col} \DeclareMathOperator\Roww{Row}
\newtheorem{thm}{Theorem}
\newtheorem{cor}{Corollary}
\newtheorem{defn}{Definition}
\newtheorem{prop}{Proposition}
\newtheorem{lem}{Lemma}
\theoremstyle{definition}
\newtheorem*{acknowledgement}{Acknowledgement}
\numberwithin{equation}{section}
\begin{document}

\title[Free idempotent generated semigroups: full linear monoid]%
{Maximal subgroups of free idempotent generated semigroups over the full linear monoid}

\author{IGOR DOLINKA}
\address{Department of Mathematics and Informatics, University of Novi Sad, Trg Dositeja Obradovi\'ca 4, 21101 Novi Sad,
Serbia} 
\email{dockie@dmi.uns.ac.rs}
\thanks{The research of the first author is supported by the Ministry of Education and Science of the Republic of Serbia
through Grant No.174019, and by a grant (Contract 114--451--2002/2011) of the Secretariat of Science and Technological
Development of the Autonomous Province of Vojvodina.}

\author{ROBERT D. GRAY}
\address{Centro de \'{A}lgebra da Universidade de Lisboa, Av. Prof. Gama Pinto, 2,  1649-003 Lisboa,  Portugal}
\email{rdgray@fc.ul.pt}
\thanks{This work was developed within the project POCTI-ISFL-1-143 of CAUL, supported by FCT}

\subjclass[2010]{Primary 20M05; Secondary 20F05, 15A99, 57M15}

\begin{abstract}
We show that the rank $r$ component of the free idempotent generated semigroup of the biordered set of the full linear
semigroup full of $n \times n$ matrices over a division ring $Q$ has maximal subgroup isomorphic to the general linear
group $GL_r(Q)$, where $n$ and $r$ are positive integers with $r < n/3$.
\end{abstract}

\maketitle

\section{Introduction}
\label{sec_introduction}

The full linear monoid of all $n \times n$ matrices over a field (or more generally a division ring) is one of the most
natural and well studied of semigroups. This monoid plays an analogous role in semigroup theory as the general linear
group does in group theory, and the study of linear semigroups
 \cite{okninski1998}  is important in a range of areas  such as the representation theory of semigroups
 \cite{AMSV}, \cite[Chapter 5]{CP}, Putcha--Renner theory of linear algebraic monoids (monoids closed in the Zariski topology)
 \cite{Putcha, Renner2005, Solomon}, and the theory of finite monoids of Lie type \cite{putchasemisimple,LieType,Putcha1,Putcha2}.

The full linear monoid $M_n(Q)$ (where $Q$ is an arbitrary division ring) is an example of a so-called (von Neumann)
regular semigroup. In 1979 Nambooripad published his foundational paper \cite{nambooripad79} on the structure of
regular semigroups, in which he makes the fundamental observation that the set of idempotents $E(S)$ of an arbitrary
semigroup carries a certain abstract structure of a so-called biordered set (or regular biordered set in the case of
regular semigroups). He provided an axiomatic characterisation of (regular) biordered sets in his paper, and later
Easdown extended this to arbitrary (non-regular) semigroups \cite{easdown85} showing that each abstract biordered set
is in fact the biordered set of a suitable semigroup. Putcha's theory of monoids of Lie type shows that one can view
the biordered set of idempotents of a reductive algebraic monoid as a generalised building \cite{Putcha}, in the
sense of Tits. Thus, in the context of reductive algebraic monoids, a natural geometric structure is carried by the
biordered set of idempotents. We shall not need the formal definition of biordered set here, for more details of the
theory of abstract biordered sets we refer the reader to \cite{higgins92}.

The study of biordered sets of idempotents of semigroups is closely related with the study of idempotent generated
semigroups. Here a semigroup is said to be idempotent generated if every element is expressible as a product of
idempotents of the semigroup. Such semigroups are in abundance in semigroup theory. For instance, every non-invertible
matrix of $M_n(Q)$ is expressible as a product of idempotent matrices \cite{erdos67, laffey83}, and the same result is
true for the full transformation semigroup of all maps from a finite set to itself \cite{howie66}. More recently, in a
significant extension of Erdos's result, Putcha \cite{putcha06} gave necessary and sufficient conditions for a
reductive linear algebraic monoid to have the property that every non-unit is a product of idempotents.
Idempotent generated semigroups have received considerable attention in the literature, in part because of the large
number of semigroups that occur in nature that have this property, and also because of the universal property that they
possess: every semigroup embeds into an idempotent generated semigroup, and if the semigroup is (finite) countable it
can be embedded in a (finite) semigroup generated by $3$ idempotents.

For a fixed abstractly defined biordered set $E$, the collection of all semigroups whose biordered set of idempotents
is (biorder) isomorphic to $E$ forms a category when one restricts morphisms to those that are one-to-one when
restricted to the set of idempotents. There is an initial object in this category, called the \emph{free idempotent
generated semigroup over $E$} and denoted $IG(E)$, that thus maps onto every idempotent generated semigroup with
biordered set $E$ via a morphism that is one-to-one on idempotents.
Clearly an important step towards understanding the class of semigroups with fixed biordered set of idempotents $E$ is
to study the free objects $IG(E)$. For semigroup-theoretic reasons, much of the structure of $IG(E)$ comes down to
understanding the structure of its maximal subgroups. Until recently, very little was known about maximal subgroups of
free idempotent generated semigroups. In fact, in all known cases, all such maximal subgroups had turned
out to be free groups, and in \cite{mcelwee02} it was conjectured that this would always be the case. However, in 2009
Brittenham, Margolis and Meakin \cite{Brittenham2009} gave a counterexample to this conjecture by showing that the free
abelian group of rank $2$ arises as a maximal subgroup of a free idempotent generated semigroup. The proof in \cite{Brittenham2009} makes use of new topological tools introduced for the study of
maximal subgroups of $IG(E)$. In this new theory in a natural way a $2$-complex, called the Graham--Houghton $2$-complex
$GH(E)$, is associated to a regular biordered set $E$ (based on work of Nambooripad \cite{nambooripad79}, Graham
\cite{Gr} and Houghton \cite{Hough}) and the maximal subgroups of $IG(E)$ are the fundamental groups of the connected
components of $GH(E)$. The $2$-cells of $GH(E)$ correspond to the singular squares of $E$ defined by Nambooripad in
\cite{nambooripad79}.

More recently, in \cite{grayta}, an alternative approach to the study of maximal subgroups of free idempotent generated
semigroups was introduced. Using Reidemeister--Schreier rewriting methods originally developed in \cite{Ruskuc1999},
together with methods from combinatorial semigroup theory (that is, the study of semigroups by generators and
relations), a presentation for an arbitrary maximal subgroup of $IG(E)$ was given in \cite[Theorem~5]{grayta}. Then
applying this result it was shown that, in fact, every abstract group arises as a maximal subgroup of $IG(E)$, for an
appropriately chosen biordered set. Moreover, it was shown that every finitely presented group is a maximal subgroup of
a free idempotent generated semigroup over a finite biordered set $E$.

Other recent work in the area includes \cite{dolinka} where free idempotent generated semigroups over bands are investigated, and it is shown that there is a regular band $B$ such that $IG(B)$ has a maximal subgroup isomorphic to the free abelian group of rank $2$.

However, the structure of the maximal subgroups of free idempotent generated semigroups on naturally occurring
biordered sets, such as the biordered set of the full linear monoid $M_n(Q)$ over a division ring $Q$, remained far
from clear. In a recent paper \cite{margolismeakinip} Brittenham, Margolis and Meakin further developed their
topological tools to study this problem. The main result of \cite{margolismeakinip} shows that the rank 1 component of
the free idempotent generated semigroup of the biordered set of a full matrix monoid of size $n \times n, n>2$, over a
division ring $Q$ has maximal subgroup isomorphic to the multiplicative subgroup of $Q$. This result provided the first
natural example of a torsion group that arises as a maximal subgroup of a free idempotent generated semigroup on some
finite biordered set, answering a question raised in \cite{easdownta}. It is remarked in \cite{margolismeakinip} that
the methods used there seem difficult to extend to higher ranks. Here we shall extend their result, showing that
general linear groups arise as maximal subgroups in higher rank components.

As mentioned above, the \emph{free idempotent generated semigroup over $E$} is the universal object in the category of
all idempotent generated semigroups whose biordered sets of idempotents are isomorphic to $E$.
Given a semigroup $S$ with set of idempotents $E=E(S)$ the free idempotent generated semigroup over $E$ is the semigroup defined by the following presentation.
\begin{equation}
\label{eq1} \IG(E)=\langle E\:|\: e\cdot f= ef\ (e,f\in E,\ \{ e,f\}\cap\{ef,fe\}\neq\emptyset)\rangle.
\end{equation}
(It is an easy exercise to show that if, say, $fe\in\{e,f\}$ then $ef\in E$. In the defining relation $e\cdot f=ef$ the
left hand side is a word of length $2$, and $ef$ is the product of $e$ and $f$ in $S$, i.e. a word of length $1$.) The
idempotents of $S$ and $\IG(E)$ are in natural one-one correspondence (see Proposition~\ref{prop_basic}\eqref{IG2}
below), and we will identify the two sets throughout. We may now state our main result.
\begin{thm} \label{thm_main}
Let $n$ and $r$ be positive integers with $r < n/3$, let $E$ be the biordered set of idempotents of the full linear
monoid $M_{n}(\F)$ of all $n \times n$ matrices over an arbitrary division ring $\F$, and let $W$ be an idempotent
matrix of rank $r$.
Then the maximal subgroup of $IG(E)$ with identity $W$ is isomorphic to the general linear group $GL_{r}(\F)$.
\end{thm}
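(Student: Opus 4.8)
The plan is to translate the problem into a group presentation by means of the Reidemeister--Schreier machinery of \cite[Theorem~5]{grayta}, and then to recognise the resulting group as $GL_r(\F)$ by matching its presentation against a presentation of $GL_r(\F)$ in terms of elementary matrices. First I would describe the relevant $\gd$-class $D$ of $M_n(\F)$ --- the set of all rank-$r$ matrices --- in purely linear-algebraic terms. Taking matrices to act on column vectors, the $\gr$-class of a matrix is recorded by its image (column space, an $r$-dimensional subspace of $\F^n$) and its $\gl$-class by its row space, so both sets of classes are indexed by $r$-dimensional subspaces; the group $\gh$-class $H_W$ is isomorphic to $GL_r(\F)$, and the rank-$r$ idempotents are the projections, each determined by a pair $(X,Y)$ of complementary subspaces with $X\oplus Y=\F^n$. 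An $\gh$-class $H_{i\lambda}$ then contains an idempotent precisely when the subspaces recording its row $i$ and column $\lambda$ are transverse.

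Next I would apply \cite[Theorem~5]{grayta} to obtain a presentation of the maximal subgroup of $\IG(E)$ with identity $W$. Its generators $f_{i\lambda}$ are indexed by the idempotent-containing $\gh$-classes $H_{i\lambda}$ of $D$, and its relations come in two families: the spanning-tree relations $f_{i\lambda}=1$ for the edges of a chosen spanning tree of the Graham--Houghton graph of $D$, and the singular-square relations $f_{i\lambda}f_{i\mu}^{-1}f_{j\mu}f_{j\lambda}^{-1}=1$, one for each singular square lying on rows $i,j$ and columns $\lambda,\mu$. The substantive work is to evaluate these combinatorial data inside $M_n(\F)$: I would fix a basepoint idempotent conjugate to $W$, choose coordinates, and compute both the structure matrix of the Rees matrix representation of $D$ and the singular squares explicitly in terms of direct-sum decompositions of $\F^n$.

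Here the hypothesis $r<n/3$ enters decisively and repeatedly. It guarantees enough room in $\F^n$ to place the relevant $r$-dimensional subspaces in sufficiently general position --- in particular to produce an $(n-r)$-dimensional subspace transverse simultaneously to three prescribed $r$-dimensional subspaces, since $\dim(X_1+X_2+X_3)\le 3r<n$ leaves a nonzero complement to work with. This is exactly what is needed to certify that the required $\gh$-classes contain idempotents and that the required squares are singular, so that the corresponding generators and relations are genuinely available in the presentation.

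Finally I would identify the presented group with $GL_r(\F)$. The canonical homomorphism $\IG(E)\onto M_n(\F)$ restricts to a surjective homomorphism $\phi$ from the maximal subgroup onto $H_W\cong GL_r(\F)$, giving one inclusion. For the reverse direction I would use the singular-square relations to rewrite an arbitrary word in the $f_{i\lambda}$ into a normal form, showing that the presented group is generated by elements whose images are elementary transvections and diagonal matrices, and that the singular-square relations realise a complete set of defining relations for $GL_r(\F)$ in those generators. The main obstacle is precisely this last identification: one must verify that the singular squares permitted by the $r<n/3$ bound supply \emph{enough} relations to force $\phi$ to be injective --- so that the group does not collapse onto something strictly larger --- which amounts to realising each Steinberg-type relation among elementary matrices by an explicit singular square, and then checking that no further relations are needed.
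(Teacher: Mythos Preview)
Your setup matches the paper's: both start from the presentation with generators $f_{X,Y}$ indexed by idempotent $\gh$-classes, spanning-tree relations, and singular-square relations (the paper derives it from the Graham--Houghton complex rather than \cite{grayta}, but the presentation is the same), and both note that the natural map onto $GL_r(\F)$ is surjective so only injectivity is at issue.

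Where you diverge is in the identification step, and here your plan has a genuine gap. You propose to match the presented group against a Steinberg-type presentation of $GL_r(\F)$ by elementary transvections and diagonal matrices, realising each Steinberg relation by an explicit singular square. The difficulty is that for an \emph{arbitrary division ring} $\F$ no such off-the-shelf presentation of $GL_r(\F)$ is available to match against: Steinberg relations present the elementary subgroup, and passing from there to $GL_r(\F)$ involves $K_1$ and $K_2$ considerations that are delicate already over fields and worse over skew fields. Your final clause ``and then checking that no further relations are needed'' is precisely the whole problem, and nothing in the proposal indicates how you would do it.

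The paper sidesteps this completely by never invoking an external presentation of $GL_r(\F)$. Instead it proves, using only the singular-square relations, that (i) $f_{X,Y}=1$ whenever $YX=I_r$; (ii) $f_{X,Y}$ depends only on the product $YX\in GL_r(\F)$, so the generators may be relabelled $f_A$ for $A\in GL_r(\F)$; and (iii) for every $A,B\in GL_r(\F)$ the relation $f_Bf_A=f_{AB}$ is a consequence of a single explicit singular square. Step~(iii) gives the full Cayley-table presentation of $GL_r(\F)$, which is available for any group, so injectivity follows immediately. The hard work is entirely in steps (i) and (ii), which are proved by combinatorial connectivity arguments about where the entries $I_r$ and a fixed $K\in GL_r(\F)$ sit in the Rees structure matrix $P_r$; the bound $r<n/3$ is used there to force linear dependence among more than $r$ columns (in the proof of the strong-connectivity Theorem~\ref{thm_thefulltable}) and to leave room for the $3r$-column block square in step~(iii). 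Your reading of $r<n/3$ as a general-position condition for three $r$-dimensional subspaces is in the right spirit, but the actual argument is this rather specific column-counting.
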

Observe here that the condition $r < n/3$ forces $n \geq 4$. Theorem~\ref{thm_main} extends the main result of
\cite{margolismeakinip} where Theorem~\ref{thm_main} is proved in the case $r=1$ and $n \geq 3$. In particular
Theorem~\ref{thm_main} shows that arbitrary general linear groups arise as maximal subgroups of naturally occurring
biordered sets.
An analogous result for the full transformation semigroup $T_n$ of all mappings from the set $\{1,\ldots,n\}$ to itself
under composition was recently established in \cite{GR2} where it is shown how the standard Coxeter presentation for
the symmetric group $S_r$ is encoded by the set of all idempotents with image size $r$.
Our methods do not extend to higher values of $r$, and the problem of describing the maximal subgroups in those cases
remains open (see Section~\ref{sec_concluding} for further discussion of this).

The proof of Theorem~\ref{thm_main} is broken down into stages. For each stage of the proof the initial algebraic
problem will be recast in purely combinatorial terms. At the heart of the proof will be the detailed analysis of
various connectedness conditions satisfied by the structure matrices of the principal factors of the monoid $M_n(Q)$.
Several different notions of connectedness arise, the first of which will be analysed using a coloured bipartite graph
representation, closely related to the Graham--Houghton graphs employed in \cite{Brittenham2009,margolismeakinip}, while later connectedness conditions concern graphs
obtained in a natural way from occurrences of symbols arising in multiplication tables of semigroups of matrices. The
fact that these notions of connectedness are central to the proof reflects the natural geometric and topological
structure underlying the problem, as explored in detail in \cite{Brittenham2009,margolismeakinip}.

The paper is structured as follows. In Section~\ref{sec_preliminaries} we give the necessary background on matrix
semigroups over division rings and on free idempotent generated semigroups, and then we go on to apply
results from \cite{Brittenham2009,margolismeakinip} to
write down a presentation for an arbitrary maximal subgroup of a rank $r$ idempotent in
$IG(E(M_n(Q)))$. The remainder of the paper is concerned with proving that, when $r < n/3$, the group that this
presentation defines is $GL_r(Q)$. This proof is broken down into three main steps which are explained in
Section~\ref{sec_outline}. We work through the main steps of the proof over Sections~\ref{sec_Ilabels},
\ref{sec_multtables}, \ref{sec_lambdalabels} and \ref{sec_uncovering}. Finally, in Section~\ref{sec_concluding} we
discuss some open problems and possible directions for future research.

\section{Preliminaries}
\label{sec_preliminaries}

\subsection*{Matrix semigroups}

Throughout this paper, $Q$ will be an arbitrary fixed division ring, $M_n(Q)$ will denote the full linear monoid of $n
\times n$ matrices over $Q$, and we shall use $M_{m \times l}(Q)$ to denote the set of all $m \times l$ matrices over
$Q$, for positive integers $m$, $l$. We let $GL_n(Q)$ denote the general linear group of all invertible $n \times n$
matrices over $Q$, which is, of course,  the group of units of the monoid $M_n(Q)$. We also choose and fix an arbitrary
idempotent $W$ of $M_n(Q)$ of rank $r < n/3$. Since we are working over a division ring, which might not be
commutative, some care needs to be taken here with notions like the rank of a matrix. Linear combinations of rows will
always be taken using left scalar multiplication, and linear combinations of columns will be taken using right scalar
multiplication. Thus by the row space $\Roww A$ of a matrix $A$ we shall mean left row space, by the column space $\Coll A$ of
$A$  we shall mean right column space, and by the rank of a matrix we mean the left row rank of the matrix, which is
equal to its right column rank.

With $E$ equal to the biordered set of idempotents of $M_n(Q)$ our aim is to prove that the maximal subgroup of the
free idempotent generated semigroup $IG(E)$ with identity $W$ is isomorphic to the general linear group $GL_r(Q)$.
Since any pair of maximal subgroups in the same $\gd$-class of a semigroup are isomorphic, by
Proposition~\ref{prop_basic}\eqref{IG3} below it follows that without loss of generality we may take
\[
W=
\left[%
\begin{array}{cc}
  I_{r} & 0 \\
  0 & 0 \\
\end{array}%
\right]
\]
where $I_r$ denotes the $r \times r$ identity matrix.

In general, important structural information about a semigroup may be obtained by studying its ideal structure. Since
their introduction in \cite{Green1951}, Green's relations have provided a powerful tool for the investigation of the
ideal structure of semigroups. Recall that two elements $s$ and $t$ of a semigroup $S$ are said to be $\gr$-related if
they generate the same principal right ideal, $\gl$-related if they generate the same principal left ideal, and
$\gj$-related if they generate the same principal two-sided ideal, that is
\begin{align*}
s \gr t \Leftrightarrow sS \cup \{ s \} = tS \cup \{ t \}, & &
s \gl t \Leftrightarrow Ss \cup \{ s \} = St \cup \{ t \},
\end{align*}
\[
s \gj t \Leftrightarrow SsS \cup sS \cup Ss \cup \{ s \} = StS \cup tS \cup St \cup \{ t \}.
\]
In addition, we have the relations $\gh = \gr \cap \gl$ and $\gd = \gr \circ \gl = \gl \circ \gr$ which is the join of $\gr$ and $\gl$ in the lattice of equivalence relations on $S$. Given an element $a \in S$ we use $R(a,S)$ to denote its $\gr$-class, and similarly we use the notation $L(a,S)$, $J(a,S)$, $H(a,S)$ and $D(a,S)$.
Let $e \in S$ be an idempotent. The set $e S e$ is a submonoid in $S$ with identity element $e$, and it is the largest submonoid of $S$ which has $e$ as identity. The group of units of $e S e$ is the largest subgroup of $S$ with identity $e$, and is called the maximal subgroup of $S$ containing $e$. This maximal subgroup is precisely the $\gh$-class $H(e,S)$ of $S$ that contains the idempotent $e$. More background on Green's relations and their importance in semigroup theory may be found, for example, in \cite{howie95}.

One particularly important class are those semigroups that do not have any proper two-sided ideals. A semigroup $S$ is
called simple if its only ideal is $S$ itself, and a semigroup with zero $0 \in S$ is called $0$-simple if $\{ 0 \}$
and $S$ are its only ideals (and $S^2 \neq \{ 0 \}$). A semigroup is called completely ($0$-)simple if it is
($0$-)simple and has ($0$-)minimal left and right ideals, under the natural orders on left and right ideals by
inclusion.

Let $S$ be a completely $0$-simple semigroup. The Rees theorem \cite[Theorem~3.2.3]{howie95} states that $S$ is
isomorphic to a regular Rees matrix semigroup $\mathcal{M}^0[G; I, \Lambda; P]$ over a group $G$, and conversely that
every such semigroup is completely $0$-simple. Here $G$ is a group, $I$ and $\Lambda$ are index sets, $P = (p_{\lambda
i})$ is a regular $\Lambda \times I$ matrix over $G \cup \{ 0 \}$ (where regular means that every row and column of the
matrix contains at least one non-zero entry) called the structure matrix, and $S = \mathcal{M}^0[G; I, \Lambda; P]$ is
the semigroup with elements $(I \times G \times \Lambda) \cup \{ 0 \}$ and multiplication defined by
$(i,g,\lambda)(j,h,\mu) = (i,gp_{\lambda,j}h,\mu)$ if $p_{\lambda,j} \neq 0$, and $0$ otherwise.

The importance of $0$-simple semigroups comes from the way in which they may be viewed as basic building blocks of
arbitrary semigroups. Indeed, given a $\gj$-class $J$ of a semigroup $S$ we can form a semigroup $J^0$ from $J$, called
the principal factor of $S$ corresponding to $J$, where $J^0 = J \cup \{ 0 \}$ and multiplication $*$ is given by $s *
t = st$ if $s, t, st \in J$, and $s * t = 0$ otherwise.
It is well known (see \cite{howie95}) that $J^0$ is then either a semigroup with zero multiplication, or $J^0$ is a $0$-simple semigroup. Recall that a semigroup $S$ is called (von-Neumann) regular if $a \in aSa$ for all $a \in S$. A semigroup is regular if and only if every $\gr$-class (equivalently every $\gl$-class) contains at least one idempotent.

Now let us turn our attention back to the full linear monoid $M_n(Q)$. Linear semigroups have received a lot of
attention in the literature, and much is known about the structure of the full linear semigroup $M_n(Q)$; see
\cite{okninski1998, Putcha}. Let us now recall some of these basic fundamental facts regarding $M_n(Q)$ that we shall
need in what follows.
The semigroup  $S=M_{n}(Q)$ is a (von-Neumann) regular semigroup. More than this, it is completely semisimple meaning
that each of its principal factors is a completely $0$-simple semigroup, and thus by the Rees theorem, each principal
factor of $M_n(Q)$ is isomorphic to some Rees matrix semigroup over a group.

The set of matrices of a fixed rank $r \leq n$ forms a $\gj$-class in the monoid $M_{n}(Q)$. In fact, $\gj = \gd$ in
$M_{n}(Q)$ and for matrices $X,Y \in M_{n}(\F)$, we have
\begin{eqnarray}
\label{eqn_J} X \gd Y & \Leftrightarrow &
GL_{n}(Q)
 \, X \,
  GL_{n}(Q) = GL_{n}(Q)
  \,  Y \,
    GL_{n}(Q) \\
& \Leftrightarrow & \rank{X} = \rank{Y}.
\end{eqnarray}
The maximal subgroups of the $\gd$-class of all matrices of rank $r$ are isomorphic to $GL_{r}(Q)$.  Green's relations
$\gr$ and $\gl$ in $M_{n}(\F)$ are described by
\begin{align}
\label{eqn_R} X \gr Y
 \Leftrightarrow
X  \, GL_{n}(Q) = Y  \, GL_{n}(Q)
 \Leftrightarrow
\Coll X  = \Coll Y,
\end{align}
and
\begin{align}
\label{eqn_L} X \gl Y
 \Leftrightarrow
GL_{n}(Q)  \, X = GL_{n}(Q)  \, Y
 \Leftrightarrow
\Roww X = \Roww  Y.
\end{align}
Let $D_r$ be the $\gd$-class of $M_n(\F)$ consisting of the rank $r$ matrices, where $1 \leq r < n$, and let $D_{r}^0$ be the corresponding
principal factor, which we know is a completely $0$-simple semigroup.
Following \cite{margolismeakinip} we now write down a natural Rees matrix representation for the principal factor $D_r^0$.
At the heart of our proof will be a detailed analysis of the combinatorial properties of the structure matrix $P_r$ of this Rees matrix semigroup.

Recall that a matrix is said to be in \emph{reduced row echelon form} (RRE for short) if the following conditions are satisfied
\begin{itemize}
\item all nonzero rows (rows with at least one nonzero element) are above any rows of all zeros,
\item the leading coefficient (the first nonzero number from the left, also called the pivot) of a nonzero row is always strictly to the right of the leading coefficient of the row above it, and
\item every leading coefficient is 1 and is the only nonzero entry in its column.
\end{itemize}
Given an $r \times q$ matrix $A$ in RRE form we use $LC(A)$ to denote the subset of $\{1,\ldots,q\}$ indexing the
leading columns of the matrix, that is, the columns containing the leading 1s. If $A$ is an $r \times q$ matrix in RRE
form, and if $A$ has rank $r$, then all of the rows must be non-zero and the leading coefficient in every row is $1$,
and therefore $A$ must have exactly $r$ leading columns which are, in order, the transposes of the $1 \times r$
standard basis vectors $ \{ [1,0,\ldots,0], [0,1,\ldots,0], \ldots, [0,0,\ldots,1] \}. $ Therefore, for every $r \times
q$ rank $r$ matrix $A$ in RRE form, $LC(A)$ is an $r$-element subset of $\{1,\ldots,q\}$. Dually, given the transpose
$B$ of a matrix $B^T$ in RRE form, we use $LR(B)$ to denote the set of numbers indexing the leading rows of $B$.

Let $\Y_r$ denote the set of all $r \times n$ rank $r$ matrices in RRE form, and let $\X_r$ denote the set of transposes of elements of $\Y_r$. The structure of $D_{r}^0$ is described in the following theorem
(see \cite{okninski1998}).
\begin{thm} \label{thm_rees}
\begin{sloppypar}
The principal factor $D_{r}^0$ of $M_n(Q)$ is isomorphic to the Rees matrix semigroup ${\mathcal
M}^{0}(GL_{r}(Q);\mathcal{X}_r, \mathcal{Y}_r;P_r)$ where the structure matrix $P_{r} = (P_r(Y,X))$ is defined for  $Y
\in \mathcal{Y}_{r}$, $X\in \mathcal{X}_r$ by $P_r(Y,X) = YX$ if $YX$ is of rank $r$ and $0$ otherwise.
\end{sloppypar}
\end{thm}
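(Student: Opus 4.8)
The plan is to produce an explicit isomorphism $\phi \colon {\mathcal M}^{0}(GL_{r}(Q);\mathcal{X}_r, \mathcal{Y}_r;P_r) \to D_r^0$ and verify that it respects multiplication. First I would set up the coordinates: every rank $r$ matrix $A \in D_r$ has a well-defined row space and column space, and by \eqref{eqn_R} and \eqref{eqn_L} these determine the $\gr$- and $\gl$-classes of $A$ inside $M_n(Q)$. The key normalisation step is that each right ideal (equivalently each $\gr$-class, equivalently each right column space of dimension $r$) contains a unique element of $\mathcal{X}_r$, namely the transpose of the unique RRE representative of the corresponding row space; dually each $\gl$-class contains a unique $Y \in \mathcal{Y}_r$. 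Thus $\mathcal{X}_r$ indexes the $\gr$-classes and $\mathcal{Y}_r$ indexes the $\gl$-classes of $D_r$, matching the index sets of the Rees matrix semigroup.

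Next I would pin down the $\gh$-classes. Fix $X \in \mathcal{X}_r$ and $Y \in \mathcal{Y}_r$. The product $YX$ is an $r \times r$ matrix, and I would argue that the $\gh$-class $H_{X,Y} = R(X) \cap L(Y)$ is a group $\gh$-class (contains an idempotent) precisely when $YX$ is invertible, i.e.\ has rank $r$; this is exactly the condition appearing in the definition of $P_r(Y,X)$. When $YX$ has rank $r$, every element of $H_{X,Y}$ can be written uniquely as a product, and I would use this to define $\phi(X,g,Y)$ as the unique rank $r$ matrix lying in $R(X) \cap L(Y)$ whose associated $GL_r(Q)$-coordinate is $g$. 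Concretely, one chooses, once and for all, base points and checks that left-multiplication by $\mathcal{Y}_r$-data and right-multiplication by $\mathcal{X}_r$-data parametrise the Rees coordinates of $D_r$ bijectively.

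The heart of the verification is the multiplication rule. Given $(X,g,Y)$ and $(X',g',Y')$, their product in $M_n(Q)$ stays in $D_r$ iff the corresponding matrix product does not drop rank, and I would show this happens iff $YX'$ has rank $r$, i.e.\ iff $P_r(Y,X') \neq 0$; otherwise the product falls into a lower $\gd$-class and hence is $0$ in the principal factor $D_r^0$. When $P_r(Y,X') = YX'$ is invertible, a direct computation of row and column spaces shows the product lies in $R(X) \cap L(Y')$, and tracking the $GL_r(Q)$-coordinate yields $g\,(YX')\,g' = g\,P_r(Y,X')\,g'$, which is exactly the Rees product $(X, g\,P_r(Y,X')\,g', Y')$. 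This matches the multiplication $(i,g,\lambda)(j,h,\mu) = (i,g\,p_{\lambda,j}\,h,\mu)$ of the Rees matrix semigroup.

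The main obstacle, and the step requiring genuine care, is working over a noncommutative division ring $Q$: I must respect the left/right conventions fixed in the preliminaries (left row space, right column space, left scalar multiplication on rows, right on columns) so that the $GL_r(Q)$-coordinates compose in the correct order and the product $g\,P_r(Y,X')\,g'$ comes out as a genuine product of invertible $r \times r$ matrices rather than its opposite. Once the coordinates are chosen consistently with these conventions, regularity of $P_r$ (every row and column has a nonzero entry, guaranteed since each RRE class pairs with at least one complementary class giving an invertible product) confirms that the constructed semigroup is completely $0$-simple and isomorphic to $D_r^0$, as asserted.
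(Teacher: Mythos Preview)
The paper does not actually prove this theorem; it is quoted as a known structural fact with a reference to Okni\'nski's monograph (see the line ``The structure of $D_{r}^0$ is described in the following theorem (see \cite{okninski1998})'' immediately preceding the statement). So there is no in-paper argument to compare against.

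Your outline is essentially the standard proof and is correct. The one place where you remain vague is the concrete definition of $\phi$: rather than ``choosing base points'' abstractly, the clean move is simply to set $\phi(X,g,Y)=XgY$ (an $n\times r$ times $r\times r$ times $r\times n$ product). This visibly lands in $R(X)\cap L(Y)$ since $X$ has full right column rank and $Y$ full left row rank, and the Rees multiplication check becomes the one-line computation $(XgY)(X'g'Y')=X\bigl(g(YX')g'\bigr)Y'$, which is rank $r$ iff $YX'\in GL_r(Q)$ and otherwise drops rank and hence equals $0$ in $D_r^0$. With this explicit formula the left/right conventions over the noncommutative $Q$ that you rightly flag take care of themselves, and bijectivity follows from uniqueness of (transposed) RRE form together with invertibility of $g$.
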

Given $X \in \X_r$ and $Y \in \Y_r$ we shall use $R(X)$ to denote the $\gr$-class indexed by $X$, and $L(Y)$ to denote the
$\gl$-class indexed by $Y$. So, the $\gr$-classes of $D_r$ are indexed by $\X_r$, the
$\gl$-classes by $\Y_r$, and the $\gh$-class $R(X) \cap L(Y)$ contains an idempotent if and only if $P_r(Y,X) \neq 0$ which is
true if and only if $YX$ has rank $r$. In this case we use $e_{X,Y}$ to denote the unique idempotent in the group
$\gh$-class $R(X) \cap L(Y)$.

\subsection*{Free idempotent generated semigroups} Let $S$ be a semigroup, let $E = E(S)$ be the set of idempotents of $S$, and let $IG(E)$ be the free idempotent generated semigroup over $E$ defined by the presentation \eqref{eq1}.
Some fundamental basic properties of the semigroup $IG(E)$ are summarised in the following statement.

\renewcommand{\theenumi}{\textsf{(IG\arabic{enumi})}}
\renewcommand{\labelenumi}{\theenumi}

\begin{prop}\label{prop_basic}
Let $S$ be a semigroup and $E=E(S)$.
The free idempotent generated semigroup $\IG(E)$ has the following properties:
\begin{enumerate}[(i)]
\item
\label{IG1}
There exists a natural homomorphism $\phi$ from $\IG(E)$ onto the subsemigroup $S^\prime$ of $S$ generated by $E$.
\item
\label{IG2}
The restriction of $\phi$ to the set of idempotents of $\IG(E)$ is a bijection onto $E$ (and an isomorphism of biordered sets). Thus we may identify those two sets.
\item
\label{IG3}
$\phi$ maps the $\gr$-class (respectively $\gl$-class) of $e\in E$ onto the corresponding class of $e$ in $S^\prime$; this induces a bijection between the set of all $\gr$-classes (resp. $\gl$-classes) in the
$\gd$-class of $e$ in $\IG(E)$ and the corresponding set in $S^\prime$.
\item
\label{IG4}
The restriction of $\phi$ to the maximal subgroup of $\IG(E)$ containing $e\in E$ (i.e. to the $\gh$-class of $e$ in $\IG(E)$) is a homomorphism onto the maximal subgroup of $S^\prime$ containing $e$.
\end{enumerate}
\end{prop}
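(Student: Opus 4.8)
The plan is to build the homomorphism $\phi$ first and then read off the four statements, leaning on the structure theory of regular biordered sets for the substantive points. For \eqref{IG1} I would define $\phi$ on the generating set by sending each generator $e$ of $\IG(E)$ to the idempotent $e\in S$, and invoke the defining (universal) property of the presentation \eqref{eq1}: every defining relation $e\cdot f=ef$ is a valid equality in $S$, since under the stated hypothesis $ef$ lies in $E$ and the right-hand side is literally the product of $e$ and $f$ computed in $S$. Hence $\phi$ extends to a homomorphism $\IG(E)\to S$ whose image is the subsemigroup $\langle E\rangle=S'$ generated by the idempotents. Taking $f=e$ in \eqref{eq1} shows each generator $e$ is idempotent in $\IG(E)$, and if two generators were equal in $\IG(E)$ then applying $\phi$ would identify the corresponding idempotents of $S$; thus the generators form a faithful copy of $E$ inside $E(\IG(E))$ on which $\phi$ is a bijection onto $E$, which already gives the surjective half of \eqref{IG2}.

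The heart of \eqref{IG2} is the converse: that these generators exhaust the idempotents of $\IG(E)$, so that $\phi$ is injective on all of $E(\IG(E))$ and restricts to a biordered-set isomorphism. This is where I expect the real work to lie, and I would not attempt it by formal manipulation of words; instead I would appeal to the foundational theory of regular biordered sets. Since $S=M_n(Q)$ is regular, $E=E(M_n(Q))$ is a regular biordered set, and by the results of Nambooripad \cite{nambooripad79} and Easdown \cite{easdown85} the free object $\IG(E)$ is itself a regular semigroup whose biordered set of idempotents is isomorphic to $E$, the isomorphism being exactly $\phi$. The point is that the relations of \eqref{eq1} encode precisely the basic products of $E$, which is enough to force any idempotent word to collapse to a single generator; establishing this is the main obstacle of the whole proposition.

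Granting \eqref{IG2} and the regularity of $\IG(E)$, statements \eqref{IG3} and \eqref{IG4} would follow from general facts about surjective idempotent-separating homomorphisms between regular semigroups. Any homomorphism preserves $\gr$, $\gl$, and hence $\gh$, in the forward direction, so $\phi$ carries the $\gr$-class (resp.\ $\gl$-class, $\gh$-class) of $e$ into that of $\phi(e)=e$ in $S'$, and surjectivity of $\phi$ together with Lallement's lemma \cite{howie95} makes these maps onto. For the asserted bijection between the $\gr$-classes lying in the $\gd$-class of $e$, I would use that in a regular $\gd$-class every $\gr$-class contains an idempotent, so the idempotent bijection of \eqref{IG2} — which, being a biordered-set isomorphism, reflects $\gr$ and $\gl$ on idempotents — upgrades the forward map to a bijection, and symmetrically for $\gl$. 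Finally, for \eqref{IG4} the restriction of $\phi$ to the group $H(e,\IG(E))$ is automatically a group homomorphism into $H(e,S')$ sending identity to identity; its surjectivity is the one formal subtlety, since a surjective homomorphism need not in general carry units onto units, but it follows here from regularity together with the $\gr$- and $\gl$-class bijections of \eqref{IG3} via an application of Green's lemma inside the $\gd$-class of $e$.
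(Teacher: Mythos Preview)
Your outline is broadly sound, but note two points of divergence from the paper.

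First, you specialise to $S=M_n(Q)$ and invoke regularity of $E$, whereas the proposition is stated for an arbitrary semigroup $S$. The paper's proof of \ref{IG2} cites both \cite{nambooripad79} and \cite{easdown85} precisely because Easdown's result covers the non-regular case: for any biordered set $E$, the idempotents of $\IG(E)$ form a copy of $E$. Your argument for \ref{IG2} is the right appeal, but it should not be tied to regularity of $S$; you lose nothing by stating it in full generality.

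Second, the paper's proof is entirely by reference: \ref{IG1} is declared obvious, \ref{IG2} is attributed to \cite{nambooripad79,easdown85}, \ref{IG3} is said to be a corollary of Fitz-Gerald's theorem \cite{fitzgerald72}, and \ref{IG4} is said to follow from \ref{IG2}. Your route to \ref{IG3} is different: you argue directly that in a regular $\gd$-class every $\gr$-class contains an idempotent, and then use the biordered-set isomorphism of \ref{IG2} to transport $\gr$-classes bijectively. This is a legitimate alternative and arguably more self-contained, but the paper instead leans on Fitz-Gerald's result (that inverses of products of idempotents are again products of idempotents), which guarantees the idempotent-generated subsemigroup $S'$ is itself regular and that the Green's structure lifts appropriately. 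Similarly, for \ref{IG4} you invoke \ref{IG3} together with Green's lemma to obtain surjectivity onto $H(e,S')$, while the paper simply asserts this follows from \ref{IG2}; your version makes the surjectivity step more explicit. Both approaches are valid; yours is more hands-on, the paper's is a terse deferral to the literature.
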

\begin{proof}
The assertion \eqref{IG1} is obvious; \eqref{IG2} is proved in \cite{nambooripad79} and \cite{easdown85}; \eqref{IG3} is a corollary of \cite{fitzgerald72}; \eqref{IG4} follows from \eqref{IG2}.
\end{proof}

\subsection*{\boldmath A presentation for the maximal subgroup of $IG(E)$ containing $W$}

For the remainder of the article $E$ will denote the set of idempotents of the full linear monoid $M_n(Q)$. Our
interest is in the maximal subgroup $H(W,IG(E))$ where $IG(E)$ is defined by the presentation \eqref{eq1},
and our first task will be to write down a presentation for this group.
The key concept that we need in order to write down such a presentation is the notion of a singular square, a concept which originally goes back to work of Nambooripad \cite{nambooripad79}.

Let $S$ be a semigroup with set of idempotents $E = E(S)$.
An \emph{$E$-square} is a sequence $(e,f,g,h)$ of elements of $E$ with $e \; \gr \; f \; \gl \; g \; \gr \; h \; \gl \; e$. Unless otherwise stated, we shall assume that all $E$-squares are non-degenerate, i.e. the elements $e,f,g,h$ are all distinct. An idempotent $t = t^2 \in E$ \emph{left to right singularises} the $E$-square $(e,f,g,h)$ if
\[
te = e, \; th=h, \; et=f \; \mbox{and} \; ht=g.
\]
Right to left, top to bottom and bottom to top singularisation is defined similarly and we call the $E$-square \emph{singular} if it has a singularising idempotent of one of these types.

A biordered set $E$ is called regular if $E=E(S)$ where $S$ is a regular semigroup.
In particular, $E(M_n(Q))$ is a regular biordered set since the semigroup $M_n(Q)$ is regular.
Recall from \cite{Brittenham2009} that the {\it Graham-Houghton graph} of a
(regular) biordered set $E$ is the bipartite graph  with vertices
the disjoint union of the set of $\gr$-classes of $E$ and the set
of $\gl$-classes of $E$, and with a directed (positively oriented)
edge from an $\gl$-class $L$ to an $\gr$-class $R$ if there is an
idempotent $e \in L \cap R$ (and a corresponding inverse edge from
$R$ to $L$ in this case). One then adds 2-cells to this graph, one
for each singular square $(e,f,g,h)$. Given this square we sew a
2-cell onto this graph with boundary $ef^{-1}gh^{-1}$. The
resulting 2-complex is called the {\it Graham-Houghton complex} of $E$ and we
denote it by $GH(E)$.

The following theorem in \cite{Brittenham2009} is based on the work of
Nambooripad, and is the principal tool used in \cite{Brittenham2009} to
construct maximal subgroups of free idempotent-generated
semigroups on biordered sets.

\begin{thm} \ \cite{Brittenham2009}
\label{thm_fundgroup}
Let $E$ be a regular biordered set.
Then the maximal subgroup of $IG(E)$ containing $e \in E$ is
isomorphic to the fundamental group $\pi_{1}(GH (E), L_{e})$ of
the Graham-Houghton complex of $E$ based at the $\gl$-class $L_{e}$ of $e$.
\end{thm}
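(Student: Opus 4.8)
The plan is to realise both sides as the fundamental group of one and the same presentation $2$-complex: van Kampen's theorem handles the topological side, while a Reidemeister--Schreier rewriting of the presentation \eqref{eq1} handles the algebraic side. First I would record the basic dictionary between the combinatorics of the $\gd$-class $D$ of $e$ and the graph $GH(E)$. Since each group $\gh$-class contains a unique idempotent, the positively oriented edges of the Graham--Houghton graph (the $1$-skeleton of $GH(E)$) running from an $\gl$-class $L$ to an $\gr$-class $R$ are in bijection with the idempotents of $D$, an edge corresponding to the unique idempotent $f \in L \cap R$. By Proposition~\ref{prop_basic}\eqref{IG3} the $\gr$- and $\gl$-classes of $e$ inside $IG(E)$ correspond bijectively, via $\phi$, to those of $e$ in $S'$, so the subgraph attached to the $\gd$-class of $e$ is literally the same object whether computed in $IG(E)$ or in $S'$, and its vertex set is the bipartite set named in the statement.

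Next I would build the homomorphism. Because the graph is bipartite, a reduced edge-path based at $L_e$ alternates as $L_e = L_0 \to R_1 \to L_1 \to \cdots \to L_0$, and reading off the associated product of idempotents (using the inverse edge, hence the inverse translation, at each negatively oriented traversal) yields, by repeated application of Green's Lemma, a well-defined \emph{translation} of $\gh$-classes along the path. A \emph{closed} path at $L_e$ returns to the $\gh$-class of $e$ and therefore determines an element of the maximal subgroup $H(e, IG(E))$. Checking that concatenation maps to multiplication and that a backtracking pair $ff^{-1}$ induces the identity translation shows that this descends to a homomorphism $\Psi$ from the (free) fundamental group of the $1$-skeleton onto $H(e, IG(E))$; surjectivity rests on the fact that $H(e,IG(E))$ is the Schützenberger group of its $\gh$-class and is generated by the elementary translations coming from the idempotents of $D$, each realised by a single edge.

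It then remains to identify $\ker \Psi$ with the normal closure of the boundary words $ef^{-1}gh^{-1}$ of the singular squares, for then van Kampen's theorem --- which presents $\pi_1(GH(E), L_e)$ by the edges of the $1$-skeleton modulo these $2$-cell boundaries --- finishes the proof. One inclusion is routine: a left-to-right singularising idempotent $t$ for $(e,f,g,h)$ satisfies $te = e$, $ht = g$, etc., and since $te = e \in \{t,e\}$ the elementary relations of \eqref{eq1} give $e = te$, $f = et$, $g = ht$, $h = th$ in $IG(E)$, which together force $ef^{-1}gh^{-1}$ into $\ker\Psi$; hence every singular-square relator is a genuine relation. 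For the reverse inclusion I would run a Reidemeister--Schreier analysis of \eqref{eq1} relative to a Schreier transversal read off a spanning tree of the graph, in the style of \cite{Ruskuc1999, grayta}: the rewritten relations split into those that merely re-coordinatise (absorbed by the tree and so trivial in $\pi_1$) and those equating the two sides of an elementary idempotent product $xy = z$ with $x,y,z \in D$, and the content is Nambooripad's observation that the nondegenerate instances of the latter are exactly the singularisations of $E$-squares.

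The main obstacle is precisely this completeness step: showing that no relations beyond the singular-square boundaries survive. Concretely, one must prove that whenever an elementary relation of \eqref{eq1} keeps a word inside $D$ yet alters its coordinate in the maximal subgroup, that change is realised by some singular square, and conversely that degenerate basic products contribute nothing new to $\pi_1$. This is where the structure theory of regular biordered sets --- Nambooripad's axioms governing the sandwich sets and his classification of singularisable $E$-squares \cite{nambooripad79} --- is indispensable, and it is exactly what singles out the Graham--Houghton $2$-cells as the right, and the only, $2$-cells to attach.
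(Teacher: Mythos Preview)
The paper does not give its own proof of this theorem: it is stated with a citation to \cite{Brittenham2009} and used as a black box. There is therefore nothing in the present paper to compare your argument against.

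That said, your outline is a reasonable sketch of how such a result is established, and it essentially interpolates between the two proofs available in the literature. The original argument in \cite{Brittenham2009} is purely topological, building on Nambooripad's groupoid-theoretic description of $IG(E)$ from \cite{nambooripad79} and realising the maximal subgroup directly as a fundamental group; the alternative in \cite{grayta} is the Reidemeister--Schreier rewriting you invoke for the reverse inclusion, and it yields the same presentation (and in fact works for non-regular $E$ as well). Your proposal mixes the two viewpoints, which is fine, but be aware that the ``completeness step'' you flag as the main obstacle is genuinely the entire content of the theorem: verifying that every rewritten relation either collapses under the spanning tree or comes from a singular square is a substantial case analysis, and your sketch does not indicate how it would actually be carried out beyond appealing to Nambooripad's axioms. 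If you intend this as more than a plausibility argument, that step needs to be made explicit.
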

The above result shows that the maximal subgroups of $IG(E)$ are determined by relations given by singular squares, when $E$ is a regular biordered set. In fact, the same is true in the case of arbitrary (non-regular) biordered sets, as is shown in \cite{grayta} using Reidemeister--Schreier rewriting methods (see \cite{Magnus1, Ruskuc1999}).

What remains of this section will be dedicated to using Theorem~\ref{thm_fundgroup} to write down a presentation for the group $H(W,IG(E))$. The key task for writing down a presentation for the fundamental group of the connected component of $W$ in this 2-complex is to to make a good choice of spanning tree for the underlying 1-skeleton and so this is what we shall turn our attention to now.

We begin by associating a certain bipartite graph with the Rees structure matrix $P_r$, and then shall define the spanning tree of the connected component of $W$ in the 1-skeleton of the Graham--Houghton complex using a certain
subtree of this bipartite graph.

\begin{defn}\label{def_Delta}
Let $P_{r} = (P_r(Y,X))$ where $Y \in \mathcal{Y}_{r}$, $X\in \mathcal{X}_r$ and $P_r(Y,X) = YX$ if $YX$ is of rank
$r$, and $0$ otherwise. Let $\Delta(P_r)$ denote the bipartite graph with vertex set $\X_r \cup \Y_r$ where $(X,Y)$ is
an edge if and only if $P_r(Y,X)=YX=I_r$.
\end{defn}

Note that there are fewer edges in the graph $\Delta(P_r)$ than there are idempotents in $D_r$, that is, the edges in
this graph just pick out a subset of the idempotents.

The graph $\Delta(P_r)$ is the subgraph of the connected component of $W$ in the 1-skeleton of the Graham--Houghton complex $GH(E)$, with the same vertex set, and edge set corresponding to the positions of the entries $I_r$ in the structure matrix $P_r$.
The use of such bipartite graphs as an approach to the
study of products of elements in Rees matrix semigroups is widespread, see for example \cite{Gr, GGR, GraRuskGraphs,
Hough, HowieGraphs}.

Given $A_1, \ldots, A_k$, a collection of pairwise disjoint subsets of $\{1,\ldots, n\}$, we let $I(A_1|\cdots|A_k)$ denote the $(k \times n)$ matrix with $1$ in positions $(j,a_j)$ $(a_j \in A_j)$ for $1 \leq j \leq k$, and every other entry equal to $0$.
In particular, given a subset $\{i_1, \ldots, i_r \}$ of $\{1, \ldots, n\}$ with $i_1 < \cdots < i_r$, we use $I(i_1|\cdots|i_r)$ to denote the $r \times n$ matrix with $1$ in positions $(j,i_j)$ for $1 \leq j \leq r$, and every other entry equal to $0$. So the $i_1$ to $i_r$th columns of $I(i_1|\cdots|i_r)$ together form a copy of the $r \times r$ identity matrix, and the other columns are all zero vectors.
We call $I(i_1| \cdots | i_r)$ a scattered identity matrix.
Given $m \leq n$ and $\{i_1, \ldots, i_r\} \subseteq \{1,\ldots, m\}$ we use $I_{r \times m}(i_1|\cdots|i_r)$ to denote the $r \times m$ scattered identity matrix with $1$ in positions $(j,i_j)$ for $1 \leq j \leq r$, and every other entry equal to $0$.

Unless otherwise stated, throughout given an $r$-element subset $\{i_1, \ldots, i_r \}$ of $\{1, \ldots, n\}$ we shall adopt the convention that the elements are ordered so that $i_1 < \cdots < i_r$.

We group the vertices of $\Delta(P_r)$ together depending on their leading rows or columns. Given $X \in \X_r$ with
$LR(X) = \{i_1, \ldots, i_r \}$ where $i_1 < \cdots < i_r$ we shall say that $X$ belongs to the region $(i_1 < \cdots <
i_r)$. Similarly, given $Y \in \Y_r$ with $LC(Y) = \{i_1, \ldots, i_r \}$ where $i_1 < \cdots < i_r$ we shall say that
$Y$ belongs to the region $(i_1 < \cdots < i_r)$. Moreover, we let
\[
(i_1 < \cdots < i_r) \times (j_1 < \cdots < j_r)
\]
denote the subgraph of $\Delta(P_r)$ induced by the set of all vertices $X \in \mathcal{X}_r$ belonging to the region
$(i_1 < \cdots < i_r)$ together with all vertices $Y \in \mathcal{Y}_r$ belonging to the region $(j_1 < \cdots < j_r)$.
In particular, we let
\[
\Delta(i_1 < \cdots < i_r) = (i_1 < \cdots < i_r) \times (i_1 < \cdots < i_r),
\]
and call these the diagonal regions.

\begin{sloppypar}
We define a natural order $\preceq$ on the set of $r$-element subsets of $\{1,\ldots, n\}$ where $\{1,\ldots, r\}
\preceq A$ for every $r$-element subset of $\{1,\ldots, n\}$, and given $A = \{a_1, \ldots, a_r \} \neq \{1,\ldots, r
\}$ with $a_1 < a_2 < \cdots < a_r$ we set
\begin{align*}
\{ a_1, \ldots, a_{m-1}, a_{m} - 1, a_{m+1}, \ldots, a_r \}
\preceq
\{ a_1, \ldots, a_{m-1}, a_{m}, a_{m+1}, \ldots, a_r \}
\end{align*}
where $m \in \{1,\ldots, r\}$ is the smallest subscript such that $a_m \neq m$, and then we take the reflexive
transitive closure to obtain the relation  $\preceq$. Clearly this defines a partial order on the $r$-element subsets of
$\{1,\ldots, n\}$ and this order has a unique minimal element $\{1,\ldots,r\}$ which lies below every other element of
the poset. This order clearly induces an order on the diagonal regions of the bipartite graph $\Delta(P_r)$. Note that the poset of $r$-element subsets of $\{1,\ldots, n\}$ under the $\preceq$-relation has the property that
every non-minimal element $p$ of the poset covers exactly one other element. That is, for every non-minimal $p$ there is precisely one element $q$ of the poset such that $q \prec p$ and there is no element $z$ satisfying $q \prec z \prec p$.
In particular, the Hasse diagram of such a poset is a tree; see Figure~\ref{fig_order} for an illustration of this when $n=6$ and $r=2$.
\end{sloppypar}

\begin{figure}[t]
\begin{center}
\begin{tikzpicture}[scale=0.3, yscale=-1]
\tikzstyle{every node} = [circle, fill=gray!30]
\node (12) at (3,19) {\small 12};
\node (13) at (3,16) {\small 13};
\node (14) at (3,13) {\small 14};
\node (15) at (3,10) {\small 15};
\node (16) at (3,7) {\small 16};
\node (23) at (7,15) {\small 23};
\node (24) at (7,12) {\small 24};
\node (25) at (7,9) {\small 25};
\node (26) at (7,6) {\small 26};
\node (34) at (11,11) {\small 34};
\node (35) at (11,8) {\small 35};
\node (36) at (11,5) {\small 36};
\node (45) at (15,7) {\small 45};
\node (46) at (15,4) {\small 46};
\node (56) at (19,3) {\small 56};
\thickedgedraw{12}{13};
\thickedgedraw{13}{14};
\edgedraw{14}{15};
\edgedraw{15}{16};
\thickedgedraw{13}{23};
\edgedraw{14}{24};
\edgedraw{15}{25};
\edgedraw{16}{26};
\edgedraw{24}{34};
\edgedraw{25}{35};
\edgedraw{26}{36};
\edgedraw{35}{45};
\edgedraw{36}{46};
\edgedraw{46}{56};
\end{tikzpicture}
\end{center}

\caption{An example of the $\preceq$ ordering for $n=6$, $r=2$. The bold edges correspond to the regions of the graph $\Delta(P_r)$ drawn in Figure~\ref{fig_regions}.}\label{fig_order}

\end{figure}

\begin{figure}[t]
\begin{center}
\scalebox{1.0}
{
\begin{tikzpicture}
[scale=0.3,
blackbox/.style={draw, color=gray!30, fill=gray!30, rectangle, minimum height=8.8mm, minimum width=28mm},
dottedbox/.style={draw, rectangle, loosely dashed, minimum height=3.5cm, minimum width=8cm}]
\tikzstyle{vertex}=[circle,draw=black, fill=black, inner sep = 0.3mm]
\node at (6,11) [blackbox] {};
\node at (16,11) [blackbox] {};
\node at (26,11) [blackbox] {};
\node at (36,11) [blackbox] {};
\node at (6,3) [blackbox] {};
\node at (16,3) [blackbox] {};
\node at (26,3) [blackbox] {};
\node at (36,3) [blackbox] {};
\node (a1)  [vertex,label={90:{\tiny $I(1|2)$ \; \;}}] at (3,11) {};
\node (a2)  [vertex,label={90:{\tiny \; \; $I(1|2,3)$}}] at (5,11) {};
\node (a3)  [vertex,label={180:{}}] at (7,11) {};
\node (a4)  [vertex,label={180:{}}] at (9,11) {};
\node (b1)  [vertex,label={90:{\tiny $I(1|3)$ \; \;}}] at (13,11) {};
\node (b2)  [vertex,label={90:{\tiny \; \; $I(1|3,4)$}}] at (15,11) {};
\node (b3)  [vertex,label={180:{}}] at (17,11) {};
\node (b4)  [vertex,label={90:{\tiny $I(1,2|3)$}}] at (19,11) {};
\node (c1)  [vertex,label={90:{\tiny $I(1|4)$}}] at (23,11) {};
\node (c2)  [vertex,label={180:{}}] at (25,11) {};
\node (c3)  [vertex,label={180:{}}] at (27,11) {};
\node (c4)  [vertex,label={180:{}}] at (29,11) {};
\node (d1)  [vertex,label={90:{\tiny $I(2|3)$}}] at (33,11) {};
\node (d2)  [vertex,label={180:{}}] at (35,11) {};
\node (d3)  [vertex,label={90:{\tiny $$}}] at (37,11) {};
\node (d4)  [vertex,label={180:{}}] at (39,11) {};
\node (a1')  [vertex,label={270:{\tiny \; $I(1|2)^T$}}] at (3,3) {};
\node (a2')  [vertex,label={180:{}}] at (5,3) {};
\node (a3')  [vertex,label={180:{}}] at (7,3) {};
\node (a4')  [vertex,label={180:{}}] at (9,3) {};
\node (b1')  [vertex,label={270:{\tiny \; $I(1|3)^T$}}] at (13,3) {};
\node (b2')  [vertex,label={180:{}}] at (15,3) {};
\node (b3')  [vertex,label={180:{}}] at (17,3) {};
\node (b4')  [vertex,label={180:{}}] at (19,3) {};
\node (c1')  [vertex,label={270:{\tiny \; $I(1|4)^T$}}] at (23,3) {};
\node (c2')  [vertex,label={180:{}}] at (25,3) {};
\node (c3')  [vertex,label={180:{}}] at (27,3) {};
\node (c4')  [vertex,label={180:{}}] at (29,3) {};
\node (d1')  [vertex,label={270:{\tiny \; $I(2|3)^T$}}] at (33,3) {};
\node (d2')  [vertex,label={180:{}}] at (35,3) {};
\node (d3')  [vertex,label={180:{}}] at (37,3) {};
\node (d4')  [vertex,label={180:{}}] at (39,3) {};
\edgedraw{a1'}{a1};
\edgedraw{a1'}{a2};
\edgedraw{a1'}{a3};
\edgedraw{a1'}{a4};
\edgedraw{b1'}{b1};
\edgedraw{b1'}{b2};
\edgedraw{b1'}{b3};
\edgedraw{b1'}{b4};
\edgedraw{c1'}{c1};
\edgedraw{c1'}{c2};
\edgedraw{c1'}{c3};
\edgedraw{c1'}{c4};
\edgedraw{d1'}{d1};
\edgedraw{d1'}{d2};
\edgedraw{d1'}{d3};
\edgedraw{d1'}{d4};
\dashedgedraw{a1'}{a1};
\dashedgedraw{a2'}{a1};
\dashedgedraw{a3'}{a1};
\dashedgedraw{a4'}{a1};
\dashedgedraw{b1'}{b1};
\dashedgedraw{b2'}{b1};
\dashedgedraw{b3'}{b1};
\dashedgedraw{b4'}{b1};
\dashedgedraw{c1'}{c1};
\dashedgedraw{c2'}{c1};
\dashedgedraw{c3'}{c1};
\dashedgedraw{c4'}{c1};
\dashedgedraw{d1'}{d1};
\dashedgedraw{d2'}{d1};
\dashedgedraw{d3'}{d1};
\dashedgedraw{d4'}{d1};
\thickedgedraw{b1'}{a2};
\thickedgedraw{c1'}{b2};
\thickedgedraw{d1'}{b4};
\node (dots) at (42,11) {$\ldots$};
\node (dots2) at (42,3) {$\ldots$};
\node (12) at (6,14) {\tiny $(1 < 2)$};
\node (13) at (16,14) {\tiny $(1 < 3)$};
\node (14) at (26,14) {\tiny $(1 < 4)$};
\node (23) at (36,14) {\tiny $(2 < 3)$};
\node (12) at (6,0) {\tiny $(1 < 2)$};
\node (13) at (16,0) {\tiny $(1 < 3)$};
\node (14) at (26,0) {\tiny $(1 < 4)$};
\node (23) at (36,0) {\tiny $(2 < 3)$};
\end{tikzpicture}
}
\end{center}
\caption{A partial view of the graph $\Delta(P_r)$, where $n=6$ and $r=2$, with edges from the spanning tree $T_{n,r}$ indicated.
The bold lines represent edges of type (T3); the dashed lines those of type (T2); while the remaining edges are those of type (T1). Note that the edges $(I(i|j),I(i|j)^T)$ are both of type (T1) and (T2).
When the spanning tree is quotiented out by the diagonal regions we obtain a graph that is isomorphic to the Hasse graph illustrated in Figure~\ref{fig_order}. In particular, the three bold edges between regions in this figure correspond in the obvious natural way to the three bold edges in Figure~\ref{fig_order}.
}\label{fig_regions}
\end{figure}

Let $T_{n,r}$ be the subgraph of $\Delta(P_r)$ spanned by the edges:
\begin{enumerate}[(T1)]
\item $(I(i_1|\cdots|i_r)^T, Y)$ where $Y \in \Y_r$ belongs to the region $(i_1 < \cdots < i_r)$;
\item $(X, I(i_1|\cdots|i_r))$ where $X \in \X_r$ belongs to the region $(i_1 < \cdots < i_r)$; and
\item $(I(i_1|\cdots|i_r)^T, I(i_1|\cdots|i_{j-1}|i_j-1, i_j|i_{j+1}|\cdots|i_r))$ where $i_1 = 1$, $i_2 = 2,$ $\ldots,$ $i_{j-1} = j-1$ but $i_j \neq j$,
\end{enumerate}
where $\{i_1, \ldots, i_r \}$ ranges through all $r$-element subsets of $\{1,\ldots, n\}$.

If one just takes the edges (T1) and (T2) one obtains a bipartite graph whose connected components are connected subgraphs of the regions
$\Delta(i_1 < \cdots < i_r)$. The remaining edges (T3) give exactly one edge connecting every pair of regions that are
adjacent under the $\preceq$ order. In particular this means that the graph obtained by factoring out the spanning tree by the
equivalence relation given by the diagonal regions is isomorphic to the Hasse diagram of the poset of $r$-element subsets of
$\{1,\ldots, n\}$ under $\preceq$.

Each of (T1), (T2) and (T3) is easily seen to define a subset of the edges of $\Delta(P_r)$. Moreover, using the observations made above it
is easily verified that $T_{n,r}$ is a spanning tree for the graph $\Delta(P_r)$.
An illustration of the spanning tree $T_{n,r} = T_{6,2}$ in the graph $\Delta(P_r) = \Delta(P_2)$ is given in Figure~\ref{fig_regions}.
In fact, not only is $T_{n,r}$ a spanning tree of $\Delta(P_r)$, but it is also easily seen to be a spanning tree of the 1-skeleton of the connected component of $W$ in the Graham--Houghton complex $GH(E)$. We use this fact below where we write down a presentation for the group $H(W,IG(E))$.

In light of Theorem~\ref{thm_fundgroup} we would now like to characterise the singular squares in the $\gd$-class $D_r$.
It is easy to show in general that if $(e,f,g,h)$ is a singular square then $\{e,f,g,h\}$ forms a $2 \times 2$ rectangular band. In other words, for a square to stand a chance of being singular it must be a rectangular band. In \cite{margolismeakinip} it is shown that in the full linear semigroup the converse is also true.
\begin{thm} \ \cite[Theorem~4.3]{margolismeakinip} \label{rectsing}
Every non-trivial rectangular band in $M_{n}(\F)$ is a singular square.
\end{thm}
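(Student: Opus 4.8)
The plan is to recast the statement in linear-algebraic terms and then exhibit an explicit singularising idempotent. Write the $E$-square as $(e,f,g,h)$ with $e\,\gr\,f\,\gl\,g\,\gr\,h\,\gl\,e$. Each rank-$r$ idempotent $A\in M_n(\F)$ is the projection of $Q^n$ onto its column space $\Coll A$ along $\ker A$, where $\ker A$ is determined by $\Roww A$. Using the descriptions of $\gr$ and $\gl$ in \eqref{eqn_R} and \eqref{eqn_L}, the four idempotents share column spaces and kernels in a rectangular pattern: $e=(C_1,K_1)$, $f=(C_1,K_2)$, $g=(C_2,K_2)$, $h=(C_2,K_1)$, where $C_1=\Coll e=\Coll f$, $C_2=\Coll g=\Coll h$, $K_1=\ker e=\ker h$ and $K_2=\ker f=\ker g$. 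Since each of the four index pairs is realised by one of the idempotents, all the transversality relations $C_i\oplus K_j=Q^n$ hold automatically.

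Next I would reduce the goal to a concrete existence problem. I aim to produce an idempotent $t$ that left-to-right singularises the square, i.e. $te=e$, $th=h$, $et=f$, $ht=g$. Reading these as equalities of linear maps on column vectors, $te=e$ and $th=h$ say exactly that $t$ fixes $C_1+C_2$ pointwise; and, granting that $t$ fixes $C_1$ and $C_2$, both $et=f$ and $ht=g$ reduce, after checking on the complementary summands $C_1\oplus K_2=Q^n$ and $C_2\oplus K_2=Q^n$, to the single condition $t(K_2)\subseteq K_1$. Thus it suffices to build an idempotent $t$ whose image contains $C_1+C_2$, which fixes that subspace, and which maps $K_2$ into $K_1$.

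The crux is to extract from the hypothesis the geometric fact that makes such a $t$ exist, and I expect this to be the main obstacle; everything else is formal. I would use the rectangular-band relation $eg=f$: comparing the two projections $e$ and $f$ on $C_2$ shows that for $v\in C_2$ one has $v-e(v)\in K_1\cap K_2$, so that $C_2\subseteq C_1\oplus(K_1\cap K_2)$, and hence $V:=C_1+C_2\subseteq C_1\oplus(K_1\cap K_2)$. From this, any $x\in V\cap K_2$ decomposes as $x=c+k$ with $c\in C_1$ and $k\in K_1\cap K_2$, forcing $c=x-k\in C_1\cap K_2=0$; therefore $V\cap K_2\subseteq K_1$. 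This containment is precisely the compatibility needed below, and isolating it is the heart of the matter.

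Finally I would assemble $t$. Because $C_1\oplus K_2=Q^n$ we have $V+K_2=Q^n$, so choosing a complement $N$ of $V\cap K_2$ inside $K_2$ gives $K_2=(V\cap K_2)\oplus N$ with $\dim N=n-\dim V$ and $V\cap N=0$, whence $Q^n=V\oplus N$. Let $t$ be the projection onto $V$ along $N$; it is idempotent and fixes $V\supseteq C_1+C_2$, giving $te=e$ and $th=h$. For $v\in K_2$, writing $v=a+n$ with $a\in V\cap K_2$ and $n\in N$ yields $t(v)=a\in V\cap K_2\subseteq K_1$, so $t(K_2)\subseteq K_1$ and hence $et=f$ and $ht=g$. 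Thus $t$ singularises the square and the band is singular. I expect the argument to be uniform, requiring no case division on the mutual position of the four subspaces, and to go through verbatim over a division ring since only direct-sum decompositions and ranks are used.
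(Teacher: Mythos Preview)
Your argument is correct. The linear-algebraic reformulation is sound: rank-$r$ idempotents in $M_n(\F)$ are precisely the projections onto an $r$-dimensional column space along a complementary kernel, and the conditions $te=e$, $th=h$, $et=f$, $ht=g$ do reduce exactly to ``$t$ fixes $C_1+C_2$ pointwise and $t(K_2)\subseteq K_1$''. Your use of the rectangular-band identity $eg=f$ to obtain $C_2\subseteq C_1\oplus(K_1\cap K_2)$, and hence $V\cap K_2\subseteq K_1$, is the essential step and is carried out cleanly. The construction of $t$ as the projection onto $V$ along a complement $N$ of $V\cap K_2$ in $K_2$ then works, and everything you use (existence of complements, rank, direct-sum decompositions) is available over an arbitrary division ring.

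As for comparison: this paper does not supply its own proof of Theorem~\ref{rectsing}; the result is quoted from \cite[Theorem~4.3]{margolismeakinip} and used as a black box. So there is no in-paper argument to compare against. Your proof stands on its own as a direct, self-contained verification that is well adapted to the linear-algebraic setting of $M_n(\F)$.
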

Our interest is in the set of singular squares in the $\gd$-class $D_r$. Such a singular square may be given either by listing the four idempotents that make up that square, or by giving a quadruple $(X,X',Y,Y') \in \X_r \times \X_r \times \Y_r \times \Y_r$ which specifies the coordinates of a singular square $(e_{X,Y}, e_{X,Y'}, e_{X',Y'}, e_{X',Y})$ of idempotents. Since it will always be clear from context what we mean, we shall also call such quadruples $(X,X',Y,Y')$ singular squares.

Let $\Sigma \subseteq \X_r \times \X_r \times \Y_r \times \Y_r$ be the set of all singular squares of $D_r$, which by the above result correspond precisely the set of rectangular bands in $D_r$. In terms of the structure matrix $P_r$ it is easily verified that $(X,X',Y,Y')$ corresponds to a rectangular band if and only if the equality
\begin{equation}
P_r(Y,X) P_r(Y',X)^{-1} = P_r(Y,X') P_r(Y',X')^{-1}
\label{eq_99}
\end{equation}
holds in the group $GL_r(Q)$. (Actually this is a general fact describing $2 \times 2$ rectangular bands in Rees matrix
semigroups.)

With the above notation, it now follows from the definition of the Graham--Houghton complex along with Theorem~\ref{thm_fundgroup} that the maximal subgroup $H(W,IG(E))$ is defined
by the presentation with generators
\begin{align}
\label{eqn_generators}
\mathcal{F}=\{ f_{X,Y} : X\in \X_r,\ Y\in \Y_r,\ P_r(Y,X) \neq 0 \},
\end{align}
and defining relations
\begin{alignat}{2}
\label{eqn_middle}
& f_{X,Y}= 1 &  & \quad (X,Y) \in T_{n,r},  \\
\label{eqn_bottom}
& f_{X,Y}^{-1}f_{X,Y'}=f_{X',Y}^{-1}f_{X',Y'}   & & \quad ((X,X',Y,Y')\in\Sigma).
\end{alignat}
Let us denote this presentation by $\mathcal{P}_{r,n}$.

The rest of the paper will be devoted to the proof that when $r < n/3$ this presentation actually defines the general linear group $GL_r(Q)$.

\section{Outline of the Proof}
\label{sec_outline}

The basic idea behind the proof is as follows. We consider two matrices both with rows indexed by $\X_r$ and columns
indexed by $\Y_r$. The first matrix is the transpose $P_r^T$ of the Rees structure matrix of $D_r^0$ defined in
Theorem~\ref{thm_rees}. The second is the $\X_r \times \Y_r$ matrix with non-zero entries the abstract generators
$f_{X,Y}$ from the presentation $\mathcal{P}_{r,n}$.
So, we view the set of generators $\mathcal{F}$ given in \eqref{eqn_generators} as being arranged in a matrix in a natural way
where the entry indexed by the pair $(X,Y)$ is equal to the generator $f_{X,Y}$ and all the other entries are set to
$0$. We shall carry out a sequence of Tietze transformations to the presentation $\mathcal{P}_{r,n}$ transforming it
into a presentation for the general linear group $GL_r(Q)$. One of the key ideas is that we imagine the two $\X_r
\times \Y_r$ matrices above laid out side-by-side, and then the Rees structure matrix $P_r^T$ acts as a ``guide''
pointing out relations between the generators $f_{X,Y}$ one should be aiming to show hold. The fact that the structure
of $P_r$ influences the relations we obtain should not come as a surprise since, firstly,
the spanning tree $T_{n,r}$ has been defined in terms of $P_r$, which links entries in $P_r$ with the relations
\eqref{eqn_middle}, and secondly, because all the rectangular bands are singular, the relations \eqref{eqn_bottom} in
the presentation correspond exactly to the singular squares which are seen inside $P_r$.

The proof breaks down into the following three main steps.

\

\noindent Stage~1: \emph{Generators $f_{X,Y}$ such that $YX=I_r$.}

\

\noindent In Section~\ref{sec_Ilabels} we prove that for every such generator the relation $f_{X,Y}=1$ is a consequence of the relations in the presentation $\mathcal{P}_{r,n}$ (see Lemma \ref{lem_hash}). This is done by beginning with the relations \eqref{eqn_middle} which tell us that the result holds for every generator $f_{X,Y}$ where the corresponding edge $(X,Y)$ belongs to the spanning tree $T_{n,r}$ and then
extending this, using the relations \eqref{eqn_bottom}, to arbitrary edges from the bipartite graph $\Delta(P_r)$.

\

\noindent Stage~2: \emph{Pairs of generators $f_{A,B}$ and $f_{X,Y}$ such that $BA = YX$.}

\

\noindent In Sections~\ref{sec_multtables} and \ref{sec_lambdalabels} we prove that for every such pair, the relation $f_{A,B} = f_{X,Y}$ is a consequence of the relations in
the presentation $\mathcal{P}_{r,n}$ (see Lemmas \ref{lem_topleft} and \ref{lem_fulltable}). This is achieved in the following way. We fix some element $K \in GL_r(Q)$ and consider all the generators $f_{A,B}$ such that $BA=K$.
Given such a pair of generators $f_{A,B}$, $f_{A',B'}$ we say that they are strongly connected if (1) they are in the same row or column (i.e. $A=A'$ or $B=B'$) and (2) the pair $f_{A,B}$, $f_{A',B'}$ completes to a singular square such that the other pair $f_i$, $f_j$ of the square are both known to satisfy $f_i=f_j=1$ as a consequence of Stage 1. Then a sequence of generators $f_{A,B}$ all satisfying $BA=K$, and such that adjacent terms in the sequence are strongly connected, is called a strong path. In this language, in this step of the proof we prove that for every $K \in GL_r(Q)$, and for every pair $f_{A,B}$, $f_{A',B'}$, if $BA=B'A'=K$ then there is a strong path from $f_{A,B}$ to $f_{A',B'}$. Keeping in mind the relations \eqref{eqn_bottom}, this will suffice to
show that $f_{A,B} = f_{A',B'}$ is a consequence of the relations from the presentation $\mathcal{P}_{r,n}$.

\

\noindent Stage~3: \emph{Defining relations for $GL_r(Q)$}.

\

\noindent By this stage we have transformed $\mathcal{P}_{r,n}$ into a presentation whose generators are in natural one to one
correspondence with the elements of $GL_r(Q)$. Using this correspondence, we denote the generating symbols in this new
presentation by $f_{A}$ where $A \in GL_r(Q)$.
As a consequence of \eqref{eq_99} and the relations \eqref{eqn_middle}, \eqref{eqn_bottom} it follows that the map which sends $f_A$ to $A^{-1} \in GL_r(Q)$ extends to define a well-defined homomorphism from $H(W,IG(E))$ onto $GL_r(Q)$, and this homomorphism maps the generators $f_A$ bijectively to $GL_r(Q)$.
The fact that $f_A$ is mapped to $A^{-1}$ rather than $A$ here reflects the relationship between \eqref{eq_99} and \eqref{eqn_bottom}.
In Section~\ref{sec_uncovering}, we show that for
any pair of matrices $A$, $B$ from $GL_r(Q)$ the relation $f_B f_A = f_{AB}$ belongs to \eqref{eqn_middle} (see Lemma
\ref{lem_multiplicationtable}) and it follows that every word over the generators $f_A$ is actually equal to one of the generators.
Again the form of the relation $f_B f_A = f_{AB}$ comes from the fact that $f_A$ corresponds to $A^{-1}$ and the fact that $B^{-1} A^{-1} = (AB)^{-1}$ in $GL_r(Q)$.
We may then conclude that the \emph{elements} of $H(W,IG(E))$ are in bijective correspondence with the generators $f_A$ and
thus
that the homomorphism from $H(W,IG(E))$ onto $GL_r(Q)$ described above is in fact an isomorphism.

\

Let us conclude this section by making a comment about our method of proof.
We recall that, in general, when writing down a Rees matrix representation for a completely $0$-simple semigroup the structure matrix $P$ is in no sense unique; see \cite[Theorem~3.4.1]{howie95}. Given an arbitrary completely $0$-simple semigroup, and some Rees matrix representation for it, the graph $\Delta(P)$ need not be connected (in fact it will not contain any edges at all if the structure matrix does not contain any $1$s) even if the semigroup is idempotent generated.
On the other hand, it is always possible to normalise the matrix putting it into a particular form, introduced in
\cite{Gr} and later utilised in \cite{GGR}, called Graham normal form. When the corresponding completely $0$-simple
semigroup is idempotent generated, if the structure matrix $P$ is in Graham normal form, then $\Delta(P)$ will be
connected. It just so happens that the Rees matrix representation we work with in this paper is in Graham normal form,
and the decision to work with this particular Rees matrix representation is an important part of the proof, since we
need the graph $\Delta(P_r)$ to be connected in order to find the spanning tree $T_{n,r}$ which is the starting point
of the proof of the main theorem. This suggests that putting the structure matrix into Graham normal form would be a
sensible first step when investigating maximal subgroups of free idempotent generated semigroups in general.

\section{Generators $f_{X,Y}$ such that $YX=I_r$}
\label{sec_Ilabels}

In this section we work through Stage~1 of the proof of the main theorem. We continue using the notation and
definitions introduced in Section~\ref{sec_preliminaries}. So in particular, $P_r$ denotes the structure matrix for the
Rees matrix representation for the principal factor $D_r^0$ of $M_n(Q)$, $\Delta(P_r)$ is the bipartite graph defined
in Definition~\ref{def_Delta} whose edges correspond to occurrences of $I_r$ in $P_r$, and $T_{n,r}$ is the spanning tree
of $\Delta(P_r)$ spanned by the edges (T1)--(T3). Recall that our aim is to show that for every edge $(X,Y)$ of
$\Delta(P_r)$ the relation $f_{X,Y}=1$ is a consequence of the presentation $\mathcal{P}_{r,n}$.
Even though in the statement of the main result Theorem~\ref{thm_main} we insist that $r < n/3$, all of the results in this section will be proved under the weaker assumption that $1 \leq r < n-1$ and this assumption about the relationship between $n$ and $r$ will remain in place throughout the section.

From the relations \eqref{eqn_middle}
we
already know $f_{X,Y}=1$ for every edge $(X,Y)$ in the spanning tree $T_{n,r}$. With this initial information, together
with the relations \eqref{eqn_bottom} from the presentation, we shall complete the proof of Stage~1 by proving the
following result.

\begin{lem}\label{lem_hash}
Let $n$ and $r$ be positive integers with $1 \leq r < n-1$, and
let $Y\in\Y_r$, $X\in\X_r$ such that $YX=I_r$. Then the relation $f_{X,Y}=1$ is a consequence of the relations
\eqref{eqn_middle}--\eqref{eqn_bottom}.
\end{lem}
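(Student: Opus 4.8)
The plan is to show that the set of edges $(X,Y)$ of $\Delta(P_r)$ for which we can deduce $f_{X,Y}=1$ is in fact \emph{all} of $\Delta(P_r)$, by bootstrapping from the spanning tree $T_{n,r}$ using the singular-square relations \eqref{eqn_bottom}. The starting data is the relations \eqref{eqn_middle}, which already give $f_{X,Y}=1$ for every edge in $T_{n,r}$. The engine for extending this is the observation that a singular square $(X,X',Y,Y')\in\Sigma$ gives the relation $f_{X,Y}^{-1}f_{X,Y'}=f_{X',Y}^{-1}f_{X',Y'}$, so if three of the four generators involved are already known to equal $1$, the fourth must as well. Thus the main task is combinatorial: given an arbitrary edge $(X,Y)$ with $YX=I_r$, produce a suitable chain of rectangular bands linking it back to the tree.

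First I would recall the explicit shape of the tree edges. By (T1), for each region $(i_1<\cdots<i_r)$ the scattered identity transpose $I(i_1|\cdots|i_r)^T$ is joined to \emph{every} $Y\in\Y_r$ in that region; dually by (T2) every $X\in\X_r$ in a region is joined to the scattered identity $I(i_1|\cdots|i_r)$. So the two ``scattered identity'' vertices in each region are already maximally connected, and the content is to reach a generic edge $(X,Y)$ where neither $X$ nor $Y$ is a scattered identity. The natural strategy is to fix $Y$ and vary the $\X_r$-coordinate: I would look for a singular square with corners $e_{X,Y}, e_{X,Y'}, e_{X',Y'}, e_{X',Y}$ in which $X'=I(\cdots)^T$ is a scattered identity (so $f_{X',Y}=1$ by (T1)) and $Y'$ is chosen so that $f_{X,Y'}=f_{X',Y'}=1$ already hold. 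Concretely, with $X'$ the scattered identity matching the leading rows of $X$, and $Y'$ the scattered identity matching the leading columns of $Y$, all three auxiliary edges $(X,Y')$, $(X',Y')$, $(X',Y)$ should lie among the tree edges (T1)/(T2), forcing $f_{X,Y}=1$. The verification reduces to checking that this quadruple is genuinely a rectangular band, i.e. that the corresponding $\gh$-classes all contain idempotents (equivalently the relevant products have rank $r$) and that equation \eqref{eq_99} holds; given the product-of-identity-matrices structure these products are themselves scattered identities or have an identity block, so rank $r$ is immediate and \eqref{eq_99} collapses to a trivial identity in $GL_r(Q)$.

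The step I expect to be the genuine obstacle is ensuring that the intermediate vertices $X'$ and $Y'$ can always be chosen so that the four entries $P_r(Y,X)$, $P_r(Y',X)$, $P_r(Y,X')$, $P_r(Y',X')$ are simultaneously nonzero (rank $r$) and that the auxiliary edges really are tree edges; this is a rank/position condition on scattered identities that must hold under the hypothesis $1\le r<n-1$. If a single square does not suffice for a given $(X,Y)$, the fallback is to build a short path of rectangular bands, proceeding one leading-column or leading-row position at a time, using the tree structure of the $\preceq$-poset (via the (T3) edges) to move between adjacent diagonal regions. I would therefore phrase the argument as an induction that peels off, in turn, the coordinates of $X$ (reducing to a scattered identity on the row side) and then of $Y$, at each step producing a rectangular band whose other three corners are already known, so that \eqref{eqn_bottom} transfers the value $1$ to the remaining corner. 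The constraint $r<n-1$ should be exactly what guarantees enough ``room'' among the $n$ columns to realise each intermediate scattered identity with the required rank.
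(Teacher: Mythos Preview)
Your high-level strategy matches the paper's exactly: recast the problem as colouring edges of $\Delta(P_r)$ blue, starting from the tree edges \eqref{eqn_middle}, and propagate via singular squares using \eqref{eqn_bottom} and Theorem~\ref{rectsing}. Your single-square construction with $X'=I(i_1|\cdots|i_r)^T$ and $Y'=I(j_1|\cdots|j_r)$ (the scattered identities matching the leading rows of $X$ and the leading columns of $Y$ respectively) is exactly what the paper does for the \emph{diagonal} regions $\Delta(i_1<\cdots<i_r)$; this is Lemma~\ref{lem_finishingoff}(i).

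The genuine gap is the off-diagonal case, which you flag but underestimate. When $LR(X)=\{i_1,\ldots,i_r\}\neq\{j_1,\ldots,j_r\}=LC(Y)$, your square collapses: $Y'X'=I(j_1|\cdots|j_r)\,I(i_1|\cdots|i_r)^T$ is not $I_r$ (indeed typically not even of rank $r$), so $(X',Y')$ is not an edge of $\Delta(P_r)$ at all, and neither $(X,Y')$ nor $(X',Y)$ is a tree edge. Your fallback of ``peeling off one leading position at a time'' is the right intuition, but the paper's actual argument shows this is where all the work lies. The key structural idea your proposal is missing is an \emph{induction on the pair $(n,r)$}: one locates a natural isomorphic copy of $\Delta_{n-1,r-1}$ inside $\Delta_{n,r}$ (the subgraph on those $X,Y$ with first row/column $[1,0,\ldots,0]$), colours it blue by the inductive hypothesis, and only then climbs out via a chain of lemmas treating successively more general region pairs (first $(1<\cdots)\times(1<\cdots)$, then regions sharing the first index, then arbitrary regions). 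The base case $r=1$ and the passage between adjacent diagonal regions each require their own careful square constructions (Lemmas~\ref{lem_finishingoff}--\ref{lem_Ireduction2}); a single uniform ``one coordinate at a time'' induction on $X$ and $Y$ does not go through without this scaffolding.
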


It will be useful to rephrase the problem in purely combinatorial terms. Let $n$ and $r$ be positive integers with $1 \leq r < n-1$, and let $\Delta_{n,r}$ be the bipartite graph given in Definition~\ref{def_Delta}.
Now we shall colour the edges of $\Delta_{n,r}$ so that every edge is either red or blue. Initially we colour all edges
from the spanning tree $T_{n,r}$ of $\Delta_{n,r}$ blue (i.e. the edges (T1), (T2) and (T3)) and all the other edges
red. Our aim is to turn the colour of every edge from red to blue, in the following way. For every square of edges in
$\Delta_{n,r}$ if three of the edges of the square are blue, and the fourth is red, the we can transform the fourth
edge from red to blue. We call such a transformation an \emph{elementary edge colour transformation}. The remainder of
this section will be concerned with proving the following result.

\begin{prop}
\label{lem_bipartitecolours}
Let $n$ and $r$ be positive integers with $1 \leq r < n-1$, and let $\Delta_{n,r}$ be the coloured bipartite graph defined above with blue edges for every edge in the spanning tree
$T_{n,r}$ and all other edges coloured red. Then every red edge of $\Delta_{n,r}$ may be turned blue by a finite sequence of elementary edge colour transformations.
\end{prop}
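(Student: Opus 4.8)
The plan is to argue entirely inside the coloured graph $\Delta_{n,r}$, pushing blueness outward from $T_{n,r}$ by means of squares. I will use repeatedly the fact that a square (a $4$-cycle) $X,Y,X',Y'$ of $\Delta_{n,r}$ exists precisely when all four products $YX,\,Y'X,\,YX',\,Y'X'$ equal $I_r$, and that an elementary transformation blues any one corner of such a square once the other three corners are blue. For a region $(i_1<\cdots<i_r)$ it is convenient to call $I(i_1|\cdots|i_r)^T$ its \emph{apex on the $\X_r$-side} and $I(i_1|\cdots|i_r)$ its \emph{apex on the $\Y_r$-side}; these two vertices are exactly the hubs of the (T1) and (T2) edges.

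First I would dispose of every edge $(X,Y)$ whose two endpoints lie in one region $p=(i_1<\cdots<i_r)$. A direct computation (already implicit in the verification that (T1) and (T2) are edges) shows that the $\X_r$-apex of $p$ is adjacent to \emph{every} $Y$ in region $p$, that the $\Y_r$-apex of $p$ is adjacent to \emph{every} $X$ in region $p$, and that the two apexes are joined by the edge common to (T1) and (T2). Hence, given an arbitrary intra-region edge $(X,Y)$, the four vertices $X$, $Y$, and the two apexes of $p$ form a square three of whose edges are already in $T_{n,r}$; a single elementary transformation then blues $(X,Y)$. After this step \emph{every} intra-region edge is blue, and all of them may be used freely as blue edges in what follows.

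Next I would set up the propagation tool for inter-region edges. Fix two regions $p,q$ and let $\Gamma_{p,q}$ be the bipartite subgraph of $\Delta_{n,r}$ spanned by the $X$-vertices of region $p$ and the $Y$-vertices of region $q$. If $(X,Y)$ and $(X',Y)$ are edges of $\Gamma_{p,q}$ sharing the vertex $Y$, then $X,X'$ lie in region $p$, so $(X,I(i_1|\cdots|i_r))$ and $(X',I(i_1|\cdots|i_r))$ are blue intra-region (T2)-edges, and the square on $X,Y,X'$ and the $\Y_r$-apex of $p$ links the colours of $(X,Y)$ and $(X',Y)$. Symmetrically, two edges sharing an $X$-vertex are linked through the $\X_r$-apex of $q$, using the blue intra-region edges into region $q$. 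Consequently blueness spreads along every path of $\Gamma_{p,q}$: as soon as one edge in a connected component of $\Gamma_{p,q}$ is blue, the whole component is blue.

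What remains is to seed each $\Gamma_{p,q}$ with a blue edge and to know that $\Gamma_{p,q}$ is connected. The initial seeds are the (T3) edges, which deposit one blue edge into $\Gamma_{p,\mathrm{pred}(p)}$ for each non-minimal region $p$, where $\mathrm{pred}(p)$ is the unique region covered by $p$ under $\preceq$ (recall that the Hasse diagram of $\preceq$ is a tree). I would then induct along this tree of regions, starting at $\{1,\ldots,r\}$ and moving outward, at each stage using the already-blue edges together with the apex squares of the previous paragraph to manufacture a blue seed in each newly relevant region-pair and then invoking the propagation lemma to finish that pair. The hard part — and the only genuinely geometric step — is the underlying connectedness claim: that the incidence graph $\Gamma_{p,q}$ of transverse pairs $(X,Y)$ with prescribed $\Coll$- and $\Roww$-pivot sets is connected, and hence that the seeds are never trapped. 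Concretely one must show that any two pairs $(X,Y)$, $(X',Y')$ with $YX=Y'X'=I_r$ and the prescribed pivots are joined by a chain of such pairs in which consecutive members differ in a single free coordinate, so that each link is realised by one apex square. This is exactly the sort of connectedness condition announced in the introduction, and it is precisely here that the hypothesis $r<n-1$ enters: the $n-r\ge 2$ free columns of an RRE matrix give the room needed to pivot one endpoint of an edge into the next while keeping the product equal to $I_r$. I would prove it by parametrising regions $p$ and $q$ by their free entries and running a Gaussian-elimination style argument that drives an arbitrary edge, one free coordinate at a time, onto the (T3) seed edge.
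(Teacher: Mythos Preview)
Your first two steps are sound and match the paper's Lemmas~\ref{lem_useful} and~\ref{lem_connectedregions}: intra-region edges are blued by the apex square, and within a fixed region-pair $\Gamma_{p,q}$ blueness propagates along connected components. The gap is in the third step, and it is a real one.

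You assert that each $\Gamma_{p,q}$ is connected (equivalently, that all its edges lie in one component), and you defer the proof to a ``Gaussian-elimination style argument''. But the paper explicitly remarks, immediately after its Lemma~\ref{lem_connectedregions}, that this connectedness hypothesis \emph{fails in general}. So your propagation tool alone cannot finish the job: there are region-pairs in which several edge-components coexist, and you would need a separate blue seed in each. Your seeding scheme does not supply this: the (T3) edges give exactly one seed in each $\Gamma_{p,\mathrm{pred}(p)}$, and your ``induct along the tree'' sketch gives no mechanism for manufacturing seeds in arbitrary $\Gamma_{p,q}$, let alone in every component of a disconnected $\Gamma_{p,q}$.

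The paper circumvents this by a quite different induction. Rather than inducting on the $\preceq$-tree of regions, it inducts on the pair $(n,r)$: the subgraph $\Delta_{n,r}'$ of $\Delta_{n,r}$ induced by those $X,Y$ whose first row/column is the standard unit vector is a colour-preserving copy of $\Delta_{n-1,r-1}$, so by induction all of its edges can be blued. From there the paper proceeds through a chain of special cases (Lemmas~\ref{lem_finishingoff}, \ref{lem_closer}, \ref{lem_ones}, \ref{lem_Ireduction2}), each time building explicit squares that borrow blue edges from previously treated region types rather than relying on connectedness of $\Gamma_{p,q}$. The base case $r=1$ is handled separately (Corollary~\ref{cor_basecases}). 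If you want to rescue your outline, you would need either to prove connectedness of $\Gamma_{p,q}$ in the specific cases you actually use (as the paper does in Lemmas~\ref{lem_finishingoff} and~\ref{lem_closer}) and avoid it elsewhere, or to replace the single-region-pair propagation by squares that jump between region-pairs, which is effectively what the paper's lemma chain does.
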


Before proving Proposition \ref{lem_bipartitecolours} we now show how Lemma \ref{lem_hash} follows from it.

\begin{proof}[Proof of Lemma \ref{lem_hash}]
If $(X,Y)\in T_{n,r}$ then we are done by \eqref{eqn_middle}. By Proposition \ref{lem_bipartitecolours} every
edge $(X,Y)$ can be reached, and turned blue, by a finite sequence of elementary edge colour transformations. The proof now
proceeds by induction on the number of elementary edge colour transformations required to turn the edge $(X,Y)$ blue. For
the inductive step we have vertices $X'\in X_r$ and $Y'\in\Y_r$ such that the edges $(X',Y)$, $(X,Y')$ and $(X',Y')$
have all been turned blue, and by induction $f_{X',Y}=1$, $f_{X,Y'}=1$ and $f_{X',Y'}=1$ are all consequences of the
relations \eqref{eqn_middle}--\eqref{eqn_bottom}. Since by definition of $\Delta_{n,r}$,
$$P_r(Y,X)=P_r(Y,X')=P_r(Y',X)=P_r(Y',X')=1$$
it follows by Theorem \ref{rectsing} and \eqref{eq_99} that this square is singular. Hence, by applying the relation
\eqref{eqn_bottom} from the presentation $\mathcal{P}_{n,r}$ we deduce $f_{X,Y}=1$.
\end{proof}
After first proving some general lemmas we shall then deduce that Proposition \ref{lem_bipartitecolours} holds for all
pairs $(n,r)=(n,1)$ with $n \geq 3$. Then we prove the result for an arbitrary pair $(n,r)$ (with $1 < r < n-1$) where we may (and
shall) assume inductively that the result holds for the pair $(n-1,r-1)$ (which we note still satisfies $r-1 <
(n-1) - 1$). Induction will be applied by finding a natural copy of the coloured bipartite graph $\Delta_{n-1,r-1}$ as an
induced subgraph of $\Delta_{n,r}$.

\begin{lem}
\label{lem_useful} Let $(X,Y), (X',Y)$ and $(X,Y')$ be edges of $\Delta_{n,r}$.
\begin{enumerate}
\item[(i)]
If $X$ and $X'$ are in the same region, and $(X,Y)$ has been turned blue, then
$(X',Y)$ can be turned blue.
\item[(ii)]
If $Y$ and $Y'$ are in the same region, and $(X,Y)$ has been turned blue, then
$(X,Y')$ can be turned blue.
\end{enumerate}
\end{lem}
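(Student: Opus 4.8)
The plan is to exploit the fact that, inside each diagonal region, the spanning tree $T_{n,r}$ has a star-like structure centred at the two scattered identity matrices that index the region. Fix the region $(i_1 < \cdots < i_r)$ and write $X_0 = I(i_1|\cdots|i_r)^T \in \X_r$ and $Y_0 = I(i_1|\cdots|i_r) \in \Y_r$. By the defining list of tree edges, $(X_0, Y)$ is a type (T1) edge, and hence blue, for \emph{every} $Y \in \Y_r$ belonging to this region, and dually $(X, Y_0)$ is a type (T2) edge, and hence blue, for every $X \in \X_r$ belonging to this region. These hub vertices are exactly what is needed to surround the edge I wish to recolour with three blue sides of a single square.

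For part (i), assume $X, X'$ both belong to the region $(i_1 < \cdots < i_r)$, that $(X,Y)$ is blue, and that $(X',Y)$ is an edge. If $Y = Y_0$ then $(X',Y) = (X', Y_0)$ is already a (T2) tree edge and there is nothing to prove, while if $X = X'$ the claim is trivial; so I may assume the four vertices $X, X', Y, Y_0$ are distinct. I would then consider the square on the $\X$-vertices $X, X'$ and the $\Y$-vertices $Y, Y_0$. All four of its sides are genuine edges of $\Delta_{n,r}$: the sides $(X, Y_0)$ and $(X', Y_0)$ are (T2) edges because $X$ and $X'$ lie in the region, while $(X,Y)$ and $(X',Y)$ are edges by hypothesis. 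Of these four, $(X,Y)$ is blue by assumption and $(X,Y_0), (X',Y_0)$ are blue as tree edges, so exactly one side, namely $(X',Y)$, is red. A single elementary edge colour transformation applied to this square therefore turns $(X',Y)$ blue, as required.

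Part (ii) is the mirror image, using the other hub vertex $X_0 = I(i_1|\cdots|i_r)^T$. Assuming $Y, Y'$ both lie in the region, that $(X,Y)$ is blue and that $(X,Y')$ is an edge (and disposing of the degenerate cases $X = X_0$ or $Y = Y'$ immediately), I would take the square on the $\X$-vertices $X, X_0$ and the $\Y$-vertices $Y, Y'$. Here $(X_0, Y)$ and $(X_0, Y')$ are (T1) edges, since $Y$ and $Y'$ belong to the region, and are therefore blue; together with the blue side $(X,Y)$ this leaves only $(X,Y')$ red, so one elementary transformation recolours it blue.

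The argument is essentially bookkeeping, so I do not anticipate a serious obstacle; the only points needing care are verifying that the auxiliary square is a genuine non-degenerate square of $\Delta_{n,r}$ (that is, that all four sides are present as edges) and correctly identifying which of its sides already belong to $T_{n,r}$. Both follow directly from the definitions of (T1), (T2) and of the regions, once one observes that the relevant hub edge is always of the appropriate tree type. The value of isolating this lemma is precisely that it reduces the global recolouring problem of Proposition~\ref{lem_bipartitecolours} to the task of moving between representative vertices of each region, so that later steps may work at the level of regions rather than individual matrices.
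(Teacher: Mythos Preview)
Your proof is correct and follows essentially the same approach as the paper's: in both arguments the square used to recolour $(X',Y)$ in part~(i) has as its two auxiliary sides the (T2) tree edges $(X,I(i_1|\cdots|i_r))$ and $(X',I(i_1|\cdots|i_r))$, and part~(ii) is handled dually via the (T1) edges through $I(i_1|\cdots|i_r)^T$. Your version is slightly more explicit in disposing of the degenerate cases $Y=Y_0$ and $X=X'$, which the paper leaves implicit.
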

\begin{proof}
(i) Suppose that $X,X' \in (i_1 < i_2 < \cdots i_r)$. Then both of the edges
\[
(X, I(i_1|i_2|\cdots|i_r)) \ \ \mbox{and} \ \ (X', I(i_1|i_2|\cdots|i_r)),
\]
belong to the tree $T_{n,r}$ and thus are assumed already to be blue edges.
Now the subgraph induced by these two edges together with the edges $(X,Y)$ and $(X',Y)$ form a square in
$\Delta_{n,r}$. It is then immediate from the definition of elementary edge colour transformation that once $(X,Y)$ has
been turned blue $(X',Y)$ may also be turned blue.

(ii) The proof is dual to that of (i), making use of the fact that,
with $Y, Y' \in (i_1 < i_2 < \cdots i_r)$,
both of the edges
\[
(I(i_1|i_2|\cdots|i_r)^T,Y) \ \ \mbox{and} \ \ (I(i_1|i_2|\cdots|i_r)^T,Y'),
\]
belong to the spanning tree $T_{n,r}$.
\end{proof}

\begin{lem}
\label{lem_connectedregions} Let $\Gamma$ be the subgraph
\[
(i_1 < i_2 < \cdots < i_r) \times (j_1 < j_2 < \cdots < j_r),
\]
of $\Delta_{n,r}$, and suppose that all the edges of $\Gamma$ belong to a single connected component of $\Gamma$.
Then every edge of $\Gamma$ may be turned blue provided at least one edge of $\Gamma$ has been turned blue.
\end{lem}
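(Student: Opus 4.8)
The plan is to exploit the two colour-transformation moves already established in Lemma~\ref{lem_useful} together with the connectedness hypothesis on $\Gamma$. Suppose we know that some particular edge $(X_0, Y_0)$ of $\Gamma$ has been turned blue. The key observation is that the two parts of Lemma~\ref{lem_useful} give us a way to ``move'' blueness along edges that share a vertex: part (i) lets us slide from $(X,Y)$ to $(X',Y)$ whenever $X,X'$ lie in the common region $(i_1 < \cdots < i_r)$, and part (ii) lets us slide from $(X,Y)$ to $(X,Y')$ whenever $Y,Y'$ lie in the common region $(j_1 < \cdots < j_r)$. Crucially, \emph{every} pair of $\X_r$-vertices in $\Gamma$ lies in the single region $(i_1 < \cdots < i_r)$, and every pair of $\Y_r$-vertices in $\Gamma$ lies in $(j_1 < \cdots < j_r)$, so both hypotheses of Lemma~\ref{lem_useful} are automatically satisfied for any edges inside $\Gamma$. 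Thus the two moves become: blueness propagates freely between any two edges of $\Gamma$ sharing an $\X_r$-endpoint, and between any two edges sharing a $\Y_r$-endpoint.

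First I would recast the problem as a statement about the connected component of $\Gamma$. Define an auxiliary notion of reachability: say an edge $e'$ of $\Gamma$ is \emph{reachable} from an edge $e$ if there is a path $e = e_0, e_1, \ldots, e_k = e'$ in which consecutive edges $e_{t}, e_{t+1}$ share a common vertex. By the observation above, if $e$ has been turned blue then by induction on $k$ each $e_t$ can be turned blue: at each step $e_{t}$ and $e_{t+1}$ share a vertex which is either an $\X_r$-vertex or a $\Y_r$-vertex, and correspondingly part (i) or part (ii) of Lemma~\ref{lem_useful} applies to turn $e_{t+1}$ blue once $e_t$ is blue. Hence reachability from a blue edge suffices to turn an edge blue.

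It remains to verify that every edge of $\Gamma$ is reachable from $(X_0, Y_0)$ in this edge-path sense. This is precisely the hypothesis that all edges of $\Gamma$ lie in a single connected component: in a connected graph, any two edges are joined by a walk through vertices, and such a vertex-walk translates directly into a sequence of edges in which consecutive edges share a vertex. I would make this translation explicit — given a path of vertices $v_0, v_1, \ldots, v_m$ realising connectivity between an endpoint of $(X_0,Y_0)$ and an endpoint of the target edge, the edges $\{v_{i-1}, v_i\}$ together with $(X_0,Y_0)$ and the target edge form the required edge-path, since each consecutive pair shares the vertex $v_i$. Therefore every edge of $\Gamma$ is reachable from $(X_0,Y_0)$ and hence can be turned blue.

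The main (and only real) obstacle here is bookkeeping rather than any genuine difficulty: one must be careful that the induction is on the \emph{number} of colour transformations or equivalently on the path length, and that at each elementary step the relevant square used in Lemma~\ref{lem_useful} genuinely sits inside $\Delta_{n,r}$ (which it does, because the scattered-identity edges $(X, I(i_1|\cdots|i_r))$ and $(I(i_1|\cdots|i_r)^T, Y)$ defining those squares are tree edges of $T_{n,r}$ and hence edges of $\Delta_{n,r}$). Since Lemma~\ref{lem_useful} has already done the geometric work of producing the singular squares, the present lemma is a clean graph-connectivity argument layered on top of it, and no new algebraic input is needed.
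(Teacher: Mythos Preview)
Your proposal is correct and follows essentially the same approach as the paper: both arguments reduce to observing that Lemma~\ref{lem_useful} lets blueness propagate between adjacent edges of $\Gamma$ (adjacent meaning sharing a vertex), and then the connectedness hypothesis furnishes an edge-path between any two edges. One small slip: in your final parenthetical the tree edge for a $\Y_r$-vertex $Y$ in region $(j_1<\cdots<j_r)$ is $(I(j_1|\cdots|j_r)^T,Y)$, not $(I(i_1|\cdots|i_r)^T,Y)$, but since you are simply invoking Lemma~\ref{lem_useful} this does not affect the argument.
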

\begin{proof}
Let $(X,Y)$ and $(X',Y')$ be edges in $\Gamma$.
Suppose that the edges $(X,Y)$ and $(X',Y')$ are adjacent i.e. that $X=X'$ or $Y=Y'$. Then
it follows from Lemma~\ref{lem_useful} that $(X,Y)$ can be turned blue if and only if $(X',Y')$ can be turned blue. But
since all the edges of $\Gamma$ belong to a single connected component
there is a sequence of edges between $(X,Y)$ and $(X',Y')$ where adjacent edges
in the sequence are adjacent in $\Gamma$. The result is now immediate.
\end{proof}
It should be noted that it is not true in general that every region will satisfy the hypotheses of Lemma~\ref{lem_connectedregions}.

\begin{lem}
\label{lem_finishingoff} Let $(X,Y)$ be an edge of $\Delta_{n,r}$. For any subset $\{i_1, \ldots, i_r\}$ of
$\{1,\ldots,n\}$, if $(X,Y)$ belongs to any of the following subgraphs:
\begin{enumerate}
\item[(i)]
$(i_1 < i_2 < \cdots < i_r)
\times
(i_1 < i_2 < \cdots < i_r)$,
\item[(ii)]
$(i_1 < i_2 < \cdots < i_r)
\times
(i_1-1 < i_2 < \cdots < i_r)$ $(i_1 \geq 2)$, or
\item[(iii)]
$(i_1-1 < i_2 < \cdots < i_r)
\times
(i_1 < i_2 < \cdots < i_r)$ $(i_1 \geq 2)$,
\end{enumerate}
then $(X,Y)$ can be turned blue.
\end{lem}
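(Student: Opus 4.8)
The plan is to reduce all three cases to Lemma~\ref{lem_connectedregions}: for each of the three region-pair subgraphs $\Gamma$ in (i)--(iii) I would show that all edges of $\Gamma$ lie in a single connected component, and that $\Gamma$ already contains (or can be made to contain) a blue edge; the conclusion is then immediate from Lemma~\ref{lem_connectedregions}. The connectivity in each case will be established by exhibiting a \emph{hub} vertex adjacent to every vertex on the opposite side, so that every edge of $\Gamma$ is joined, through the star at the hub, to every other.

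For case (i), put $P=\{i_1,\ldots,i_r\}$. Since $I(i_1|\cdots|i_r)^T\in\X_r$ has its only nonzero entries in the leading rows, a short computation gives $(I(i_1|\cdots|i_r)^T,Y)$ an edge for \emph{every} $Y$ in region $P$ (these are exactly the edges of type (T1)); dually $I(i_1|\cdots|i_r)\in\Y_r$ is joined to every $X$ in region $P$ (the edges (T2)), and the two scattered identities are joined to each other. Hence both are hubs, $\Delta(i_1<\cdots<i_r)$ is connected, and it contains the blue (T1) edges, so Lemma~\ref{lem_connectedregions} applies.

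The non-diagonal cases rest on a single computation. Write $A=\{i_1-1,i_2,\ldots,i_r\}$ and $B=\{i_1,i_2,\ldots,i_r\}$, and set $Y_0=I(i_1-1,i_1|i_2|\cdots|i_r)\in\Y_r$, which lies in region $A$. Because $i_1\geq 2$, every $X\in\X_r$ in region $B$ has row $i_1-1$ equal to zero (it lies strictly above the first leading row) while row $i_1$ is the first standard basis row; thus the doubled column of $Y_0$ at position $i_1-1$ contributes nothing and $Y_0X=I_r$ for \emph{every} such $X$. So in case (ii), the region $B\times A$, the vertex $Y_0$ is a hub adjacent to every $X$-vertex, forcing $\Gamma$ to be connected; as $\Gamma$ contains the blue (T3) edge $(I(i_1|\cdots|i_r)^T,Y_0)$, Lemma~\ref{lem_connectedregions} settles case (ii). Transposing this computation (using $(YX)^T=X^TY^T$ and $\X_r=\{Z^T:Z\in\Y_r\}$) shows that in case (iii), the region $A\times B$, the vertex $Y_0^T\in\X_r$ is joined to every $Y$-vertex, so that region is connected too.

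It remains, for case (iii), to produce one blue edge, and here I would run cases (i) and (ii) first, so that the two diagonal regions $\Delta(i_1<\cdots<i_r)$ and $\Delta(i_1-1<i_2<\cdots<i_r)$ and the region of case (ii) are all blue. A single square of $\Delta_{n,r}$ having one edge in each of these three already-blue regions and its fourth edge in region (iii) then turns that fourth edge blue by one elementary edge colour transformation; explicitly one takes $X=I(i_1|\cdots|i_r)^T$ and $Y=Y_0$ for the region-(ii) corner, an augmented scattered identity of region $B$ for $Y'$, and a representative $X^\dagger$ of region $A$ carrying an auxiliary copy of the first basis vector in a row above the first pivot, and checks that all four products equal $I_r$ (so the square is a rectangular band, hence singular by Theorem~\ref{rectsing} and \eqref{eq_99}). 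A final application of Lemma~\ref{lem_connectedregions} blues all of region (iii). I expect the main obstacle to be exactly this bilinear bookkeeping: verifying $YX=I_r$ when the leading-position sets of $X$ and $Y$ are forced to differ in their smallest entry, and positioning the auxiliary row in case (iii) so as to avoid spurious $1+1$ contributions; some care is also needed in the extremal region $i_1=n-r+1$, where there may be no room for the auxiliary index above the pivots and the square must be arranged slightly differently, the hypothesis $r<n-1$ guaranteeing enough room otherwise.
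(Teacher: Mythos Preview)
Your handling of parts (i) and (ii), and of the connectivity claim in (iii) via the hub $Y_0^T$, is essentially the paper's argument. The difficulty is the final step in (iii): producing a first blue edge.

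First a small slip: the auxiliary row in $X^\dagger$ cannot sit \emph{above} the first pivot. Since $(X^\dagger)^T$ is in RRE form with first leading column $i_1-1$, every row of $X^\dagger$ with index $<i_1-1$ is forced to be zero. The auxiliary copy of $e_1^T$ must go in a row $k$ with $k>i_1$ and $k\notin\{i_2,\ldots,i_r\}$; correspondingly $Y'$ must carry an $e_1$ in column $k$, and one checks that the choice $k=i_1$ is ruled out precisely by the ``spurious $1+1$'' you anticipate in $Y_0X^\dagger$. With $k$ so placed, your square becomes exactly the paper's Case~2 construction (stated there only for $i_1=2$, but valid whenever such a $k$ exists).

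The substantive gap is the extremal region $i_1=n-r+1$. Here $\{i_1,\ldots,i_r\}=\{n-r+1,\ldots,n\}$, so there is \emph{no} index $k>i_1$ outside $\{i_2,\ldots,i_r\}$, and in fact no single square with its other three edges in regions of type (i) or (ii) exists at all: region $B$ contains only the scattered identity, forcing $X'=I(i_1|\cdots|i_r)^T$ and $Y'=I(i_1|\cdots|i_r)$, and then the only compatible $X^\dagger,Y$ in region $A$ are $Y_0^T,Y_0$, for which $Y_0Y_0^T\neq I_r$. The hypothesis $r<n-1$ does not rescue this; what it guarantees instead is $i_1=n-r+1\geq 3$, so that one may reach down to $i_1-2$. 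The paper (Case~1) exploits this with a genuinely two-step argument: first a square using two type-(ii) edges and one diagonal edge turns blue an edge in the region $(i_1<\cdots<i_r)\times(i_1-2<i_2<\cdots<i_r)$; then Lemma~\ref{lem_useful} and a second square finally produce a blue edge in region (iii). Your proposal needs this extra manoeuvre (or an equivalent) to close the case $i_1=n-r+1$.
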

\begin{proof}
Suppose that $(X,Y)$ belongs to the subgraph (i) and let $I_0 = I(i_1|\cdots|i_r)$. Then the edges $(I_0^T,I_0)$,
$(X,I_0)$ and $(I_0^T,Y)$, which all belong to the tree $T_{n,r}$ and hence are blue edges, together with the edge $(X,Y)$
form a square, and hence $(X,Y)$ can be turned blue.

Now suppose that $(X,Y)$ belongs to the subgraph (ii) which we shall denote here by $\Gamma$. We claim that all the
edges of $\Gamma$ belong to a single connected component of $\Gamma$. Indeed, for every vertex $X \in (i_1 < i_2 <
\cdots < i_r)$, the edge
\begin{align}
\label{eqn_oneofthese}
(X, I(i_1-1, i_1 | i_2 | \cdots | i_r))
\end{align}
belongs to $\Gamma$ since $I(i_1-1, i_1 | i_2 | \cdots | i_r) X = I_r$ and, since $\Gamma$ is bipartite, every other
edge of $\Gamma$ shares a common vertex with an edge from of the form \eqref{eqn_oneofthese}. But the edge
$(I(i_1|i_2|\cdots|i_r)^T, I(i_1-1, i_1 | i_2 | \cdots | i_r))$ from the spanning tree $T_{n,r}$ belongs to (ii), and is
blue by assumption, and so it follows from Lemma~\ref{lem_connectedregions} that, since one edge has been turned blue
and all the edges in the region belong to a single connected component, every edge in the subgraph (ii) can be turned
blue.

Finally suppose that $(X,Y)$ belongs to the subgraph (iii). This is the most difficult of the three cases since there
are no edges from the spanning tree $T_{n,r}$ in this region. By a dual argument to case (ii), since the mapping $X \mapsto
X^T$ is an automorphism of the graph $\Delta_{n,r}$ preserving regions, we conclude that the subgraph (iii) has a
connected component that contains all of its edges, and thus, by Lemma~\ref{lem_connectedregions} it will suffice to
show that at least one edge in each subgraph of type (iii) can be turned blue.

We treat the case $i_1-1=1$ (that is, the case $\{i_1-1,i_1\}=\{1,2\}$) separately.

\

\noindent \textit{Case~1: $i_1 \geq 3$.}
First observe that the edge
\begin{align}\label{edge1}
(
I(i_1 | i_2 | \ldots | i_r)^T,
I(i_1-2, i_1-1, i_1 | i_2 | \ldots | i_r)
)
\end{align}
completes to a square in $\Delta_{n,r}$ when taken together with the following three edges
\begin{align*}
(I(i_1| i_2 | \cdots | i_r)^T,
I(i_1-1, i_1 | i_2 | \cdots | i_r)), \\
(I(i_1-1 | i_2 | \cdots | i_r)^T,
I(i_1-2, i_1-1, i_1 | i_2 | \cdots | i_r)), \\
(I(i_1-1 | i_2 | \cdots | i_r)^T,
I(i_1-1, i_1 | i_2 | \cdots | i_r)).
\end{align*}
But these three edges are from subgraphs of type (ii), (ii) and (i), respectively, and therefore by parts (i) and (ii)
we know that all three of these edges may be turned blue, and therefore the edge \eqref{edge1} may be turned blue.
Next consider the edge
\begin{align}\label{edge2}
(I(i_1 | i_2 | \cdots | i_r)^T,
I(i_1-2, i_1 | i_2 | \cdots | i_r)).
\end{align}
Comparing the edges \eqref{edge1} and \eqref{edge2}, since $I(i_1-2, i_1-1, i_1 | i_2 | \cdots | i_r)$ and
$I(i_1-2, i_1 | i_2 | \cdots | i_r)$ both belong to the region
\[
(i_1-2 < i_2 < i_3 < \cdots < i_r ),
\]
and since the edge \eqref{edge1} has been turned blue, it follows from Lemma~\ref{lem_useful}(ii) that
we can turn the edge \eqref{edge2} blue.

Finally, consider the edge
\begin{align}\label{eqn_offedge}
(
I(i_1-1, i_1 | i_2 | \ldots | i_r)^T,
I(i_1 | i_2 | \ldots | i_r)
).
\end{align}
This edge completes to a square in $\Delta_{n,r}$ when taken together with the following three edges
\begin{align*}
(I(i_1-1, i_1 | i_2 | \ldots | i_r)^T,
I(i_1-2, i_1 | i_2 | \ldots | i_r)), \\
(I(i_1 | i_2 | \cdots | i_r)^T,
I(i_1 | i_2 | \cdots | i_r)), \\
(I(i_1 | i_2 | \cdots | i_r)^T,
I(i_1-2, i_1 | i_2 | \cdots | i_r)).
\end{align*}
The last of these three edges is \eqref{edge2} which we have already shown can be turned blue, while the
other two edges belong to subgraphs of types (ii) and (i), respectively, so are also blue. Thus we deduce that the edge
\eqref{eqn_offedge} can be turned blue and, since this edge belongs to the subgraph (iii), this completes the proof in
this case.

\

\noindent \textit{Case~2: $\{i_1-1,i_1\}=\{1,2\}$.}  In this case,
choose $k \in \{1,\ldots,
n\} \setminus \{1,2,i_2, i_3, \ldots, i_r \}$. This is possible since $n>r+1$. Then the subgraph of
$\Delta_{n,r}$ induced by the four vertices
\begin{align*}
\Y_r: & & I(1,2|i_2|i_3|\cdots|i_r) & & & I(2,k|i_2|i_3|\cdots|i_r) \\
\X_r: & & I(1,k|i_2|i_3|\cdots|i_r)^T & & & I(2|i_2|i_3|\cdots|i_r)^T
\end{align*}
is a square, three of whose edges belong to subgraphs of types (i) or (ii) and so are blue edges, which means that the
fourth edge
\[
(
I(1,k|i_2|i_3|\cdots|i_r)^T,
I(2,k|i_2|i_3|\cdots|i_r)
),
\]
which belongs to the subgraph (iii), can be turned blue, completing the proof for this case. (Note that these four
matrices are in RRE form regardless of the value of $k$.) \end{proof}
We give another general result.

\begin{lem}
\label{lem_closer} Let $(X,Y)$ be an edge of $\Delta_{n,r}$. If $(X,Y)$ belongs to the subgraph
\begin{align}
\label{eqn_similar}
(i_1 < i_2 < \cdots < i_r)
\times
(j_1 < i_2 < \cdots < i_r),
\end{align}
then the edge $(X,Y)$ can be turned blue.
\end{lem}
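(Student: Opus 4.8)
The plan is to induct on the distance $d = |i_1 - j_1|$ between the first coordinates of the two regions, using crucially that the tails $i_2 < \cdots < i_r$ coincide. The cases $d=0$ and $d=1$ are precisely parts (i), (ii), (iii) of Lemma~\ref{lem_finishingoff}, so I would take these as the base of the induction and assume $d \geq 2$, together with the assertion of the lemma for every region with the same tail $i_2 < \cdots < i_r$ and first-coordinate distance strictly less than $d$. I would also reduce to the case $j_1 < i_1$: the opposite case $j_1 > i_1$ is governed by the transpose automorphism $X \mapsto X^T$ of $\Delta_{n,r}$, so the mirror-image square can be written down directly and verified by the same computation, without claiming that transposition preserves the colouring.

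First I would show that the whole region $\Gamma = (i_1 < i_2 < \cdots < i_r) \times (j_1 < i_2 < \cdots < i_r)$ lies in a single connected component, so that by Lemma~\ref{lem_connectedregions} it suffices to turn one edge blue. The device is the ``long'' scattered identity $Y_0 = I(j_1, j_1+1, \ldots, i_1 | i_2 | \cdots | i_r)$, which lies in the region $(j_1 < i_2 < \cdots < i_r)$. I expect it to satisfy $Y_0 X = I_r$ for \emph{every} $X$ in the region $(i_1 < i_2 < \cdots < i_r)$: such an $X$ is the transpose of an RRE matrix whose first pivot is in column $i_1$, so all rows of $X$ indexed below $i_1$ vanish and row $i_1$ equals $e_1$, whence summing rows $j_1$ through $i_1$ returns $e_1$. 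Thus $Y_0$ is adjacent to every $X$-vertex of $\Gamma$, forcing all edges of $\Gamma$ into one component.

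To finish I would turn a single edge blue using the square on
\[
X_0 = I(i_1 | i_2 | \cdots | i_r)^T, \qquad X' = I(j_1+1 | i_2 | \cdots | i_r)^T,
\]
\[
Y_0 = I(j_1, \ldots, i_1 | i_2 | \cdots | i_r), \qquad Y' = I(j_1+1, \ldots, i_1 | i_2 | \cdots | i_r).
\]
A routine check should confirm that all four products $Y_0X_0$, $Y_0X'$, $Y'X_0$, $Y'X'$ equal $I_r$, so these really are four edges of a square, and it is non-degenerate since $i_1 > j_1+1$ and $j_1 \neq j_1+1$. The point is that the three edges other than the target $(X_0,Y_0)$ fall into easier regions: $(X',Y')$ is diagonal, $(X',Y_0)$ is of type~(ii) (with parameter $j_1+1 \geq 2$), and $(X_0,Y')$ has the same tail and first-coordinate distance $d-1$. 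By Lemma~\ref{lem_finishingoff}(i), Lemma~\ref{lem_finishingoff}(ii), and the inductive hypothesis respectively, these three can be turned blue, so the target $(X_0,Y_0)$ can be turned blue; combined with the connectivity above and Lemma~\ref{lem_connectedregions}, every edge of $\Gamma$ turns blue.

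The main obstacle I anticipate is arranging the square so that shortening the solid block of $1$'s in the first row of $Y_0$ from the bottom (replacing $j_1$ by $j_1+1$, and correspondingly $X_0$ by $X'$) drops the distance by exactly one on the edge $(X_0,Y')$ while leaving the remaining two edges in the base-case regions of distances $0$ and $1$. This telescoping bookkeeping, together with the fact that the single matrix $Y_0$ must play the dual role of universal vertex for connectivity and far corner of the square, is the delicate part; the matrix identities $YX = I_r$ themselves are straightforward once one records the vanishing of the low-indexed rows of the relevant $X$.
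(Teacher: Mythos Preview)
Your proposal is correct and follows essentially the same approach as the paper: induct on $|i_1-j_1|$ with the base cases $d\le 1$ supplied by Lemma~\ref{lem_finishingoff}, establish that all edges of the region lie in a single connected component via a universal vertex so that Lemma~\ref{lem_connectedregions} applies, and then exhibit a square whose other three edges lie at first-coordinate distances $0$, $1$, and $d-1$. The only cosmetic difference is your choice of the ``long'' interval matrix $Y_0=I(j_1,\ldots,i_1\,|\,i_2\,|\cdots|\,i_r)$ (reused both as universal vertex and as a corner of the square), whereas the paper uses the two-element versions $I(i_1,j_1\,|\,i_2\,|\cdots|\,i_r)$ and $I(i_1,i_1{+}1\,|\,i_2\,|\cdots|\,i_r)$; both choices verify $YX=I_r$ for the same reason (vanishing of the low-indexed rows of $X$) and yield the same inductive reduction.
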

\begin{proof}
We claim that this subgraph has a single connected component containing all of its edges. When $i_1 = j_1$, so the
subgraph is a diagonal region, this is immediate from the definition of the spanning tree $T_{n,r}$. Now suppose $i_1 < j_1 <
i_2$, the other case being dual. Then for every vertex $B$ in $(j_1 < i_2 < \cdots < i_r)$ the edge
\begin{align}
\label{eqn_frog}
(
I(i_1, j_1|i_2|i_3|\cdots|i_r)^T,
B
)
\end{align}
belongs to $\Delta_{n,r}$ and, since the graph induced by this region is bipartite, it follows that every edge in this
subgraph shares a vertex with one of the edges \eqref{eqn_frog}, proving the claim. Thus by
Lemma~\ref{lem_connectedregions} for each subgraph of the form \eqref{eqn_similar} once we have turned one edge blue,
we can conclude that every other edge in that subgraph can be turned blue.

We prove the lemma by induction on $|i_1-j_1|$. When $|i_1-j_1| \leq 1$ the result holds by
Lemma~\ref{lem_finishingoff}, so suppose otherwise and assume that the result holds for smaller values of $|i_1-j_1|$.
Suppose that $i_1<j_1$ and $j_1 \neq i_1+1$, the other case is dual. Now consider the subgraph of $\Delta_{n,r}$
induced by the four vertices
\begin{align*}
\Y_r: & & I(j_1|i_2|\cdots|i_r) & & & I(i_1, i_1+1|i_2|\cdots|i_r) \\
\X_r: & & I(i_1,j_1|i_2|\cdots|i_r)^T & & & I(i_1+1, j_1|i_2|i_3|\cdots|i_r)^T.
\end{align*}
Three of these edges belong to subgraphs whose edges may be turned blue by induction, thus the remaining edge
\[
(
I(i_1,j_1|i_2|\cdots|i_r)^T,
I(j_1|i_2|\cdots|i_r)
),
\]
which belongs to the subgraph
\[
(i_1 < i_2 < \cdots < i_r) \times
(j_1 < i_2 < \cdots < i_r)
\]
may also be turned blue, completing the inductive step, and hence the proof of the lemma.
\end{proof}
The following result will serve as a family of base cases for the induction proving
Proposition~\ref{lem_bipartitecolours}.

\begin{cor}
\label{cor_basecases} For every positive integer $n \geq 3$, every edge $(X,Y)$ in $\Delta_{n,1}$ can be turned blue.
\end{cor}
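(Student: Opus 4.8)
The plan is to prove Corollary~\ref{cor_basecases} by specialising the general machinery already developed in this section to the case $r=1$, and then showing that the diagonal and near-diagonal regions together with Lemma~\ref{lem_closer} already cover \emph{all} of $\Delta_{n,1}$. First I would observe what a region looks like when $r=1$: an $r$-element subset of $\{1,\ldots,n\}$ is just a singleton $(i)$, so the regions are indexed by single integers $i \in \{1,\ldots,n\}$, and a generic region is $(i) \times (j)$. For $r=1$ there are no further coordinates $i_2, \ldots, i_r$ to match, so the subgraph $(i) \times (j)$ appearing in Lemma~\ref{lem_closer} with $r=1$ is literally \emph{every} region $(i)\times(j)$; there is no additional constraint to satisfy. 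Thus the hypothesis of Lemma~\ref{lem_closer} is automatically met by an arbitrary region when $r=1$.

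Given this observation, the proof is short. Let $(X,Y)$ be an arbitrary edge of $\Delta_{n,1}$. Since $n \geq 3$ we have $1 \leq r = 1 < n-1$, so all the lemmas of this section apply. The vertex $X \in \X_1$ lies in some region $(i)$ and the vertex $Y \in \Y_1$ lies in some region $(j)$, so $(X,Y)$ belongs to the subgraph $(i)\times(j)$. Taking $i_1 = i$ and $j_1 = j$ (and with no remaining coordinates since $r=1$), this is precisely a subgraph of the form \eqref{eqn_similar} in the statement of Lemma~\ref{lem_closer}. Hence by Lemma~\ref{lem_closer} the edge $(X,Y)$ can be turned blue, and since $(X,Y)$ was arbitrary the corollary follows.

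I would want to double-check two details before presenting this. The first is the degenerate-coordinate bookkeeping: Lemma~\ref{lem_closer} is stated for regions $(i_1 < i_2 < \cdots < i_r) \times (j_1 < i_2 < \cdots < i_r)$ where only the \emph{first} coordinates $i_1, j_1$ are allowed to differ; when $r=1$ the tails $i_2 < \cdots < i_r$ are empty, so the restriction ``$Y$ must share all but the first coordinate with $X$'' is vacuous, confirming that every region qualifies. The second is the role of the hypothesis $n \geq 3$: it is needed so that $r < n-1$ holds (here $1 < n-1$, i.e. $n > 2$), which is the standing assumption under which Lemmas~\ref{lem_useful}--\ref{lem_closer} were proved; in particular the choice of an auxiliary index $k$ in Case~2 of Lemma~\ref{lem_finishingoff} requires $n > r+1 = 2$. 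I expect no genuine obstacle here: the only subtlety is making sure the reader sees \emph{why} Lemma~\ref{lem_closer} with $r=1$ already exhausts every region rather than only the near-diagonal ones, and the proof is essentially a one-line application once that point is made clear.
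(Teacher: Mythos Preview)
Your proposal is correct and follows exactly the same approach as the paper's proof: both observe that when $r=1$ every region of $\Delta_{n,1}$ has the form \eqref{eqn_similar} (since the tail $i_2<\cdots<i_r$ is empty and the matching condition is vacuous), and then invoke Lemma~\ref{lem_closer} directly. Your additional remarks on the bookkeeping and the role of $n\geq 3$ are accurate elaborations but not needed for the argument itself.
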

\begin{proof}
In this case every edge of $\Delta_{n,1}$ belongs to a region of the form \eqref{eqn_similar} and hence can be turned blue by Lemma~\ref{lem_closer}.
\end{proof}
So, from now on in this section we may suppose that
$r$ and $n$ are integers satisfying
$1< r < n-1$, shall assume that
Proposition~\ref{lem_bipartitecolours} holds for the pair $(n-1,r-1)$, and then prove under this assumption that the
proposition holds for the pair $(n,r)$. Then by induction, with Corollary~\ref{cor_basecases} dealing with the base cases,
this will suffice to prove Proposition~\ref{lem_bipartitecolours}. These assumptions will remain in place for the rest
of this section.

In order to apply our inductive assumption we shall first need to identify a natural subgraph of $\Delta_{n,r}$ which is isomorphic to $\Delta_{n-1,r-1}$. Let $\Delta_{n,r}'$ denote the subgraph of $\Delta_{n,r}$ induced by the set $\Y_r'$ of all vertices from $\Y_r$ of the form
\[
Y = \left[
\begin{array}{cc}
\begin{matrix}
1
\end{matrix}\;\vline
&
\begin{matrix}
0 & 0 & 0 & \cdots & 0 & 0
\end{matrix}
\\\hline
\begin{matrix}
0 \\\vdots \\ 0
\end{matrix}\;\vline
&
\begin{matrix}
\widehat{Y}
\end{matrix}
\end{array}\right],
\]
together with the set $\mathcal{X}_r' \subseteq \mathcal{X}_r$ of transposes of the elements of $\Y_r'$.
Since $Y \in \Y_r' \subseteq \Y_r$ is an $r \times n$ rank $r$ matrix in RRE form, it follows that $\widehat{Y} \in
\Y_{n-1,r-1}$, where $\Y_{n-1,r-1}$ denotes the set of all $(r-1) \times (n-1)$ rank $r-1$ matrices in RRE form.
Conversely given any $(r-1) \times (n-1)$ rank $r-1$ matrix $\widehat{Y}$ in RRE form, the matrix $Y$ above is clearly
then an $r \times n$ rank $r$ matrix in RRE form. Thus $\; \widehat{} \; $ defines, in a natural way, a bijection
between the subset $\Y_r'$ of $\Y_r$ and the set $\Y_{n-1,r-1}$. The obvious dual statements hold for pairs $X \in
\X_r$, $X' \in \X_{n-1,r-1}$ where $\X_{n-1,r-1}$
denotes the set of transposes of elements of $\Y_{n-1,r-1}$. Therefore we have a natural bijection
\begin{align}
\label{eqn_naturalbij}
\widehat{\;} \; :(\X_r' \times \Y_r') \rightarrow (\X_{n-1,r-1} \times \Y_{n-1,r-1}), \quad (X,Y) \mapsto (\widehat{X},\widehat{Y}).
\end{align}

Next we observe that the bijection \eqref{eqn_naturalbij} is actually an isomorphism between the subgraph $\Delta_{n,r}'$ of
$\Delta_{n,r}$ induced by $\X_r' \cup \Y_r'$ and the graph $\Delta_{n-1,r-1}$. This is easily seen. Indeed, for every
pair $(X,Y) \in \X_r' \times \Y_r'$ we have
\[
YX = \left[
\begin{array}{cc}
\begin{matrix}
1
\end{matrix}\;\vline
&
\begin{matrix}
0 & \cdots & 0
\end{matrix}
\\\hline
\begin{matrix}
0 \\\vdots \\ 0
\end{matrix}\;\vline
&
\begin{matrix}
\widehat{Y}\widehat{X}
\end{matrix}
\end{array}
\right] \in M_r(\F),
\]
in particular $YX = I_r$ if and only if $\widehat{Y}\widehat{X}=I_{r-1}$, and therefore $(X,Y)$ is an edge of
$\Delta_{n,r}'$ if and only if $(\widehat{X},\widehat{Y})$ is an edge of $\Delta_{n-1,r-1}$.

Finally, looking at the list of edges (T1), (T2) and (T3) in the definition of the spanning tree, we see that the
edge $(X,Y)$ belongs the spanning tree $T_{n,r}$ of $\Delta_{n,r}$ if and only if the edge $(\widehat{X},\widehat{Y})$
belongs to the spanning tree $T_{n-1,r-1}$ of $\Delta_{n-1,r-1}$. In other words, for every edge $(X,Y)$ of
$\Delta_{n,r}' \subseteq \Delta_{n,r}$, $(X,Y)$ is an initial blue edge of $\Delta_{n,r}$ if and only if
$(\widehat{X},\widehat{Y})$ is an initial blue edge of $\Delta_{n-1,r-1}$. (Here, in each case, by an initial blue edge we mean an
edge that is blue by virtue of being in the spanning tree.)

Now consider an arbitrary edge $(X,Y)$ of $\Delta_{n,r}$ such that $(X,Y)$ belongs to $\Delta_{n,r}'$. Then from the
above observations $\Delta_{n,r}'$ is an isomorphic copy of $\Delta_{n-1,r-1}$, preserving the initial red and blue
edge colours, and since by induction we are assuming that Proposition~\ref{lem_bipartitecolours} holds for
$\Delta_{n-1,r-1}$, it follows immediately that using the same sequence of elementary edge transformations inside
$\Delta_{n,r}'$ we can transform $(X,Y)$ into a blue edge.

\

\begin{quote}
\begin{itemize}
\item[($\dag$)] \emph{Therefore by induction we may assume that every edge $(X,Y)$ of $\Delta_{n,r}$ in $\Delta_{n,r}'$
has already been turned blue. This assumption will remain in place for the rest of the section. }
\end{itemize}
\end{quote}

\

\begin{lem}
\label{lem_ones} Let $(X,Y)$ be an edge in $\Delta_{n,r}$. If $(X,Y)$ belongs to the subgraph
\begin{align}
\label{eqn_1start}
(1 < i_2 < \cdots < i_r)
\times
(1 < j_2 < \cdots < j_r),
\end{align}
then $(X,Y)$ can be turned blue.
\end{lem}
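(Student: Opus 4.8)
The plan is to reduce an arbitrary edge of the region $\Gamma = (1 < i_2 < \cdots < i_r)\times(1 < j_2 < \cdots < j_r)$ to a blue edge lying in $\Delta_{n,r}'$, whose edges are all blue by assumption~($\dag$), and then to transport blueness to the original edge by two applications of Lemma~\ref{lem_useful}. If $\Gamma$ contains no edge there is nothing to prove, so I assume $(X,Y)$ is an edge of $\Gamma$, i.e. $YX = I_r$.

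Since $X$ belongs to a region beginning with $1$, its first row is $[1,0,\ldots,0]$; since $Y$ belongs to a region beginning with $1$, its first column is $[1,0,\ldots,0]^T$. Writing both in block form according to this leading structure,
\[
X = \begin{bmatrix} 1 & \mathbf 0 \\ \mathbf x & X' \end{bmatrix},
\qquad
Y = \begin{bmatrix} 1 & \mathbf y \\ \mathbf 0 & Y' \end{bmatrix},
\]
with $X'$ of size $(n-1)\times(r-1)$ and $Y'$ of size $(r-1)\times(n-1)$ (the column $\mathbf x$ will play no role). Multiplying out $YX=I_r$ and reading off blocks, the bottom-right block yields $Y'X'=I_{r-1}$, while the top-right block yields the key identity $\mathbf y X' = \mathbf 0$. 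Comparing pivot positions one checks that $X' \in \X_{n-1,r-1}$ lies in the region $(i_2-1<\cdots<i_r-1)$ and $Y' \in \Y_{n-1,r-1}$ in $(j_2-1<\cdots<j_r-1)$.

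Next I would clean up the off-diagonal blocks, setting
\[
\bar X = \begin{bmatrix} 1 & \mathbf 0 \\ \mathbf 0 & X' \end{bmatrix} \in \X_r',
\qquad
\bar Y = \begin{bmatrix} 1 & \mathbf 0 \\ \mathbf 0 & Y' \end{bmatrix} \in \Y_r'.
\]
Then $\bar Y\bar X = I_r$ (its bottom-right block being $Y'X' = I_{r-1}$), so $(\bar X,\bar Y)$ is an edge of $\Delta_{n,r}'$ and is therefore already blue by ($\dag$); moreover $\bar X$ lies in the same region as $X$ and $\bar Y$ in the same region as $Y$. The decisive observation is that $(\bar X, Y)$ is \emph{also} an edge: a direct computation gives
\[
Y\bar X = \begin{bmatrix} 1 & \mathbf y X' \\ \mathbf 0 & Y'X' \end{bmatrix} = I_r,
\]
where the top-right block vanishes precisely because $\mathbf y X' = \mathbf 0$.

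With the three edges $(\bar X,\bar Y)$, $(\bar X, Y)$ and $(X,Y)$ in hand, the conclusion follows by transporting blueness twice. As $\bar Y$ and $Y$ lie in the same region and $(\bar X,\bar Y)$ is blue, Lemma~\ref{lem_useful}(ii) turns $(\bar X,Y)$ blue; as $\bar X$ and $X$ lie in the same region and $(\bar X, Y)$ is now blue, Lemma~\ref{lem_useful}(i) turns $(X,Y)$ blue. I expect the only subtle point to be isolating $\mathbf y X' = \mathbf 0$ from $YX = I_r$ and recognising that this is exactly the condition making $(\bar X, Y)$ an edge; it is what renders the $\Delta_{n,r}'$-clean-up of $Y$ compatible with the original $Y$. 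The region bookkeeping and the two invocations of Lemma~\ref{lem_useful} are then routine.
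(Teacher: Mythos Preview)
Your proof is correct and is essentially the paper's own argument: your $\bar X,\bar Y$ are precisely the matrices the paper calls $X',Y'$ (obtained by replacing the first column of $X$ and the first row of $Y$ by the appropriate unit vector), and the appeal to $(\dag)$ together with Lemma~\ref{lem_useful} is identical. The only cosmetic difference is that the paper verifies all four products $YX,\;Y'X,\;YX',\;Y'X'$ equal $I_r$ and finishes with an elementary edge colour transformation on the resulting square, whereas you chain Lemma~\ref{lem_useful}(ii) and then (i), bypassing the need to check $\bar Y X = I_r$; this is a harmless economy.
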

\begin{proof}

Let $(X,Y)$ be an edge in the subgraph \eqref{eqn_1start}.
Let $X'$ be the matrix obtained by replacing the first column of $X$ by the $n \times 1$ vector $ [1,0,0,\ldots,0]^T $
and let $Y'$ be the matrix obtained by replacing the first row of $Y$ by the $1 \times n$ vector $ [1,0,0,\ldots,0]. $
Note that $Y'$ is still a matrix in the set $\Y_r$ (that is, it is still a RRE rank $r$ matrix) and $Y'$ belongs to the
same region as $Y$. Similarly $X' \in \X_r$ and $X'$ belongs to the same region as $X$. Since $YX = I_r$, it follows
from the way that $Y'$ and $X'$ have been defined that $YX' = I_r$, $Y'X = I_r$ and $Y'X'=I_r$. Therefore the vertices
$\{ X,X',Y,Y' \}$ induce a square in $\Delta_{n,r}$. Now the edge $(X',Y')$ belongs to $\Delta_{n,r}'$ and hence may be
turned blue by induction ($\dagger$). Since $(X',Y')$ is blue, and $X$ and $X'$ belong to the same region, it follows
from Lemma~\ref{lem_useful}(i) that $(X,Y')$ may be turned blue. Dually, since $Y$ and $Y'$ belong to the same region,
$(X',Y)$ may be turned blue. Therefore the remaining edge $(X,Y)$ in the square may be turned blue, completing the
proof of the lemma. \end{proof}

\begin{lem}
\label{lem_Ireduction2} Every edge $(X,Y)$ in the subgraph
\begin{align}\label{eqn_firstsame}
({i_1} < i_2 < \cdots < i_r)
\times
({i_1} < j_2 < \cdots < j_r)
\end{align}
can be turned blue.
\end{lem}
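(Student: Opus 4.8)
The statement is the common-first-index analogue of Lemma~\ref{lem_ones}, which disposes of the case $i_1=1$, so the natural plan is to keep the shared first index $i_1$ fixed and to induct on the ``distance'' between the two tails $(i_2<\cdots<i_r)$ and $(j_2<\cdots<j_r)$, for instance on $\sum_{k=2}^{r}|i_k-j_k|$. When this distance is zero the two regions coincide and the subgraph \eqref{eqn_firstsame} is the diagonal region $(i_1<i_2<\cdots<i_r)\times(i_1<i_2<\cdots<i_r)$, every edge of which can be turned blue by Lemma~\ref{lem_finishingoff}(i); this furnishes the base case. (One could instead induct on $i_1$ with Lemma~\ref{lem_ones} as the base, but the tail-distance induction is preferable because it keeps the awkward cross-edges that arise below \emph{inside} the scope of the present lemma, rather than producing edges with differing first indices and differing tails, which none of the earlier lemmas cover.)

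For the inductive step I would first show that the subgraph \eqref{eqn_firstsame} lies in a single connected component, so that by Lemma~\ref{lem_connectedregions} it suffices to turn one conveniently chosen edge of it blue; connectivity I would establish exactly as in the proofs of Lemmas~\ref{lem_closer} and \ref{lem_finishingoff}(ii), by exhibiting a hub vertex whose incident edges meet every other edge of the region and using that the region is bipartite. To turn a single edge blue, I would build a square in $\Delta_{n,r}$ on four scattered-identity matrices that differ only in the $k$-th slot, moving one coordinate $j_k$ of the $Y$-tail one step toward $i_k$ while leaving the shared pivot $i_1$ and all other slots fixed; since every edge of $\Delta_{n,r}$ records an occurrence of $I_r$, such a square is automatically a rectangular band, hence singular by Theorem~\ref{rectsing}, so it supports an elementary edge colour transformation. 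This mirrors the four-vertex square used in the inductive step of Lemma~\ref{lem_closer}, but applied to the $k$-th coordinate rather than the first. The three auxiliary edges of the square then lie either in subgraphs of the same form \eqref{eqn_firstsame} but of strictly smaller tail-distance (covered by the inductive hypothesis) or in same-tail regions covered by Lemmas~\ref{lem_closer} and \ref{lem_finishingoff}; hence the fourth, target edge can be recoloured, and Lemma~\ref{lem_connectedregions} then propagates blue to the whole region.

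The main obstacle is the explicit construction of the recolouring square: one must select the four scattered-identity matrices so that (a) each is genuinely in reduced row echelon form after the single-coordinate shift, which requires checking that the fixed first pivot $i_1$ never collides with the shifted or doubled $k$-th pivot; (b) all four matrix products equal $I_r$, so that all four edges are actually present in $\Delta_{n,r}$; and (c) each of the three auxiliary edges provably falls into a region already treated. Verifying (a)--(c) is a careful but routine book-keeping exercise in the $I(\cdots)$ notation, entirely analogous to the computations already performed in Lemmas~\ref{lem_finishingoff} and \ref{lem_closer}; the only genuinely new feature is the inert shared pivot $i_1$, which must be carried unchanged through every vertex of every square.
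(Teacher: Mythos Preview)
Your plan differs substantially from the paper's and, as written, contains a genuine gap in the connectivity step.

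The paper does \emph{not} induct on the tail distance. Instead, for $i_1>1$ it takes the given edge $(X,Y)$ and modifies $X$ and $Y$ directly: replacing the first row of each by the unit vector with $1$ in position~$1$ produces $X'\in(1<i_2<\cdots<i_r)$ and $Y'\in(1<j_2<\cdots<j_r)$. Two carefully chosen squares (one involving $X'$ and a suitable $I(1,i_1\mid j_2\mid\cdots\mid j_r)^T$, the other involving $X,X'$ and $I(i_1\mid i_2\mid\cdots\mid i_r)$) then reduce the problem to edges lying in regions covered by Lemma~\ref{lem_ones} (first index $1$) and Lemma~\ref{lem_closer} (same tail, different first index). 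Crucially, the paper works with an \emph{arbitrary} edge $(X,Y)$ throughout and never needs the region \eqref{eqn_firstsame} to be connected.

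Your argument does need connectivity, and your proposed hub argument fails. Take $r=2$, $n\ge 7$, and the region $(2<4)\times(2<6)$ over any division ring with more than two elements. A vertex $X$ in the $\X$-part has $X^T=\left[\begin{smallmatrix}0&1&a&0&b&c&d\\ 0&0&0&1&e&f&g\end{smallmatrix}\right]$, and for $Y=\left[\begin{smallmatrix}0&1&p&q&s&0&t\\ 0&0&0&0&0&1&u\end{smallmatrix}\right]$ one computes $(YX)_{12}=q+se+tg$. The vertices $Y_1$ with $(q,s,t)=(0,1,0)$ and $Y_3$ with $(q,s,t)=(1,1,0)$ both have neighbours in the $\X$-part, but any common neighbour would need $e=0$ and $e=-1$ simultaneously. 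A dual computation with $(YX)_{21}=c+ud$ rules out a hub in the $\Y$-part. So no single hub vertex exists, contrary to what you assert; the region may still be connected, but that would require a separate and non-trivial argument that you have not supplied. Without connectivity, turning one scattered-identity edge blue does not propagate to the arbitrary edge $(X,Y)$, and your square built from scattered-identity matrices cannot be linked to it. The paper sidesteps this entirely by building its squares \emph{from} the given $X$ and $Y$; that is the idea your proposal is missing.
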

\begin{proof}
Let $(X,Y)$ be an edge in the subgraph \eqref{eqn_firstsame}. If $i_1=1$ we are done by Lemma~\ref{lem_ones}, so
suppose $i_1
> 1$. Let $X'$ be the matrix obtained by replacing the first row of $X$ by the $1 \times r$ vector $ [1,0,0\ldots,0] $,
and let $Y'$ be the matrix obtained by replacing the first row of $Y$ by the $1 \times n$ vector $ [1,0,0\ldots,0] $.
Note that since the $i_1$th column of $Y$ is the $r \times 1$ vector $[1,0,0,\ldots,0]^T$, and since $i_1 > 1$, this
transformation means that the $i_1$th column of $Y'$ is the zero vector. Clearly $Y' \in \Y_r$ and $X' \in \X_r$.
Since $i_1>1$ it follows that $YX' = YX = I_r$. This in turn, along with the definition of $Y'$, implies $Y'X'=YX'=I_r$.
Therefore each of $(X,Y)$, $(X',Y)$ and $(X',Y')$ is an edge in $\Delta_{n,r}$ (while $(X,Y')$ is not an edge since
$Y'X \neq I_r$).

Next consider the subgraph of $\Delta_{n,r}$ induced by the four vertices
\begin{align*}
\Y_r: & & Y & & & Y' \\
\X_r: & & X' & & & I(1,i_1|j_2|j_3|\cdots|j_r)^T.
\end{align*}
Straightforward computations show that these four vertices form a square in $\Delta_{n,r}$. In this square, both of the
edges $(X',Y')$ and $(I(1,i_1|j_2|j_3|\cdots|j_r)^T, Y')$ may be turned blue by Lemma~\ref{lem_ones}, while the edge
$(I(1,i_1|j_2|j_3|\cdots|j_r)^T, Y)$ belongs to the subgraph
\[
(1 < {j_2 < j_3 < \cdots < j_r})
\times
(i_1 < {j_2 < j_3 < \cdots < j_r}),
\]
and so can be turned blue by Lemma~\ref{lem_closer}.
Therefore we deduce that the edge $(X',Y)$ may be turned blue.

Finally consider the subgraph of $\Delta_{n,r}$ induced by the four vertices
\begin{align*}
\Y_r: & & Y & & & I(i_1|i_2|i_3|\cdots|i_r) \\
\X_r: & & X & & & X'.
\end{align*}
Again, it is easily verified that this set of vertices induces a square in $\Delta_{n,r}$. We saw above that the edge $(X',Y)$ may be turned blue.
The edge
$
(X,I(i_1|i_2|i_3|\cdots|i_r))
$
belongs to a diagonal region and so may be turned blue by Lemma~\ref{lem_finishingoff}(i), while
the edge $ (X',I(i_1|i_2|i_3|\cdots|i_r)) $ belongs to the subgraph
\[
(1 < {i_2 < \cdots < i_r})
\times
(i_1 < {i_2 < \cdots < i_r}),
\]
and so may be turned blue by Lemma~\ref{lem_closer}. Since three of the four edges of the square can be turned blue, we deduce that the fourth edge $(X,Y)$ may be turned blue, completing the proof of the lemma.
\end{proof}
We are now in a position to complete the proof of the main result of this section.

\begin{proof}[Proof of Proposition~\ref{lem_bipartitecolours}]
Let $(X,Y)$ be an arbitrary edge of $\Delta_{n,r}$, where $(X,Y)$ belongs to
\[
(i_1 < i_2 < \cdots < i_r)
\times
(j_1 < j_2 < \cdots < j_r),
\]
say. If $i_1 = j_1$ we are done by Lemma~\ref{lem_Ireduction2}, so suppose $i_1 > j_1$ (the other case may be dealt
with using a dual argument). Let $X'$ be the matrix obtained by replacing the first column of $X$ by the $n \times 1$
vector $ [0,0,\ldots,0,1,0,\ldots,0]^T $ with $1$ in position $j_1$ and $0$s elsewhere. Note that since row $i_1$ of
$X$ is the $1 \times r$ vector $ [1,0,0,\ldots,0], $ it follows that row $i_1$ of $X'$ is the zero vector. Clearly
since $i_1 > j_1$ it follows that $X' \in \X_r$. Now consider the subgraph of $\Delta_{n,r}$ induced by the four
vertices
\begin{align*}
\Y_r: & & Y & & & I(j_1, i_1|i_2|i_3|\cdots|i_r) \\
\X_r: & & X & & & X'.
\end{align*}
From the definition of $X'$ it follows that $YX' = YX = I_r$, and it is then easily checked that
these four vertices induce a square in $\Delta_{n,r}$.
The edge \[(X,I(j_1, i_1|i_2|i_3|\cdots|i_r)) \] belongs to
\[
(i_1 < {i_2 < \cdots < i_r})
\times
(j_1 < {i_2 < \cdots < i_r})
\]
and so may be turned blue by Lemma~\ref{lem_closer}. The edge $ (X',I(j_1, i_1|i_2|i_3|\cdots|i_r)) $ belongs to
\[
(j_1 < i_2 < \cdots < i_r)
\times
(j_1 < i_2 < \cdots < i_r),
\]
a diagonal region, and so may be turned blue by Lemma~\ref{lem_finishingoff}(i). Finally, the edge $ (X',Y) $ belongs
to
\[
({j_1} < i_2 < \cdots < i_r)
\times
({j_1} < j_2 < \cdots < j_r)
\]
and so may be turned blue by Lemma~\ref{lem_Ireduction2}.
Since all three of these edges may be turned blue we deduce that the remaining edge $(X,Y)$ of this square may be turned blue, completing the proof of the proposition.
\end{proof}

\section{Combinatorial Properties of Multiplication Tables}
\label{sec_multtables}

From the explanation of Stage~2 of the proof of our main result given in
Section~\ref{sec_outline} it may be seen that establishing this part of the proof comes down to the combinatorial analysis of the structure matrix $P_r$ for the Rees matrix representation of $D_r^0$.
Even though in the statement of the main result Theorem~\ref{thm_main} we insist that $r < n/3$, all of the results in this section will be proved under the weaker assumption that $1 \leq r < n/2$ and this assumption about the relationship between $n$ and $r$ will remain in place throughout the section.

Recall that $P_r = (P_r(Y,X))$ is a matrix with rows indexed by $\Y_r$, columns by $\X_r$, and entries $P_r(Y,X) = YX$
if $YX$ is of rank $r$, and $0$ otherwise. So the entries of $P_r$ come from the set $GL_r(Q) \cup \{ 0 \}$. We also
view the abstract generators $\mathcal{F}$ given in \eqref{eqn_generators} as being arranged in a table also with rows
indexed by $\Y_r$, columns indexed by $\X_r$ and the entry $(Y,X)$ is $f_{X,Y}$ if $P_r(Y,X) \neq 0$ (i.e. if $f_{X,Y}
\in \mathcal{F}$) and $0$ otherwise. So far, using the defining relations \eqref{eqn_middle}--\eqref{eqn_bottom} from the
presentation $\mathcal{P}_{r,n}$ we have been making deductions about relations between the symbols $f_{X,Y}$ appearing
in this table. The results from the previous section show that we may deduce $f_{X,Y}=1$ whenever the corresponding
entry $P_r(Y,X)$ of $P_r$ satisfies $P_r(Y,X)=I_r$. That was Stage~1 of the proof. Now we move on to consider Stage~2
of the proof. In this stage our aim is to prove that for any pair of non-zero entries $P_r(Y,X)$ and $P_r(Y',X')$ from
the structure matrix $P_r$, if $P_r(Y,X) = P_r(Y',X')$ then $f_{X,Y}=f_{X',Y'}$ may be deduced from
\eqref{eqn_middle}--\eqref{eqn_bottom}.

As we did for the bipartite graph $\Delta_{n,r}$ in the previous section, we shall partition the matrix $P_r$ into
regions
\begin{align}
\label{eqn_region2}
(i_1 < i_2 < \cdots < i_r)
\times
(j_1 < j_2 < \cdots < j_r)
\end{align}
where the region \eqref{eqn_region2} is the set of all pairs $(Y,X)$ with $Y \in \Y_r$, $X \in \X_r$, $LC(Y) = \{ i_1,
i_2, \ldots, i_r \}$ and $LR(X) = \{ j_1, j_2, \ldots, j_r \}$. By the entries in the region \eqref{eqn_region2} we
mean the set of all matrices $YX$
with rank $r$
where $(Y,X)$ belongs to the region \eqref{eqn_region2}. In this section we focus our
attention just on the region
\begin{align}
\label{eqn_region}
(1 < 2 < \cdots < r)
\times
(1 < 2 < \cdots < r),
\end{align}
whose entries are those of the form:
\[
YX = [I_r|A]\left[\frac{I_r}{B}\right] = I_r + AB,
\]
where $A \in \mathrm{Mat}_{r \times (n-r-1)}(Q)$ and $B \in \mathrm{Mat}_{(n-r-1) \times r}(Q)$.
The aim of this section is to prove the following result.
\begin{lem}
\label{lem_topleft}
Let $n$ and $r$ be positive integers with $1 \leq r < n/2$.
Let $Y, Y' \in \Y_r$ and $X, X' \in \X_r$ with $LC(Y) = LC(Y') = LR(X) = LR(X') = \{1,2,\ldots, r\}$.
If $P_r(Y,X) = P_r(Y',X') \neq 0$ then $f_{X,Y}=f_{X',Y'}$ is a consequence of the relations \eqref{eqn_middle}--\eqref{eqn_bottom}.
\end{lem}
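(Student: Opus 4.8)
The plan is to recast the statement as a connectivity problem for strong paths and then to solve it by a small number of universally available singular squares. Throughout we work inside the region \eqref{eqn_region}, so every $Y\in\Y_r$ occurring has the form $Y_A=[\,I_r\mid A\,]$ and every $X\in\X_r$ the form $X_B=\left[\begin{smallmatrix}I_r\\ B\end{smallmatrix}\right]$, with $P_r(Y_A,X_B)=I_r+AB$. Fix a value $V$ with $I_r+V\in GL_r(\F)$; the hypothesis of the lemma presents two generators $f_{X_B,Y_A}$, $f_{X_{B'},Y_{A'}}$ with $AB=A'B'=V$, and by the strong-path principle of Section~\ref{sec_outline} it is enough to join them by a chain of generators, all of value $I_r+V$, in which consecutive terms share a row or a column and complete to a singular square whose other two (parallel) entries are both $I_r$, hence both equal to $1$ by Lemma~\ref{lem_hash}; relation \eqref{eqn_bottom} then forces consecutive terms to be equal. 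If $V=0$ the value is $I_r$ and both generators already equal $1$ by Lemma~\ref{lem_hash}, so we may assume $V\neq0$ (whence $A,B,A',B'$ are all nonzero).

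The whole argument is powered by the two base vertices $X_0=I(1\mid2\mid\cdots\mid r)^T$ and $Y_0=I(1\mid2\mid\cdots\mid r)$ of the region. Since $Y_A X_0=I_r$ and $Y_0 X_B=I_r$ for every $A$ and $B$, Lemma~\ref{lem_hash} gives $f_{X_0,Y}=1$ and $f_{X,Y_0}=1$ whenever these symbols occur. Hence: (a) with $B$ fixed, the quadruple $(X_B,X_0,Y_A,Y_{A'})$ is a singular square whenever $A'B=AB$ (by Theorem~\ref{rectsing} and \eqref{eq_99}, the complementary column $X_0$ carrying two entries $I_r$), and so yields $f_{X_B,Y_A}=f_{X_B,Y_{A'}}$; dually (b) with $A$ fixed, $(X_B,X_{B'},Y_A,Y_0)$ yields $f_{X_B,Y_A}=f_{X_{B'},Y_A}$ whenever $AB'=AB$. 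Indeed, because the complementary entries equal $I_r$, the rectangular-band condition \eqref{eq_99} collapses to equality of the two values, which holds within the fibre, and non-degeneracy is immediate once $V\neq0$. Thus, remaining at value $V$, \emph{we may replace $A$ by any $A'$ with $A'B=V$, and $B$ by any $B'$ with $AB'=V$.}

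It remains to show that these two ``free'' moves connect the whole fibre $\mathcal{G}_V=\{(A,B):AB=V\}$; this is where the hypothesis $r<n/2$ is used, providing the extra room beyond $r$ coordinates that makes the kernels involved nontrivial and the moves genuinely live. The cleanest attempt is a single bridge $(A,B)\sim(A,B'')\sim(A',B'')\sim(A',B')$ via (a) and (b), using a common $B''$ with $AB''=A'B''=V$, that is $\left[\begin{smallmatrix}A\\ A'\end{smallmatrix}\right]B''=\left[\begin{smallmatrix}V\\ V\end{smallmatrix}\right]$. Such a $B''$ exists once $\left[\begin{smallmatrix}A\\ A'\end{smallmatrix}\right]$ has full row rank, for which the stronger range $r<n/3$ leaves enough room, but which may fail when only $r<n/2$ is assumed. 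In the full range treated here one has only a single spare coordinate, so a direct bridge need not exist and one must instead chain several moves, reducing an arbitrary $(A,B)$ to a fixed normal-form factorisation of $V$ by a Gauss--Jordan-style elimination carried out through the cosets $\{A':A'B=V\}$ and $\{B':AB'=V\}$ (for full-rank $V$ this is just the transitivity of the action $g\cdot(A,B)=(Ag^{-1},gB)$ of the invertible matrices of the inner size on factorisations, realised by the moves; the general case follows by induction on $\rank{V}$).

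The main obstacle is exactly this connectivity step in the range $n/3\le r<n/2$, where no single bridge is available and the path must be assembled from several elementary moves, each adjusting only $A$ or only $B$: coordinating the two move-types with a single coordinate of slack is the delicate point, and I expect it to be handled by an induction on $\rank{V}$ (or, where the in-region moves run out, by routing through auxiliary generators of the same value $I_r+V$ in neighbouring regions, again joined by singular squares whose parallel entries are $I_r$). The remaining checks -- that each quadruple constructed is a non-degenerate rectangular band and that every intermediate entry $I_r+(\,\cdot\,)$ is invertible -- are routine from \eqref{eq_99} and the RRE normalisation.
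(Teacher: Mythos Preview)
Your reduction is correct and matches the paper: writing $Y_A=[I_r\mid A]$, $X_B=\left[\begin{smallmatrix}I_r\\ B\end{smallmatrix}\right]$, the two moves (a) and (b) are exactly the edges of the $\lambda$-graph of $V=AB$ in the table $\mathbb{T}_{n-r,r}$ of all products of $r\times(n-r)$ by $(n-r)\times r$ matrices, and each such edge does give a singular square with the scattered identity $I(1|\cdots|r)$ supplying the two parallel $I_r$ entries. So the lemma reduces precisely to the assertion that this $\lambda$-graph is connected for every $V\in M_r(Q)$ when $n-r>r$.

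The gap is that you do not prove this connectivity. Your single-bridge argument, as you yourself note, needs $2r$ columns of slack and so only works for $r<n/3$. Your appeal to ``transitivity of the action $g\cdot(A,B)=(Ag^{-1},gB)$'' is not a proof either: even for invertible $V$ the action of $GL_{n-r}(Q)$ on factorisations $AB=V$ is \emph{not} transitive in general (from $A'g=A$ one only gets $A'(gB-B')=0$, not $gB=B'$), and in any case you would still owe an argument that each $g$ can be realised as a composite of moves of type (a) and (b). The sentence ``the general case follows by induction on $\rank V$'' and the final paragraph (``I expect it to be handled by\ldots'') are programme, not proof; and the suggested escape route through neighbouring regions would leave the region $(1<\cdots<r)\times(1<\cdots<r)$ and hence the scope of the lemma.

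In the paper this connectivity is exactly Theorem~\ref{thm_multtables}(i), proved by a genuine double induction on $(k,m)$: one peels off the first column/row pair $(\alpha,\beta)$, uses regularity of $M_k(Q)$ to force the trailing $k\times k$ block of $B$ invertible, absorbs $\alpha\beta$ into that block, and descends to $\mathbb{T}_{m-1,k}$; the boundary case $k=m-1$ requires a further trick reducing to the non-invertible case (ii) in $\mathbb{T}_{m-1,m-1}$. None of this is visible in your sketch. To complete your argument you must either reproduce that induction or give an alternative proof that the fibre $\{(A,B):AB=V\}\subseteq M_{r\times(n-r)}\times M_{(n-r)\times r}$ is connected under moves (a) and (b) whenever $n-r>r$.
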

As in the previous section, we shall find it useful to recast this problem in purely combinatorial terms before solving it. We begin by introducing a general framework, and some terminology, for the analysis of combinatorial properties of tables.

Let $P = P(B,A)$ be a matrix with rows indexed by a set $\mathcal{B}$ and columns indexed by $\mathcal{A}$, where the
entries of $P$ all come from a set $L$, that we call the set of labels. Then given an element $l \in L$ we define a
graph, called the \emph{$\lambda$-graph of $l$}, with

\

\noindent \textbf{Vertices:} $\{ (B,A) \in \mathcal{B} \times \mathcal{A} : P(B,A) = l \}$: the set of all coordinates with label $l$, and

\

\noindent \textbf{Edges:} $(B,A)$ and $(B',A')$ are joined by an edge if and only if $B=B'$ or $A=A'$.

\

\noindent So, the $\lambda$-graph of $l \in L$ is obtained by removing all entries from the matrix except occurrences of the symbol $l$, and then drawing an edge between every pair of $l$s that belong to the same row, or to the same column.

Now, one natural source of such matrices is given by the multiplication tables of semigroups, where given a semigroup $S$ we take $\mathcal{A} = \mathcal{B} = L = S$ and define the entry $P(s,t) = st$. Let us briefly think about how $\lambda$-graphs behave in this situation. If $S$ happens to be a group, $S=G$, then this matrix is a Latin square and so (unless the group is trivial) for every $g \in L=G$ the $\lambda$-graph of $g$ will not be connected (in fact it will not have any edges at all).
On the other hand, if $S$ is a semigroup with a zero element $0 \in S$, then since in the multiplication table the row labelled by $0$ (and dually column labelled by $0$) contains all zeros, it is clear that in this case the $\lambda$-graph of $0$ in the multiplication table is connected.
Now suppose that $S$ is monoid with a non-trivial group of units such that the set of non-invertible elements of $S$ forms an ideal of $S$ (for example, the semigroup $M_n(Q)$ has this property). Then since here a product $st$ is invertible if and only if both $s$ and $t$ are, by the same reasoning as for groups above, the $\lambda$-graphs of the invertible elements $s \in S$ (i.e. those elements from the group of units of $S$) will not be connected. So for such a semigroup the most one could hope for would be for the $\lambda$-graphs of every non-invertible element to be connected. As we shall see below, this is exactly what happens in
the multiplication table of the semigroup $M_n(Q)$. In fact we show rather more than this.
\begin{thm}
\label{thm_multtables} Let $k$ and $m$ be positive integers with $k \leq m$, let $\mathcal{B}$ be the set of all $k
\times m$ matrices over a division ring $Q$, $\mathcal{A}$ be the set of $m \times k$ matrices over $Q$, and let
$\mathbb{T}_{m,k} = \mathbb{T}_{m,k}(B,A)$ be the matrix with entries $BA \in M_k(Q)$ where $B \in \mathcal{B}$ and $A
\in \mathcal{A}$. Let $K \in M_k(Q)$ be arbitrary.
\begin{enumerate}
\item[(i)] If $k <m$ then the $\lambda$-graph of $K$ in $\mathbb{T}_{m,k}$ is connected.
\item[(ii)] If $k=m$ and $K$ is non-invertible then the $\lambda$-graph of $K$ in $\mathbb{T}_{m,k}$ is connected.
\end{enumerate}
\end{thm}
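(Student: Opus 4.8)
The plan is to first reformulate the statement as the connectivity of a bipartite graph, then strip away the inessential rank, and finally reduce to a clean question about complementary subspaces into which the hypothesis enters. To begin, I would observe that the $\lambda$-graph of $K$ is precisely the line graph of the bipartite graph $H_K$ whose left vertices are the $k\times m$ matrices $B$ with $\Coll K\subseteq\Coll B$, whose right vertices are the $m\times k$ matrices $A$ with $\Roww K\subseteq\Roww A$, and in which $B$ and $A$ are joined exactly when $BA=K$. Indeed, a vertex $(B,A)$ of the $\lambda$-graph \emph{is} an edge of $H_K$, and two of them are adjacent in the $\lambda$-graph iff they share a $B$ or an $A$, i.e.\ iff the corresponding edges of $H_K$ meet; equivalently, all factorizations sharing a fixed $B$ (or a fixed $A$) form a clique, so moving along a row or column of $\mathbb{T}_{m,k}$ traverses a star of $H_K$. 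Since $H_K$ has no isolated vertices and at least one edge (e.g.\ $K=[I_k\mid 0]\left[\begin{smallmatrix}K\\0\end{smallmatrix}\right]$, valid as $m\ge k$), its line graph is connected iff $H_K$ is. So everything reduces to showing that $H_K$ is connected.

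Next I would remove superfluous rank. Writing $\rho=\rank{K}$ and fixing a rank factorisation $K=CD$ with $C$ of size $k\times\rho$ and $D$ of size $\rho\times k$, both of rank $\rho$, I claim that every factorization $K=BA$ is joined in $H_K$ to a \emph{reduced} one, meaning one with $\rank{B}=\rank{A}=\rho$ (equivalently $\Coll B=\Coll K$ and $\Roww A=\Roww K$). This is pure linear algebra: since $\Coll C=\Coll K\subseteq\Coll B$ we may write $C=BR$, and then $A_1:=RD$ satisfies $BA_1=K$ with $\rank{A_1}=\rho$, so $(B,A)$ and $(B,A_1)$ are adjacent; a dual move fixing $A_1$ then replaces $B$ by a rank-$\rho$ factor. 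Hence it suffices to connect any two reduced factorizations.

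Finally I would coordinatise the reduced factorizations. Writing $B=CM$ and $A=ND$ with $M$ of size $\rho\times m$ and $N$ of size $m\times\rho$, the identity $CMND=CD$ together with cancellation by $C$ (full column rank) and $D$ (full row rank) forces $MN=I_\rho$. Thus reduced factorizations correspond bijectively to pairs $(M,N)$ with $M$ a surjection $Q^m\onto Q^\rho$, $N$ a section of it, and $Q^m=\ker M\oplus\Coll N$; two such are adjacent exactly when they share $M$ or share $N$. The crucial point is that both hypotheses of the theorem are equivalent to $\rho<m$ (in case (i), $\rho\le k<m$; in case (ii), $\rho<k=m$), which is precisely the condition $\ker M\neq 0$ giving room to move. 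I would connect an arbitrary pair to the standard pair $(M_0,N_0)$, with $M_0=[I_\rho\mid 0]$ and $N_0=\left[\begin{smallmatrix}I_\rho\\0\end{smallmatrix}\right]$, by induction on $\rank{M-M_0}$: peeling one rank-one summand off $M-M_0$ yields a nearby surjection $M'$ with $\rank{M'-M_0}$ smaller, and $M,M'$ then admit a common section $N^\ast$ (making $(M,N)\to(M,N^\ast)\to(M',N^\ast)\to(M',N')$ a legal path), since a common section exists whenever $\ker(M-M')+\ker M=Q^m$, which for a rank-one perturbation means only that its kernel hyperplane does not contain $\ker M$. As every pair $(M_0,N)$ is adjacent to every other, both endpoints land in a single clique and are thereby connected.

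I expect the genuine obstacle to be this last step: arranging a single rank-one move that simultaneously \emph{decreases} $\rank{M-M_0}$, \emph{preserves} surjectivity of $M'$, and \emph{retains} a common section (the condition on $\ker M$). The earlier reductions are formal, but here the slack $m-\rho\ge 1$ must be spent deliberately, choosing the row part of the peeled rank-one piece so that its kernel avoids $\ker M$ while still lowering the distance to $M_0$. It is exactly this slack that vanishes when $k=m$ and $K$ is invertible, the excluded case, where $\ker M=0$ and the $\lambda$-graph is edgeless.
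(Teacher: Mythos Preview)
Your reduction is clean and genuinely different from the paper's argument: the paper never passes to the line graph or to the parameter $\rho=\rank K$, but instead runs a direct induction on $k+m$, stripping a first column/row to land in a $(0,0)$-region isomorphic to $\mathbb{T}_{m-1,k}$ (or, in the square case, conjugating $K$ to the standard rank-$l$ idempotent and dropping into a copy of $\mathbb{T}_{m,l}$). Your approach isolates the single invariant $\rho<m$ that governs both parts and reduces everything to the connectivity of the graph on pairs $(M,N)$ with $MN=I_\rho$; this is more conceptual and explains transparently why the invertible case in (ii) is excluded.

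There is, however, a real gap in your final step. A rank-one peel $uv$ lowers $\rank(M-M_0)$ only if $v\in\Roww(M-M_0)$ (and $u\in\Coll(M-M_0)$); this is forced by the usual rank-one perturbation inequality. But your common-section condition requires $\ker v\not\supseteq\ker M$, equivalently $v\notin\Roww M$. These are incompatible precisely when $\Roww(M-M_0)\subseteq\Roww M$, which happens exactly when $\Roww M=\Roww M_0$, i.e.\ $M=[G\mid 0]$ for some $G\in GL_\rho(Q)$. Concretely, with $\rho=1$, $m=2$, $M=[2\ 0]$ and $M_0=[1\ 0]$, one has $M-M_0=[1\ 0]$, and every admissible $v$ is a scalar multiple of $[1\ 0]\in\Roww M$; indeed $M$ and $M_0$ share no section at all (a section of $M$ has first entry $1/2$, of $M_0$ first entry $1$). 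So the induction on $\rank(M-M_0)$ as stated stalls.

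The repair is easy but must be stated: when $M=[G\mid 0]$, first step \emph{sideways} to $M'=M+u\,e_{\rho+1}$ for any $u\neq 0$. Then $M-M'=-u\,e_{\rho+1}$ with $e_{\rho+1}\notin\Roww M$, so $M\sim M'$, and $M'$ has $\Roww M'\neq\Roww M_0$; from $M'$ the rank-lowering peel is now available. Alternatively, one can avoid the induction entirely by observing (dually, on the $N$ side) that $N\sim N'$ whenever $\Coll N\cap\Coll N'=0$, and then connecting any $N$ to $N_0$ through an intermediate $N''$ whose column space meets both $\Coll N$ and $\Coll N_0$ trivially; over a finite division ring this needs a short counting or two-step argument. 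Either way your strategy goes through, but the proposal as written does not yet handle the degenerate case $\Roww M=\Roww M_0$.
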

It should be noted that, in contrast to the Rees structure matrix $P_r$, in the matrix $\mathbb{T}_{m,k}$ the index
sets $\mathcal{A}$ and $\mathcal{B}$ range over \emph{all possible matrices}, not just those in RRE form, and all products
$BA$ are recorded in the table, including those with rank less than $k$.

Theorem~\ref{thm_multtables} is a general result which is possibly of independent interest. It might be of interest to explore more which semigroups have multiplication tables with this property, and whether there is some general connection between semigroups with this property and those for which the maximal subgroups of $IG(E)$ are well behaved.

Before proving Theorem~\ref{thm_multtables} let us see how it can be used to obtain Lemma~\ref{lem_topleft} as a corollary.
Clearly
\[
\{ Y \in \Y_r : LC(Y) = \{1,2,\ldots, r\} \}
=
\{
\left[%
\begin{array}{cc}
  I_{r} & \bar{Y}
\end{array}%
\right]: \bar{Y} \in M_{r \times (n-r)}
\}
\]
and the natural map $Y \mapsto \bar{Y}$ where $Y = [I_r \; \bar{Y}]$ defines a bijection between the set of $Y \in \Y_r$ with
$LC(Y) = \{1,2,\ldots, r\}$ and the set $M_{r \times (n-r)}$ of all $r \times (n-r)$ matrices over $Q$.
Also, for $Y \in \Y_r$ and $X \in \X_r$, with $LC(Y) = LR(X) = \{1,2,\ldots, r\}$, writing
\[
Y =
\left[%
\begin{array}{cc}
  I_{r} & \bar{Y}
\end{array}%
\right],
\quad
X =
\left[%
\begin{array}{c}
  I_{r} \\ \bar{X}
\end{array}%
\right]
\]
we have
$
YX = I_r + \bar{Y}\bar{X}.
$
Thus for every pair $(Y,X), (Y',X') \in \Y_r \times \X_r$,
with $LC(Y) = LC(Y') = LR(X) = LR(X') = \{1,2,\ldots, r\}$,
we have
\begin{align}
\label{eqn_connection}
YX = Y'X'
\Leftrightarrow
\bar{Y}\bar{X} = \bar{Y'}\bar{X'}.
\end{align}

\begin{proof}[Proof of Lemma~\ref{lem_topleft}]
Let $(Y,X)$ be an arbitrary pair in the region
\begin{align}
\label{eqn_thecomponent} (1 < 2 < \cdots < r) \times (1 < 2 < \cdots < r)
\end{align}
such that $P_r(Y,X) \neq 0$, that is, $\rank{YX} = r$.
It follows from Theorem~\ref{thm_multtables}, with $k=r$ and $m=n-r > r = k$ (since by assumption $r < n/2$),
that the $\lambda$-graph of $\bar{Y}\bar{X}$ in $\mathbb{T}_{m,k} = \mathbb{T}_{n-r,r}$ is connected. But then from
\eqref{eqn_connection} it follows that the $\lambda$-graph of $YX$ in the region \eqref{eqn_thecomponent}
is connected. Since $(Y,X)$ were arbitrary we obtain that for every such entry $YX$ in the region
\eqref{eqn_thecomponent} the $\lambda$-graph of this matrix in the component \eqref{eqn_thecomponent} is connected.

Now let $Y, Y' \in \Y_r$ and $X, X' \in \X_r$ with $LC(Y) = LC(Y') = LR(X) = LR(X') = \{1,2,\ldots, r\}$ and $P_r(Y,X)
= P_r(Y',X') \neq 0$.

If $Y = Y'$ then
\[
\begin{array}{cl||cc}
    &           &       X
    &           X'

    \\ \hline\hline
    & & &  \\
& Y &       YX      &   YX' \\
 & & &   \\
& I(1|2| \cdots |r)
&       I_r       &   I_r                                         \\
\end{array}
\]
is a singular square by Theorem \ref{rectsing}, equation \eqref{eq_99}, and the fact that $YX = YX'$, and hence from
relation \eqref{eqn_bottom} we deduce $f_{X,Y} = f_{X',Y}$ in this case. Dually, if $X = X'$ then the square
\[
\begin{array}{cl||cc}
    &           &       X
    &           I(1|2| \cdots |r)^T

    \\ \hline\hline
    & & &  \\
& Y
&       YX      &   I_r                                         \\
 & & &   \\
 & Y'
&       Y'X      &   I_r  \\
\end{array}
\]
is singular since $YX = Y'X$, and hence from relation \eqref{eqn_bottom} we deduce $f_{X,Y} = f_{X,Y'}$ in this case.

But now, since we know that the $\lambda$-graph of $YX$ in \eqref{eqn_thecomponent}
is connected, it follows that there is a sequence of entries in \eqref{eqn_thecomponent} from $P_r(Y,X)$ to
$P_r(Y',X')$, all equal to $YX$, where adjacent terms in the sequence are either in the same row or column of $P_r$,
and thus the corresponding generators are equal by the arguments given in the previous two paragraphs. Therefore, we
may deduce $f_{X,Y} = f_{X',Y'}$ as a consequence of the relations from the presentation $\mathcal{P}_{r,n}$.
\end{proof}
The rest of this section is concerned with the proof of the above theorem.

\begin{proof}[Proof of Theorem~\ref{thm_multtables}]
Let $k$ and $m$ be positive integers with $k \leq m$. We prove the result by induction on $k+m$. When $k=m=1$ the
result is trivially seen to hold, since in this case $\mathbb{T}_{1,1}$ is the multiplication table of $Q$, the only
non-invertible element of which is $0$, and as already observed above the corresponding $\lambda$-graph is connected.
Now suppose $k+m>2$ and assume inductively that the result holds for all pairs $(k',m')$ with $k' \leq m'$ and $k'+m' <
k+m$. Let $K \in M_k(Q)$ be arbitrary.

The table $\mathbb{T}_{m,k}$ naturally divides into regions indexed by pairs $(\alpha, \beta)$ where by definition the
$(\alpha,\beta)$-region is the set of all pairs
\[
\left( [\alpha|A'],\left[\frac{\beta}{B'}\right] \right),
\]
where $A' \in M_{k \times (m-1)}(\F)$, $B' \in M_{(m-1) \times k}(\F)$, $\alpha$ is a column vector and $\beta$ is a row
vector. Note that the region $(0,0)$ is a natural copy of the table $\mathbb{T}_{m-1,k}$ inside $\mathbb{T}_{m,k}$.

For part (i), we are given that $k<m$ and must prove that the $\lambda$-graph of $K$ in $\mathbb{T}_{m,k}$ is
connected. We consider two cases.

\

\noindent \emph{Case 1: $k<m-1$:} Let $A \in  \mathrm{Mat}_{k \times m}(Q)$ and $B \in  \mathrm{Mat}_{m \times k}(Q)$ be arbitrary, and write
\[
A = [\alpha | A_1 | A_2] \quad \mbox{and} \quad B = \left[ \begin{array}{c} \beta \\ \hline B_1 \\ \hline B_2
\end{array} \right],
\]
where $\alpha$ is a $k \times 1$ column vector, $\beta$ is a $1 \times k$ row vector, and $A_2, B_2 \in M_k(\F)$. Then
\begin{equation}\label{eqn_theproduct}
AB = \alpha \beta + A_1 B_1 + A_2 B_2.
\end{equation}
We begin by arguing that without loss of generality we may assume that $B_2 \in M_k(\F)$ is invertible. Indeed, let $U
\in M_k(Q)$ be an idempotent $\gr$-related to $A_2 B_2$.
Such an idempotent $U$ exists since $M_k(Q)$ is regular.
Then, since every idempotent is a left identity in its
$\gr$-class (see \cite[Proposition~2.3.3]{howie95}), $U \gr A_2 B_2$ implies $U A_2 B_2 = A_2 B_2$ and hence also $U
A_2 \gr U \gr A_2 B_2$. Therefore by \eqref{eqn_R} there is an invertible matrix $X \in GL_k(Q)$ such that
$ U A_2 X = A_2 B_2 = U A_2 B_2. $ Thus
\begin{align*}
[\alpha | A_1 | A_2] \left[ \begin{array}{c} \beta \\ \hline B_1 \\ \hline B_2 \end{array} \right] & = [\alpha | A_1 |
UA_2] \left[ \begin{array}{c} \beta \\ \hline B_1 \\ \hline B_2 \end{array} \right]
\\
& = [\alpha | A_1 | UA_2] \left[ \begin{array}{c} \beta \\ \hline B_1 \\ \hline X \end{array} \right],
\end{align*}
where $X \in M_k(\F)$ is invertible, and this sequence of equalities defines a path in the $\lambda$-graph of $AB$.
Hence we may assume without loss of generality that $B_2$ is invertible. But then
\begin{align*}
[\alpha | A_1 | A_2] \left[ \begin{array}{c} \beta \\ \hline B_1 \\ \hline B_2 \end{array} \right] & = [0 | A_1 | A_2 +
\alpha\beta B_2^{-1}] \left[ \begin{array}{c} \beta \\ \hline B_1 \\ \hline B_2 \end{array} \right]
\\
& = [0 | A_1 | A_2 + \alpha\beta B_2^{-1}] \left[ \begin{array}{c} 0 \\ \hline B_1 \\ \hline B_2 \end{array} \right],
\end{align*}
and so we have found a $\lambda$-path into the $(0,0)$-region. Recall that the $(0,0)$-region is a natural copy of
$\mathbb{T}_{m-1,k}$ inside the table $\mathbb{T}_{m,k}$.
Since $k < m-1$ it follows by induction, applying (i), that the $\lambda$-graph of $AB$ restricted to the $(0,0)$-region is connected. Therefore every occurrence of $AB$ is connected to an occurrence of $AB$ in the $(0,0)$-region, while any two occurrences of $AB$ in the $(0,0)$-region are joined by a $\lambda$-path in the $(0,0)$-region by induction. Since the pair $A$, $B$ was arbitrary, this completes the proof that the $\lambda$-graph of $K$ is connected in this case.

\

\noindent \emph{Case 2: $k=m-1$:}
Arguing as in the previous case, for every entry in $\mathbb{T}_{m,k}$ there is a $\lambda$-path to a pair of the form
\[
[0|C]\left[\frac{0}{D}\right],
\]
where $C, D \in \mathrm{Mat}_{(m-1) \times (m-1)}(Q)$ and $D$ is invertible. Now there are two cases depending on whether or not $C$ is invertible.

If $C$ is not invertible then $CD$ is not invertible and so by induction, applying (ii),
the $\lambda$-graph of $CD$ in the $(0,0)$-region is connected, and the proof is complete as in the previous case.

So we may suppose that both $C$ and $D$ are invertible, and hence so is their product $CD$. It is easy to see that for
any matrix $L \in M_{m-1}(Q)$ appearing in the table $\mathbb{T}_{m,k} = \mathbb{T}_{m,m-1}$ and for any pair $X$ and
$Y$ of invertible $(m-1) \times (m-1)$ matrices we have that the $\lambda$-graph of $L$ is connected if and only if the
$\lambda$-graph of $XL$ is connected if and only if the $\lambda$-graph of $XLY$ is connected. Indeed, left
multiplication by $X$ induces a permutation of the set of matrices $M_{(m-1) \times m}(Q)$ which label the rows of the
table $\mathbb{T}_{m,m-1}$; the same is true for right multiplication by $Y$ on the set of matrices $M_{m \times
(m-1)}(Q)$ labelling the columns of $\mathbb{T}_{m,m-1}$. This transformation of the table will result in a table where
the entries $XLY$ appear in precisely the positions where the entries $L$ appeared in the the original table
$\mathbb{T}_{m,m-1}$. Since permuting rows and columns of the table does not affect $\lambda$-connectedness, we have
the desired conclusion.

Therefore it will suffice to show that the $\lambda$-graph of
\[
[0|C]\left[\frac{0}{C^{-1}}\right] = I_{m-1},
\]
is connected. Of course within the $(0,0)$-region the $\lambda$-graph of $I_{m-1}$ is \emph{not} connected
(since $I_{m-1}$ belongs to the group of units) and so it will be necessary to move out of that region in order to
prove that the $\lambda$-graph of $I_{m-1}$ is connected in $\mathbb{T}_{m,m-1}$.

We shall prove that there is a $\lambda$-path connecting $([0|C]\left[\frac{0}{C^{-1}}\right])$ into the region
\[([1,0,0,\ldots,0], [1,0,0,\ldots,0]^T).\] Indeed, we have
\begin{align*}
[0|C]\left[\frac{0}{C^{-1}}\right] & = [0|C] \left[\frac{\begin{matrix} 1 & 0 & 0 & \cdots & 0
\end{matrix}}{C^{-1}}\right]
\\
\\
& = \left[ \begin{array}{cc}
  \begin{matrix}
  1 \\ 0 \\ 0 \\ \vdots \\ 0
  \end{matrix} \ \vline
  &
  (I_{m-1} - E_{11}) C
\end{array}\right] \left[\frac{\begin{matrix} 1 & 0 & 0 & \cdots & 0 \end{matrix}}{C^{-1}}\right],
\end{align*}
where $E_{11} = [1,0,\ldots,0]^T[1,0,\ldots,0]$ denotes the $(m-1) \times (m-1)$ matrix with a $1$ in the top left corner and zeros
everywhere else. Computing the last of these products gives
\[
E_{11} + (I_{m-1}-E_{11})CC^{-1} = I_{m-1},
\]
as required. But the matrix $(I_{m-1}-E_{11}) \in M_{m-1}(Q)$ is clearly not invertible and so it follows by induction,
applying (ii), that inside the region \[([1,0,0,\ldots,0], [1,0,0,\ldots,0]^T)\] the $\lambda$-graph of the $I_{m-1}$ is
connected. This is because the \[([1,0,0,\ldots,0], [1,0,0,\ldots,0]^T)\] region is a copy of the table
$\mathbb{T}_{m-1,m-1}$ with $E_{11}$ added to each entry, and therefore the $\lambda$-graph of $ I_{m-1} = E_{11} +
(I_{m-1}-E_{11}) $ is connected, since $I_{m-1}-E_{11}$ is non-invertible.

\begin{sloppypar}
In conclusion we have proved that for every occurrence of $I_{m-1}$ in $\mathbb{T}_{m,k}$ there is a $\lambda$-path
into the $(0,0)$-region, and for every occurrence of $I_{m-1}$ in the $(0,0)$-region there is a $\lambda$-path to the
$([1,0,0,\ldots,0], [1,0,0,\ldots,0]^T)$-region, and in this region every pair of occurrences of $I_{m-1}$ are
connected by a $\lambda$-path. Therefore the $\lambda$-graph of $I_{m-1}$ in $\mathbb{T}_{m,k}$ is connected,
completing the proof of the inductive step for part (i) of the theorem.
\end{sloppypar}

For part (ii), we are given that $k=m$ and that $K$ is non-invertible, and  again we want to show that the
$\lambda$-graph of $K$ in $\mathbb{T}_{m,k}=\mathbb{T}_{m,m}$ is connected.

Consider the entry $AB$ in the multiplication table where $A, B \in M_m(\F)$ and $AB$ is not invertible, so $\rank{AB}
= l < m=k$. Therefore $AB$ is in the same $\gd$-class as the matrix $ J =
\begin{bmatrix}
I_l & 0 \\
0 & 0
\end{bmatrix}.
$ Hence by \eqref{eqn_J} we can write $J = X (AB) Y$ where $X$ and $Y$ are invertible matrices. But since $X$ and $Y$
are invertible it follows that in $\mathbb{T}_{m,k}$ the $\lambda$-graph of $AB$ is connected if and only if the $\lambda$-graph
of $XAB$ is connected if and only if the $\lambda$-graph of $XABY=J$ is connected.
So we shall prove instead that the $\lambda$-graph of $J$ is connected.

Suppose that $AB = J$ where $A, B \in M_m(Q)$. Then we can write
\[
AB =
\begin{bmatrix}
A_{11} & A_{12} \\
A_{21} & A_{22}
\end{bmatrix}
\begin{bmatrix}
B_{11} & B_{12} \\
B_{21} & B_{22}
\end{bmatrix}
=
\begin{bmatrix}
I_l & 0 \\
0 & 0
\end{bmatrix}
=
J
,
\]
where $A_{11}$ and $B_{11}$ are both $l \times l$ matrices. Consequently, there is a $\lambda$-path given by
\begin{align*}
\begin{bmatrix}
A_{11} & A_{12} \\
A_{21} & A_{22}
\end{bmatrix}
\begin{bmatrix}
B_{11} & B_{12} \\
B_{21} & B_{22}
\end{bmatrix}
& =
\begin{bmatrix}
A_{11} & A_{12} \\
A_{21} & A_{22}
\end{bmatrix}
\begin{bmatrix}
B_{11} & 0 \\
B_{21} & 0
\end{bmatrix} \\
& =
\begin{bmatrix}
A_{11} & A_{12} \\
0 & 0
\end{bmatrix}
\begin{bmatrix}
B_{11} & 0  \\
B_{21} & 0
\end{bmatrix},
\end{align*}
into a region that is a natural copy of $\mathbb{T}_{m,l}$  inside $\mathbb{T}_{m,k}$. By induction, since $l < k = m$,
the $\lambda$-graph of $AB$ in this copy of $\mathbb{T}_{m,l}$  in $\mathbb{T}_{m,k}$ is connected, which completes the
proof of the inductive step for (ii), and hence also completes the proof of the theorem.
\end{proof}

\section{Strongly connecting the Table}
\label{sec_lambdalabels}

In this section we shall complete Stage~2 of the proof of the main theorem by extending Lemma~\ref{lem_topleft} to
obtain the following result.
Throughout this section $n$ and $r$ will denote positive integers satisfying $1 \leq r < n/3$. This assumption will be necessary for our proof of Theorem~\ref{thm_thefulltable} below.

\begin{lem}
\label{lem_fulltable}
Let $n$ and $r$ be positive integers with $1 \leq r < n/3$, and let $Y, Y' \in \Y_r$ and $X, X' \in \X_r$. If $P_r(Y,X) = P_r(Y',X') \neq 0$ then
$f_{X,Y}=f_{X',Y'}$ is a consequence of the relations \eqref{eqn_middle}--\eqref{eqn_bottom}.
\end{lem}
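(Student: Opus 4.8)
The plan is to derive Lemma~\ref{lem_fulltable} from the already-settled top-left case (Lemma~\ref{lem_topleft}) by upgrading it to a global connectivity statement for the whole table $P_r$, in exact analogy with the way Lemma~\ref{lem_topleft} was extracted from Theorem~\ref{thm_multtables}. First I would isolate the mechanism that turns combinatorial adjacency into relations among the generators. Call two positions $(Y,X)$ and $(Y',X')$ of $P_r$ carrying the same non-zero label $K=YX=Y'X'$ \emph{strongly connected} if they lie in a common row or column and complete to a singular square whose remaining two corners are labelled $I_r$. Concretely, if $Y=Y'$ and there is some $\bar Y\in\Y_r$ with $\bar Y X=\bar Y X'=I_r$, then the quadruple $(X,X',Y,\bar Y)$ satisfies \eqref{eq_99} (both sides equal $K$), so by Theorem~\ref{rectsing} it is singular; since $f_{X,\bar Y}=f_{X',\bar Y}=1$ by Lemma~\ref{lem_hash}, relation \eqref{eqn_bottom} collapses to $f_{X,Y}=f_{X',Y}$. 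The dual statement, with rows and columns (and the roles of $Y$ and $X$) interchanged, uses a closing $\bar X\in\X_r$ with $Y\bar X=Y'\bar X=I_r$ and gives $f_{X,Y}=f_{X,Y'}$. Hence it suffices to prove that, for each fixed $K\in GL_r(Q)$, any two positions of $P_r$ labelled $K$ are joined by a finite chain of strong connections.

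The second step is to establish this connectivity. Lemma~\ref{lem_topleft} already provides it when both positions lie in the top-left region $(1<\cdots<r)\times(1<\cdots<r)$: there the scattered identities $I(1|\cdots|r)$ and $I(1|\cdots|r)^T$ serve as the closing vertices $\bar Y,\bar X$ regardless of the columns (resp. rows) being joined, so $\lambda$-adjacency in that region coincides with strong connection. The remaining task is therefore to strongly connect an arbitrary position $(Y,X)$, lying in some region $(i_1<\cdots<i_r)\times(j_1<\cdots<j_r)$, to a position with the same label $K$ in the top-left region. I would carry this out by a normalisation that drives the pivots of $X$ down to $\{1,\ldots,r\}$ and then the pivots of $Y$ down to $\{1,\ldots,r\}$, moving one pivot toward its target at a time: a same-row step replaces $X$ by some $X_0$ with $YX_0=K$ and $LR(X_0)$ one notch closer to $\{1,\ldots,r\}$, and a same-column step does the analogous thing for $Y$. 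Each elementary step is a strong connection precisely when the relevant closing idempotent $\bar Y$ (resp. $\bar X$) can be produced, and once both pivot sets reach $\{1,\ldots,r\}$ we are inside the top-left region and finish by Lemma~\ref{lem_topleft}.

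The hard part is exactly the existence of these closing idempotents, and this is where the hypothesis $r<n/3$ enters (Lemma~\ref{lem_topleft} required only $r<n/2$). To realise a same-row move one must exhibit a rank-$r$ RRE matrix $\bar Y$ that is simultaneously a left inverse for the two columns being joined, i.e. $\bar Y X=\bar Y X'=I_r$; constructing such a $\bar Y$ amounts to anchoring its $r$ pivots at coordinates that avoid the pivot supports of both $X$ and $Y$ (each of size $r$), so that the resulting matrices genuinely land in $\X_r$ and $\Y_r$ with the correct rank. The inequality $3r<n$ guarantees a third block of $r$ free coordinates is always available for this purpose, which is what makes every elementary normalisation step strongly realisable. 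I expect the bulk of the work — and the natural home of an auxiliary combinatorial connectivity theorem, proved by induction on how far the region sits above the top-left region in the $\preceq$-order — to be the bookkeeping verifying that each one-pivot move admits such a closing matrix and that all intermediate matrices remain in RRE form; once that combinatorial connectivity is in hand, the passage back to the equalities $f_{X,Y}=f_{X',Y'}$ is immediate from the mechanism of the first paragraph.
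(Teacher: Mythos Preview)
Your plan is the paper's plan: isolate strong edges, show each one yields $f_{X,Y}=f_{X',Y'}$ via Lemma~\ref{lem_hash} and \eqref{eqn_bottom}, and then prove a global strong-connectivity statement (the paper's Theorem~\ref{thm_thefulltable}) that funnels every label into the top-left region and hence into Lemma~\ref{lem_topleft}.

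One correction, though, to spare you trouble when you execute the normalisation. Your account of where $r<n/3$ bites is not right: the closing matrices $\bar Y,\bar X$ are \emph{not} built on a third free block of pivots disjoint from $LC(Y)$ and $LR(X)$. In the paper they are always scattered identities supported on the pivot set of one of the two matrices being joined. For same-region moves this is Lemma~\ref{lem_zero}; for the cross-region pivot-lowering moves the new column matrix is deliberately chosen to retain $1$'s at the \emph{old} pivot positions, so the same scattered identity $I(j_1|\cdots|j_r)$ still closes (see the squares in Lemma~\ref{lem_gettinginto1212}). The pivots of $Y$ play no role in selecting $\bar Y$. The hypothesis $3r<n$ is invoked only once, at the very end of the proof of Theorem~\ref{thm_thefulltable}, and for a different reason: it guarantees $|\{1,\ldots,n\}\setminus(LC(A)\cup LR(B))|>r$, which forces a linear dependence among the corresponding columns of $A$; that dependence is what allows one to modify $B$ \emph{within its own region} so that a chosen row becomes zero, after which the matching column of $A$ can be altered freely to push $LC(A)$ down lexicographically. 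All the remaining pivot-lowering (once inside the ``big box'' $i_r,j_r\le n-r$, Corollary~\ref{cor_thebigbox}) needs only $r<n/2$.
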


As usual, we first recast this problem combinatorially.

Let $P = P(B,A)$ be a matrix with rows indexed by a set $\mathcal{B}$ and columns indexed by $\mathcal{A}$, where the
entries of $P$ all come from a set $L \cup \{ 1 \}$ where $1$ is a distinguished symbol not belonging to $L$. Let $l
\in L$ and consider the $\lambda$-graph of $l$ defined in Section~\ref{sec_multtables}. We say that two vertices
$(B,A)$ and $(B',A')$ of the $\lambda$-graph of $l$ are \emph{connected by a strong edge} if either
\begin{enumerate}
\item[(i)]
$B=B'$ and there exists $B_1 \in \mathcal{B}$ such that $P(B_1,A) = P(B_1,A')=1$; or
\item[(ii)]
$A=A'$ and there exists $A_1 \in \mathcal{A}$ such that $P(B,A_1)=P(B',A_1)=1$.
\end{enumerate}
A \emph{strong path} is then a sequence of vertices where adjacent terms in the sequence are connected by strong edges,
and we say that the $\lambda$-graph of $l$ is \emph{strongly connected} if between any pair of vertices $(B,A)$ and
$(B',A')$ there is a strong path.
A strong path of length $3$ is illustrated in Figure~\ref{fig_table}.


\begin{figure}[t]

\begin{center}
\begin{tikzpicture}[scale=0.3,
smallbox/.style={draw, color=gray!40, fill=gray!35, rectangle, minimum height=30mm, minimum width=30mm},
bigbox/.style={draw, color=gray!20, fill=gray!20, rectangle, minimum height=60mm, minimum width=60mm}]
\tikzstyle{ACircle}=[circle,
                                    minimum size=0.25cm,
                                    draw=black!100,
                                    fill=gray!10]
\tikzstyle{ASquare}=[diamond,
                                    minimum size=0.25cm,
                                    draw=black!100,
                                    fill=gray!10]
\tikzstyle{ADiamond}=[rectangle,
                                    minimum size=0.25cm,
                                    draw=black!100,
                                    fill=gray!10]
\tikzstyle{vertex}=[circle,draw=black, fill=black, inner sep = 0.3mm]
\node at (10,20) [bigbox] {};
\node at (5,25) [smallbox] {};
\draw (0,30)--(30,30)--(30,0)--(0,0)--(0,30);
\draw (0,20)--(30,20);
\draw (0,14)--(30,14);
\draw (0,10)--(30,10);
\draw (0,2)--(30,2);
\draw (20,0)--(20,30);
\draw (16,0)--(16,30);
\draw (10,0)--(10,30);
\draw (28,0)--(28,30);
\node (I00) at (0.5,29.5) {\small $I$};
\node (I01) at (1.5,29.5) {\small $I$};
\node (I02) at (2.5,29.5) {\small $I$};
\node (dots1) at (5,29.5) {\small $\ldots$};
\node (I03) at (7.5,29.5) {\small $I$};
\node (I04) at (8.5,29.5) {\small $I$};
\node (I05) at (9.5,29.5) {\small $I$};
\node (I10) at    (0.5,28.5) {\small $I$};
\node (I20) at    (0.5,27.5) {\small $I$};
\node (vdots1) at (0.5,25) {\small $\vdots$};
\node (I30) at    (0.5,22.5) {\small $I$};
\node (I40) at    (0.5,21.5) {\small $I$};
\node (I50) at    (0.5,20.5) {\small $I$};
\node [ACircle] (K1c) at    (5.5,23.5) {};
\node (K1) at    (5.5,23.5) {\small $K$};
\node [ACircle] (K3c) at    (5.5,17.5) {};
\node (K3) at    (5.5,17.5) {\small $K$};
\node [ACircle] (II4c) at    (17.5,23.5) {};
\node (II4) at    (17.5,23.5) {\small $I$};
\node [ACircle] (II5c) at    (17.5,17.5) {};
\node (II5) at    (17.5,17.5) {\small $I$};
\node [ASquare] (K4s) at    (1.5,17.5) {};
\node (K4) at    (1.5,17.5) {\small $K$};
\node [ASquare] (K5s) at    (1.5,11.5) {};
\node (K5) at    (1.5,11.5) {\small $K$};
\node [ASquare] (II6s) at    (13.5,17.5) {};
\node (II6) at    (13.5,17.5) {\small $I$};
\node [ASquare] (II7s) at    (13.5,11.5) {};
\node (II7) at    (13.5,11.5) {\small $I$};
\node [ADiamond] (K2t) at    (11.5,4.5) {};
\node (K2) at    (11.5,4.5) {\small $K$};
\node [ADiamond] (K7t) at    (14.5,4.5) {};
\node (K7) at    (14.5,4.5) {\small $K$};
\node [ADiamond] (I8t) at    (14.5,19.5) {};
\node (I8) at    (14.5,19.5) {\small $I$};
\node [ADiamond] (I9t) at    (11.5,19.5) {};
\node (I9) at    (11.5,19.5) {\small $I$};
\node (I200) at    (10.5,19.5) {\small $I$};
\node (I201) at    (11.5,19.5) {\small $I$};
\node (I202) at    (12.5,19.5) {\small $I$};
\node (I203) at    (15.5,19.5) {\small $I$};
\node (dots3) at    (13.4,19.5) {\small $\ldots$};
\node (I210) at    (10.5,18.5) {\small $I$};
\node (I220) at    (10.5,17.5) {\small $I$};
\node (I240) at    (10.5,14.5) {\small $I$};
\node (vots3) at    (10.5,16.5) {\small $\vdots$};
\node (I300) at    (16.5,13.5) {\small $I$};
\node (I310) at    (19.5,13.5) {\small $I$};
\node (I301) at    (16.5,10.5) {\small $I$};
\node (dots4) at    (18,13.5) {\small $\ldots$};
\node (vots4) at    (16.5,12.5) {\small $\vdots$};
\node (I400) at    (29,1) {\small $I$};
\draw [thick] (K1c)--(K3c)--(K4s)--(K5s);
\draw [thick] (K2t)--(K7t);
\node (dotsdd) at    (24,6) {\small $\ddots$};
\node (dotsrr) at    (24,25) {\small $\cdots$};
\node (dotsud) at    (5,6) {\small $\vdots$};
\node (12TOr) at    (5,31) {\small $(1 < 2 < \cdots< r)$};
\node (topright) at    (18,31) {\small $(i_1 < i_2 < \cdots < i_r)$};
\node [rotate=90] (12TOrbottom) at    (-1,25) {\small $(1 < 2 < \cdots< r)$};
\node [rotate=90] (bottomleft) at    (-1,12) {\small $(i_1 < i_2 < \cdots < i_r)$};
\draw [decorate,decoration={brace,amplitude=10pt},xshift=0pt,yshift=-4pt,thick]
(0,32) -- (20,32) node [black,midway,yshift=-0.6cm]
{};
\node (BigTopLabel) at    (10,34) {\small $\bigcup_{j_r \leq n-r, \; k_r \leq n-r}
(j_1 < j_2 < \cdots < j_r)
\times
(k_1 < k_2 < \cdots < k_r)
$};
\end{tikzpicture}
\end{center}
\caption{An illustration of the table $\mathbb{T}_{n,r}$ from Theorem \ref{thm_thefulltable}.
The regions of the table are indicated, in each diagonal region the $I$s corresponding to the edges of type (T1) and (T2) from the spanning tree $T_{n,r}$ are indicated.
The diagonal regions vary in size with the bottom right diagonal region $\Delta(n-r+1 < \cdots < n)$ having just a single entry.
The singular square indicated by the quadruple of shaded squares illustrates the proof of Lemma~\ref{lem_zero}.
}\label{fig_table}
\end{figure}

\begin{thm}
\label{thm_thefulltable} Let $r$ and $n$ be positive integers with $r < n/3$, let $\Y_r$ be the set of all $r \times n$
rank $r$ matrices over a division ring $Q$ in reduced row echelon form, $\X_r$ be the set of transposes of elements
of $\Y_r$, and let $\mathbb{T}_{n,r} = \mathbb{T}_{n,r}(Y,X)$ be the matrix with entries $YX \in M_r(Q)$ where $Y \in
\Y_r$ and $X \in \X_r$. Then for every matrix $K \in M_r(Q)$ the $\lambda$-graph of $K$ in $\mathbb{T}_{n,r}$ is
strongly connected with respect to the distinguished entries $1=I_r$.
\end{thm}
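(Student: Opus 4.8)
The plan is to prove, for every $K \in M_r(Q)$, that each occurrence of $K$ in $\mathbb{T}_{n,r}$ is joined by a strong path to an occurrence lying in the top-left region $(1<2<\cdots<r)\times(1<2<\cdots<r)$, and to treat that region as the base case. Inside the top-left region, writing $Y=[\,I_r\mid \bar Y\,]$ and $X=\left[\frac{I_r}{\bar X}\right]$ sets up a bijection with the full multiplication table $\mathbb{T}_{n-r,r}$ under which $YX=I_r+\bar Y\bar X$. Since $r<n/3$ gives $r<n-r$, Theorem~\ref{thm_multtables}(i) shows that the $\lambda$-graph of $K-I_r$ in $\mathbb{T}_{n-r,r}$ is connected, and hence that the $\lambda$-graph of $K$ in the top-left region is connected. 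Moreover every such ordinary edge upgrades to a strong edge: the scattered identity $I(1|\cdots|r)$ satisfies $I(1|\cdots|r)\,X=I_r$ for every $X$ in the region, and dually $Y\,I(1|\cdots|r)^T=I_r$ for every such $Y$, so these two fixed matrices serve as universal witnesses. Thus the top-left region is internally strongly connected.

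The engine of the reduction is the general witness principle: whenever $LC(Y)=\{j_1,\ldots,j_r\}$ one has $Y\,I(j_1|\cdots|j_r)^T=I_r$, and whenever $LR(X)=\{k_1,\ldots,k_r\}$ one has $I(k_1|\cdots|k_r)\,X=I_r$. Consequently, to strongly connect two $K$-occurrences sharing a column (respectively a row) it suffices to exhibit a \emph{single} auxiliary index at which both relevant matrices carry an $I_r$-entry. Using the order $\preceq$ on $r$-element subsets from Section~\ref{sec_preliminaries}, I would then push the leading data $(LC(Y),LR(X))$ of an arbitrary $K$-occurrence one step down the Hasse tree at a time: first lowering $LC(Y)$ to $\{1,\ldots,r\}$ through column-edges that fix $X$, then lowering $LR(X)$ to $\{1,\ldots,r\}$ through the dual row-edges that fix $Y$, until the occurrence lands in the top-left region. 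Since $\preceq$ is well-founded this terminates, and the base case then finishes the proof, exactly as the three-step strong path in Figure~\ref{fig_table} illustrates.

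Each elementary reduction step is the heart of the matter. To lower $LC(Y)$ with $X$ held fixed, I would construct a matrix $Y'\in\Y_r$ such that (a) $Y'$ is in reduced row echelon form with $LC(Y')$ a $\preceq$-predecessor of $LC(Y)$, (b) $Y'X=YX=K$, and (c) there is a scattered identity $X_1$ with $YX_1=Y'X_1=I_r$; the $2\times2$ array with rows indexed by $Y,Y'$ and columns by $X,X_1$ then carries the entries $K,I_r,K,I_r$, so its two $K$-vertices are joined by a strong edge. When $K$ is invertible this array is a genuine rectangular band, singular by Theorem~\ref{rectsing} and \eqref{eq_99}, and this is precisely the configuration that, via \eqref{eqn_bottom}, will yield the equality of the corresponding generators in Lemma~\ref{lem_fulltable}; for non-invertible $K$ the strong edge is used purely combinatorially as a stepping stone in the induction.

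The main obstacle, and the sole point at which the sharp hypothesis $r<n/3$ is needed, is carrying out (a)--(c) simultaneously. The leading structure of $Y$ occupies $r$ columns and that of $X$ occupies $r$ rows, while reproducing the product $K$ against the fixed $X$ and arranging a \emph{common} column $X_1$ at which both $Y$ and $Y'$ read $I_r$ forces a further independent block of $r$ coordinates that must avoid the first two. The inequality $n-2r>r$, that is $n>3r$, is exactly what guarantees enough free columns to place this block while keeping $Y'$ in reduced row echelon form. I expect the bulk of the remaining work to be this echelon-form bookkeeping, most naturally isolated as one or two auxiliary lemmas treating the single-step reduction, with the $\preceq$-induction and the top-left base case then assembling them into the theorem.
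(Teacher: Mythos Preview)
Your overall architecture is right and matches the paper: the top-left region $(1<\cdots<r)\times(1<\cdots<r)$ is internally strongly connected exactly as you describe (this is Lemma~\ref{lem_frombefore}), and the theorem follows once every occurrence of $K$ is joined by a strong path to that region. The gap is in your reduction scheme. It is \emph{not} in general possible to lower $LC(Y)$ with $X$ held fixed and then lower $LR(X)$ with the resulting $Y$ held fixed, even when $r<n/3$. Take $n=4$, $r=1$, $K=0$, $Y=[1,0,0,0]$, $X=[0,1,0,0]^T$. Your first phase is vacuous since $LC(Y)=\{1\}$ already; for the second you need $X'\in\X_1$ with $LR(X')=\{1\}$ and $YX'=0$, but every such $X'$ has the form $[1,a,b,c]^T$ and hence $YX'=1$. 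Condition (b) has no solution at all, so this is not echelon bookkeeping but a genuine obstruction to the rigid two-phase plan.

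The paper gets past this by inserting, \emph{before} any change of region, a normalisation step within the current region. Lemma~\ref{lem_lemmaAnew} shows, by alternating strong edges that modify $Y$ and $X$ in turn (each justified by Lemma~\ref{lem_zero}), that any occurrence in a region with $i_r,j_r\le n-r$ is strongly connected to one with row index $[\,I_{r\times(n-r)}(i_1|\cdots|i_r)\mid D\,]$ and column index $\left[\begin{smallmatrix} I_{r\times(n-r)}(j_1|\cdots|j_r)^T \\ N \end{smallmatrix}\right]$ where $D\in GL_r(Q)$. Only with $D$ invertible does the $\preceq$-descent of Lemma~\ref{lem_gettinginto1212} go through, since invertibility of $D$ is exactly what guarantees a solution for the new bottom block at each step. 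In the example above one first moves within region $(1)\times(2)$ from $Y=[1,0,0,0]$ to $[1,0,0,D]$ with $D\neq 0$, after which the descent succeeds. So the missing ingredient is not the witness $X_1$ in your (c)---scattered identities supply that cheaply---but rather the solvability of $Y'X=K$ in (b).

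A second structural difference: the paper does not run a single global $\preceq$-induction. It first handles the ``big box'' $\bigcup_{i_r,j_r\le n-r}(i_1<\cdots<i_r)\times(j_1<\cdots<j_r)$ via the lemmas above (Corollary~\ref{cor_thebigbox}), and only afterwards pulls an arbitrary occurrence into this box. The hypothesis $r<n/3$ is used solely in that final step, and through a mechanism different from the one you sketch: since $|\{1,\ldots,n\}\setminus(LC(Y)\cup LR(X))|\ge n-2r>r$, the columns of $Y$ indexed by this set are $r$-vectors numbering more than $r$ and hence linearly dependent; that dependence is exploited to zero a row of $X$ without leaving the region, after which a leading column of $Y$ can be shifted down in the lexicographic order.
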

Note that $\mathbb{T}_{n,r}$ is not exactly the same as the Rees structure matrix $P_{r}$ since $\mathbb{T}_{n,r}$
contains all products $YX$ even if $YX$ does not have rank $r$.

The aim of  Theorem \ref{thm_thefulltable} is to show that for every symbol $K$ appearing in the table, the $\lambda$-graph of $K$ is strongly connected.
The structure of the proof is outlined in Figure~\ref{fig_table}.
A strong path of length $3$ is indicated in the figure. The first and last edges of this path are strong edges because of the singular squares indicated by the quadruple of diamonds and circles respectively. The remaining third edge of this path is a strong edge as a consequence of Lemma~\ref{lem_zero}. In Lemma~\ref{lem_topleft} we prove that the $\lambda$-graph of $K$ restricted to the small dark grey region of the table is strongly connected. Then in Corollary~\ref{cor_thebigbox} we prove that the  $\lambda$-graph of $K$ restricted to the larger light grey region of the table is strongly connected. This is done by finding a strong path from every $K$ in the light grey region to a label $K$ in the dark grey region. Finally we complete the proof of Theorem~\ref{thm_thefulltable} by finding a strong path from an arbitrary $K$ into the light grey region.

Before going on to prove Theorem~\ref{thm_thefulltable}
let us see how Lemma~\ref{lem_fulltable} may be deduced from it.

\begin{proof}[Proof of Lemma~\ref{lem_fulltable}]
Let $Y, Y' \in \Y_r$ and $X, X' \in \X_r$ and suppose that $P_r(Y,X) = P_r(Y',X') \neq 0$. If $(Y,X)$ and $(Y',X')$ are
connected by a strong edge in the Rees structure matrix $P_{r}$ then applying Lemma \ref{lem_hash}, equation
\eqref{eq_99} and relation \eqref{eqn_bottom} we may deduce that $f_{X,Y}=f_{X',Y'}$. It follows that if $(Y,X)$ and
$(Y',X')$ are connected by a strong path in $P_{r}$ then we may deduce that $f_{X,Y}=f_{X',Y'}$. But by
Theorem~\ref{thm_thefulltable}, $(Y,X)$ and $(Y',X')$ are connected by a strong path in $\mathbb{T}_{n,r}$ and
therefore it is immediate from the definitions of $\mathbb{T}_{n,r}$ and $P_{r}$ that the same path is also a strong
path in $P_{r}$ connecting $(Y,X)$ and $(Y',X')$, proving the lemma.
\end{proof}
The rest of this section will, therefore, be devoted to the proof of Theorem~\ref{thm_thefulltable}. As usual, we
partition the table $\mathbb{T}_{n,r}$ into regions, where the region
\begin{align}
\label{eqn_arbregion}
(i_1 < \cdots < i_r) \times
(j_1 < \cdots < j_r)
\end{align}
is the set of all pairs $(Y,X)$ where $LC(Y) = \{i_1, \ldots, i_r\}$ and $LR(X) = \{j_1, \ldots, j_r \}$.

\begin{lem}
\label{lem_zero} If $(Y,X)$ and $(Y',X')$ belong to the same region of $\mathbb{T}_{n,r}$, with $YX=Y'X'$, and either
$Y=Y'$, or $X=X'$, then they are strongly connected in $\mathbb{T}_{n,r}$.
\end{lem}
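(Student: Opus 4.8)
The plan is to prove this directly by exhibiting, in each of the two cases, an explicit scattered identity matrix that witnesses the required strong edge. Recall that, by the definition of a strong edge, to join two vertices of the $\lambda$-graph of $K$ that share a row (resp.\ a column) it suffices to find a single column (resp.\ row) index at which both relevant entries of $\mathbb{T}_{n,r}$ equal the distinguished symbol $I_r$. Writing the common region as $(i_1 < \cdots < i_r) \times (j_1 < \cdots < j_r)$, we have $LC(Y) = LC(Y') = \{i_1,\ldots,i_r\}$ and $LR(X) = LR(X') = \{j_1,\ldots,j_r\}$, and it is exactly this ``same region'' information that supplies the witnesses.

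First I would treat the case $Y = Y'$, so that the two vertices are $(Y,X)$ and $(Y,X')$. The witness I would use is the scattered identity $Y_1 = I(j_1|\cdots|j_r) \in \Y_r$ indexed by the common leading-row set of $X$ and $X'$. The point is that left multiplication by $Y_1$ selects precisely the pivot rows of a matrix whose leading rows are $\{j_1,\ldots,j_r\}$, so that $Y_1 X = Y_1 X' = I_r$; indeed this is nothing other than the assertion that $(X, Y_1)$ and $(X', Y_1)$ are edges of type (T2). Thus $(Y,X)$ and $(Y,X')$ are joined by a strong edge of type (i).

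The case $X = X'$ is dual: now the vertices are $(Y,X)$ and $(Y',X)$, and I would take $X_1 = I(i_1|\cdots|i_r)^T \in \X_r$ indexed by the common leading-column set of $Y$ and $Y'$. Since $(X_1, Y)$ and $(X_1, Y')$ are then edges of type (T1), we obtain $Y X_1 = Y' X_1 = I_r$, which is a strong edge of type (ii). In both cases the two vertices are therefore strongly connected, as required; this is precisely the shaded square of Figure~\ref{fig_table}, whose two $K$-entries are completed to a rectangle by two $I_r$-entries arising from a scattered identity.

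There is no real obstacle here: the lemma reduces to a single bookkeeping point, namely pairing the shared coordinate with the correct scattered identity --- the leading-row set $\{j_1,\ldots,j_r\}$ when the rows coincide, and the leading-column set $\{i_1,\ldots,i_r\}$ when the columns coincide. I would also note in passing that the hypothesis $YX = Y'X'$ is used only to guarantee that both pairs are genuine vertices of a common $\lambda$-graph; the construction of the witnessing scattered identities relies solely on the two points lying in the same region.
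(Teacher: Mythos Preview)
Your proof is correct and follows essentially the same approach as the paper: in each case you exhibit the appropriate scattered identity (indexed by $LR(X)=LR(X')$ when $Y=Y'$, and by $LC(Y)=LC(Y')$ when $X=X'$) as the witness producing the pair of $I_r$ entries required for a strong edge. The paper's proof is identical in substance, displaying the resulting $2\times 2$ square explicitly; your additional remark that the hypothesis $YX=Y'X'$ serves only to place both pairs in the same $\lambda$-graph is accurate.
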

\begin{proof}
Suppose that $Y=Y'$ and that $X$ and $X'$ both
belong to the region
$
(i_1 < i_2 < \cdots < i_r).
$
Then the square
\[
\begin{array}{cl||cc}
    &           &       X
    &           X'

    \\ \hline\hline
    & & &  \\
& I(i_1|i_2| \cdots |i_r)
&       I_r       &   I_r                                         \\
 & & &   \\
 & Y
&       YX      &   YX' \\
\end{array}
\]
in $\mathbb{T}_{n,r}$ shows that $(Y,X)$ and $(Y,X')$ are strongly connected. The other case is dual using the column
labelled by $I(i_1|i_2|\cdots|i_r)^T$.
\end{proof}
The following result extends Lemma~\ref{lem_topleft}.

\begin{lem}
\label{lem_frombefore}
If $(Y,X)$ and $(Y',X')$ both belong to the region
\[
(1 < 2 < \cdots < r) \times
(1 < 2 < \cdots < r),
\]
and $YX = Y'X'$,
then there is a strong path from $(Y,X)$ to $(Y',X')$ in $\mathbb{T}_{n,r}$.
\end{lem}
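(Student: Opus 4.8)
The plan is to recognise the region $(1 < 2 < \cdots < r) \times (1 < 2 < \cdots < r)$ as a faithful copy of the multiplication table $\mathbb{T}_{n-r,r}$, to transport across this identification the ordinary connectivity supplied by Theorem~\ref{thm_multtables}, and then to upgrade every edge of the resulting path to a \emph{strong} edge by invoking Lemma~\ref{lem_zero}. The point is that the present statement differs from Lemma~\ref{lem_topleft} only in that it concerns all labels $K = YX$ (not merely those of rank $r$) and is phrased in terms of strong paths in $\mathbb{T}_{n,r}$; both differences are absorbed cleanly into the machinery already available.

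First I would set up the identification used in the discussion preceding Lemma~\ref{lem_topleft}. Any pair $(Y,X)$ in this region is written as $Y = [\,I_r\ \bar{Y}\,]$ and $X = \left[\begin{array}{c} I_r \\ \bar{X}\end{array}\right]$ with $\bar{Y} \in M_{r \times (n-r)}$ and $\bar{X} \in M_{(n-r) \times r}$, and the assignments $Y \mapsto \bar{Y}$, $X \mapsto \bar{X}$ are bijections onto the full sets of all $r \times (n-r)$, respectively $(n-r) \times r$, matrices over $Q$. Taking $k = r$ and $m = n-r$ (so that $k < m$, since $r < n/2$), these bijections identify our region with the entire table $\mathbb{T}_{n-r,r}$, sending the entry $YX = I_r + \bar{Y}\bar{X}$ to the entry $\bar{Y}\bar{X}$. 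Since $Y \mapsto \bar{Y}$ and $X \mapsto \bar{X}$ are injective, two pairs of the region share a row (resp.\ column) of $\mathbb{T}_{n,r}$ exactly when the corresponding pairs share a row (resp.\ column) of $\mathbb{T}_{n-r,r}$; hence the $\lambda$-graph of $K$ in the region is graph-isomorphic to the $\lambda$-graph of $K - I_r$ in $\mathbb{T}_{n-r,r}$, and by \eqref{eqn_connection} the hypothesis $YX = Y'X'$ becomes $\bar{Y}\bar{X} = \bar{Y'}\bar{X'}$.

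I would then apply Theorem~\ref{thm_multtables}(i), legitimate here because $k = r < n-r = m$, to conclude that the $\lambda$-graph of $K - I_r$ in $\mathbb{T}_{n-r,r}$ is connected. Pulling this back through the identification yields an ordinary path $(Y,X) = (Y_0,X_0), (Y_1,X_1), \ldots, (Y_t,X_t) = (Y',X')$ lying entirely in the top-left region, along which every pair carries the common entry $K = YX$ and consecutive pairs satisfy $Y_i = Y_{i+1}$ or $X_i = X_{i+1}$. Finally I would upgrade this to a strong path: each consecutive pair lies in the single region $(1 < 2 < \cdots < r) \times (1 < 2 < \cdots < r)$, carries the same entry $K$, and shares a row or a column, which is precisely the hypothesis of Lemma~\ref{lem_zero} (whose singularising squares are built from the distinguished row $I(1|2|\cdots|r)$ and column $I(1|2|\cdots|r)^T$, each of which contributes the entry $I_r$ throughout the region). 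Lemma~\ref{lem_zero} therefore converts each ordinary edge into a strong edge, and concatenation produces the desired strong path from $(Y,X)$ to $(Y',X')$. I do not expect a genuine obstacle: the only items requiring care are checking that the two maps are honest bijections onto the \emph{full} matrix sets — so that Theorem~\ref{thm_multtables} applies to $\mathbb{T}_{n-r,r}$ and not to some proper subtable — and that row/column incidences are preserved, both of which are routine. The substantive content is entirely borrowed from Theorem~\ref{thm_multtables}; the lemma merely repackages that connectivity as strong connectivity.
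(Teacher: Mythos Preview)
Your proposal is correct and follows essentially the same route as the paper: the paper's proof simply says that ``from the results in Section~\ref{sec_multtables}'' there is a (plain) $\lambda$-path inside the top-left region, and then invokes Lemma~\ref{lem_zero} to upgrade each edge to a strong edge. Your write-up just unpacks the first step explicitly---identifying the region with $\mathbb{T}_{n-r,r}$ and applying Theorem~\ref{thm_multtables}(i) via the bijections $Y\mapsto\bar{Y}$, $X\mapsto\bar{X}$---which is exactly the argument used in the proof of Lemma~\ref{lem_topleft} that the paper is implicitly citing.
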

\begin{proof}
From the results in Section~\ref{sec_multtables} there is a path from $(Y,X)$ to $(Y',X')$ in
\[
(1 < 2 < \cdots < r) \times
(1 < 2 < \cdots < r),
\]
and then by Lemma~\ref{lem_zero} this path is actually a strong path.
\end{proof}
Therefore, to prove Theorem~\ref{thm_thefulltable} it will be sufficient to show that for every entry $(Y,X)$ in
$\mathbb{T}_{n,r}$ there is a strong path into the region
\[
(1 < 2 < \cdots < r) \times
(1 < 2 < \cdots < r),
\]
and this is what the rest of the proof will be focused on establishing.
\begin{lem}
\label{lem_lemmaAnew}
Let $K \in M_r(Q)$ be an entry in a region
\begin{align}
\label{eqn_theregagain}
(i_1 < i_2 < \cdots < i_r)
\times
(j_1 < j_2 < \cdots < j_r)
\end{align}
where $i_r \leq n-r$ and $j_r \leq n-r$.
Then there is a strong path in $\mathbb{T}_{n,r}$ from this entry to an entry
\[
K=
\begin{bmatrix}
I_{r \times (n-r)}(i_1|\cdots|i_r) \; | \;  D
\end{bmatrix}
\left[
\begin{array}{c}
\ \\
I_{r \times (n-r)}(j_1|\cdots|j_r)^T \\
\ \\
\hline \\
N \\
\
\end{array} \right]
\]
where $N \in M_r(Q)$ and $D \in GL_r(Q)$.
Dually there is a strong path to an entry of the same form but where
$D \in M_r(Q)$ and $N \in GL_r(Q)$.
\end{lem}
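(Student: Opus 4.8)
The plan is to recast the statement as a connectivity assertion about the $\lambda$-graph of $K$ inside the single region \eqref{eqn_theregagain}, and then to reach the required normal form by a short, explicitly sequenced chain of block manipulations, each realised as a strong edge through Lemma~\ref{lem_zero}. First I would set up coordinates. Since $i_r\le n-r$ and $j_r\le n-r$, every pivot of $Y$ (respectively of $X^T$) lies among the first $n-r$ columns, so the last $r$ columns of $Y$ and the last $r$ rows of $X$ sit strictly to the right of all pivots and are therefore unconstrained in reduced row echelon form. Thus I may write
\[
Y=[\,Y_1\mid Y_2\,],\qquad X=\left[\frac{X_1}{X_2}\right],
\]
where $Y_1$ is $r\times(n-r)$ in RRE form with $LC(Y_1)=\{i_1,\dots,i_r\}$, $X_1^T$ is in RRE form with $LC(X_1^T)=\{j_1,\dots,j_r\}$, and $Y_2,X_2\in M_r(Q)$ are arbitrary, so that $K=Y_1X_1+Y_2X_2$. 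Writing $S=I_{r\times(n-r)}(i_1|\cdots|i_r)$ and $T=I_{r\times(n-r)}(j_1|\cdots|j_r)$, the target entry is the one with $Y_1=S$, $X_1=T^T$ and $Y_2$ invertible.

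The key preliminary observation is that replacing $Y$ (respectively $X$) by any other matrix of the same region that leaves the product $YX$ unchanged is a \emph{strong} edge: the common partner $I(i_1|\cdots|i_r)^T$ (respectively $I(j_1|\cdots|j_r)$) supplies the required $I_r$ entries, exactly as in Lemma~\ref{lem_zero}. Hence it suffices to exhibit an ordinary $\lambda$-path, within the region, from $(Y,X)$ to the target; each step will alter only the free tail blocks or reset a leading block to a scattered identity, so all intermediate matrices stay in RRE form in the region.

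I would use two kinds of moves. A \emph{tail-inversion} move makes $X_2$ invertible while fixing $Y_1$ and $X_1$: picking an idempotent $U\in M_r(Q)$ with $U\;\gr\;Y_2X_2$, one replaces $Y_2$ by $UY_2$ (a $Y$-step, valid since $UY_2X_2=Y_2X_2$) and then $X_2$ by an invertible $V$ with $UY_2V=Y_2X_2$ (an $X$-step); this is precisely the device of Case~1 in the proof of Theorem~\ref{thm_multtables}, and its transpose makes $Y_2$ invertible instead while fixing $Y_1,X_1$. A \emph{cleaning} move uses an already-invertible tail of one factor to reset the leading block of the other: if $X_2$ is invertible, one $Y$-step sends $Y_1\mapsto S$ and $Y_2\mapsto(K-SX_1)X_2^{-1}$ (fixing $X$), and dually if $Y_2$ is invertible one $X$-step sends $X_1\mapsto T^T$ and $X_2\mapsto Y_2^{-1}(K-Y_1T^T)$. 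With these the first form is reached in four steps: (1) a tail-inversion making $X_2$ invertible; (2) using $X_2$ invertible, reset $Y_1$ to $S$ (this $Y$-step leaves $X_2$ invertible); (3) a transpose tail-inversion making $Y_2$ invertible while preserving $Y_1=S$; (4) using $Y_2$ invertible, reset $X_1$ to $T^T$, which forces $X_2=Y_2^{-1}(K-ST^T)=:N$ while leaving $Y_2=:D$ invertible. The resulting entry is the required normal form with $D\in GL_r(Q)$ and $N\in M_r(Q)$. The dual statement (with $N$ invertible and $D$ arbitrary) follows by the transpose sequence: invert $Y_2$ first, reset $X_1$, invert $X_2$, and finally reset $Y_1$.

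The one point needing care — and the main obstacle — is the ordering. Resetting a leading block to a scattered identity pins the corresponding tail block (once $Y_1=S$ and $X_1=T^T$ one only has $DN=K-ST^T$, a rank-constrained relation), so a naive attempt to clean both leading blocks first destroys the invertibility needed to clean the second. The chosen order \textbf{invert $X_2$, clean $Y_1$, invert $Y_2$, clean $X_1$} circumvents this because each tail-inversion fixes the leading block already cleaned; consequently, by the time $X_1$ is reset in step~(4) the tail $Y_2=D$ is guaranteed invertible, with the singularity (when $K-ST^T$ is singular) absorbed harmlessly into $N$. I would also record the two routine facts underlying the moves, namely that the idempotent argument genuinely produces an invertible $V$ (via \eqref{eqn_R}) and that the scattered-identity matrices $[\,S\mid D\,]$ and $[\,T^T;\,N\,]^{\,}$ are legitimate elements of $\Y_r,\X_r$ in the correct regions, both of which use only the hypothesis $i_r,j_r\le n-r$.
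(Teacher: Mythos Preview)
Your proposal is correct and follows essentially the same approach as the paper's proof. The paper also works entirely within the region \eqref{eqn_theregagain} (so that Lemma~\ref{lem_zero} makes every $\lambda$-edge strong), uses the same $\gr$- and $\gl$-idempotent trick borrowed from the proof of Theorem~\ref{thm_multtables} to force a tail block invertible, and then uses that invertibility to reset the opposite leading block to a scattered identity; your ``invert $X_2$, clean $Y_1$, invert $Y_2$, clean $X_1$'' sequence is exactly the six moves the paper records in its two displayed tables (Figures~\ref{fig_step1} and~\ref{fig_step2}), merely packaged as two three-move stages rather than four named steps.
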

\begin{proof}
We prove the first statement, the second is proved using a dual argument. The proof has two steps which are illustrated
in Figures~\ref{fig_step1} and \ref{fig_step2} respectively. Let $[P|A], [Z|B]^T \in \mathrm{Mat}_{r \times n}(Q)$ be
arbitrary such that $A,B \in M_r(Q)$,
$([P|A], [Z|B]^T)$ belongs to the region \eqref{eqn_theregagain},
and
\[
[P|A] \left[ \begin{array}{c} Z \\\hline B \end{array} \right] = PZ + AB = K.
\]
We proceed along similar lines as in the proof of Theorem~\ref{thm_multtables}. We shall construct a path where the
entire path belongs to the region \eqref{eqn_theregagain},
and consequently by Lemma~\ref{lem_zero} this path will automatically be a strong path.

For the first step of the proof, $A, B \in M_{r}({Q})$ and since this semigroup is regular there is an idempotent $U$ with $U \gr AB$, so $UA \gr AB$ and then by \eqref{eqn_R} there is an invertible matrix $C \in GL_r(Q)$ satisfying
$
UAC = UAB = AB.
$
Since $C$ is invertible, $M_r(Q) C = M_r(Q)$ and so there exists a matrix $L \in M_r(Q)$ such that the equation
\[
I_{r \times (n-r)}(i_1|\cdots|i_r)
Z
+
LC
= K
\]
is satisfied. Combining these observations, in Figure~\ref{fig_step1} we construct a strong path in the region
\eqref{eqn_theregagain} from $ \left( [P|A], \left[ \begin{smallmatrix} Z \\\hline B \end{smallmatrix} \right] \right)
$ to $ \left( [I_{r \times (n-r)}(i_1|\cdots|i_r)|L], \left[ \begin{smallmatrix} Z \\\hline C \end{smallmatrix} \right]
\right). $

For the second step of the proof, we use a dual argument to find a strong path, in the same region, from
\[
\left( [I_{r \times (n-r)}(i_1|\cdots|i_r)|L], \left[ \begin{array}{c} Z \\\hline C \end{array} \right] \right)
\]
to
\[
\left( [I_{r \times (n-r)}(i_1|\cdots|i_r)|D], \left[ \begin{array}{c} I_{r \times (n-r)}(i_1|\cdots|i_r)^T \\\hline N
\end{array} \right] \right),
\]
where $D \in GL_r(Q)$. This path is given in Figure~\ref{fig_step2} where $V$ is an idempotent with $V \gl LC$, $D \in
GL_r(Q)$ and $ LC = LCV = DCV $. Here $D$ exists by \eqref{eqn_L} since $CV \gl LC$. Then, using the fact that $D$ is
invertible, $N \in M_r(Q)$ is chosen so that the equation
\[
I_{r \times (n-r)}(i_1|\cdots|i_r)
I_{r \times (n-r)}(i_1|\cdots|i_r)^T
+
DN
=
K
\]
is satisfied. This completes the proof of the lemma. \end{proof}
\renewcommand{\arraystretch}{0.5}
\begin{figure}[t]
{\small
\[
\begin{array}{cl||cc}
    &           &       \begin{array}{c}
    \left[ \begin{array}{c} \ \\ Z \\ \ \\ \hline \ \\ B  \end{array} \right]  \\
    \ \end{array}
    &           \begin{array}{c}
    \left[ \begin{array}{c} \ \\ Z \\ \ \\ \hline \ \\ C  \end{array} \right]  \\
    \ \end{array} \\
    \hline\hline
    & & &  \\
& \left[ \begin{array}{c} \ \ P \ \ |A \end{array} \right]
&       PZ + AB = K &                                         \\
& & &   \\
& \left[ \begin{array}{c} \ \ P \ \ |UA \end{array} \right]
&       PZ + UAB = K &  PZ + UAC = K   \\
& & &   \\
& \left[ \begin{array}{c} \; I_{r \times (n-r)}(i_1|\cdots|i_r) \; |L \end{array} \right]
&       &       K    \\
\end{array}
\]
}
\caption{Proof of Lemma~\ref{lem_lemmaAnew}: a strong path in the region
$(i_1 < \cdots < i_r)
\times
(j_1 < \cdots < j_r)$.
}
\label{fig_step1}
\end{figure}
\renewcommand{\arraystretch}{1}
\renewcommand{\arraystretch}{0.2}
\begin{figure}[t]
{\small
\[
\begin{array}{cl||ccc}
    &           &      \begin{array}{c} \left[ \begin{array}{c}  \ \\ Z \\ \ \\ \hline \ \\ C \end{array} \right] \\ \ \end{array}
    &         \begin{array}{c} \left[ \begin{array}{c} \ \\ Z \\ \ \\ \hline \ \\ CV \end{array} \right] \\ \ \end{array}
    &         \begin{array}{c}  \left[ \begin{array}{c} \ \\ I_{r \times (n-r)}(j_1|\cdots|j_r)^T \\ \ \\ \hline \ \\ N \end{array} \right] \\ \ \end{array}
    \\ \hline\hline
    & & & & \\
& \left[ \begin{array}{c} \; I_{r \times (n-r)}(i_1|\cdots|i_r) \; |L \end{array} \right]
&       K & K &                               \\
& & &   & \\
& \left[ \begin{array}{c} \; I_{r \times (n-r)}(i_1|\cdots|i_r) \; |D \end{array} \right]
&               &   K & K \\
\end{array}
\]
}
\caption{Proof of Lemma~\ref{lem_lemmaAnew}: a strong path in the region
$(i_1 < \cdots < i_r) \times (j_1 < \cdots < j_r)$.
}
\label{fig_step2}
\end{figure}
\renewcommand{\arraystretch}{1}
\begin{lem}
\label{lem_gettinginto1212}
Let $K$ be an entry of the form
\begin{align}
\label{eqn_newK} K = \left[ \begin{array}{c} I_{r \times (n-r)}(i_1|\cdots|i_r) \; | \;  L
\end{array} \right]
\left[ \begin{array}{c}
\ \\
I_{r \times (n-r)}(j_1|\cdots|j_r)^T \\
\ \\
\hline \\
N \\
\
\end{array} \right]
\end{align}
where $i_r \leq n-r$, $j_r \leq n-r$, $N \in M_r(Q)$ and $L \in GL_r(Q)$. Then there is a strong path from
\eqref{eqn_newK} to an entry $K$ in the region $(1 < 2 < \cdots < r) \times (1 < 2 < \cdots < r)$.
\end{lem}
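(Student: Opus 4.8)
The plan is to reduce the pair of region indices $(i_1<\cdots<i_r)$ and $(j_1<\cdots<j_r)$ down to $(1<\cdots<r)$ in two symmetric phases: first collapse the \emph{column} index set while keeping the row $Y=[I_{r\times(n-r)}(i_1|\cdots|i_r)\,|\,L]$ fixed, and then, after a re-normalisation, collapse the \emph{row} index set. I would carry out the column phase by descending the covering relations of the order $\preceq$. At a typical step the current column index set is $J=\{j_1<\cdots<j_r\}\neq\{1,\dots,r\}$; I let $m$ be least with $j_m\neq m$ and aim to produce a single strong edge moving the current entry to one whose column region is $J'=\{j_1,\dots,j_{m-1},j_m-1,j_{m+1},\dots,j_r\}$, keeping the value $K$. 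Iterating this brings the column region to $(1<\cdots<r)$; applying the dual form of Lemma~\ref{lem_lemmaAnew} then re-normalises the entry so that its $\X_r$-side block becomes invertible, and the row phase is literally the transpose of the column phase (strong edges and the table $\mathbb{T}_{n,r}$ being invariant under $(Y,X)\mapsto(X^{T},Y^{T})$). The terminal entry then lies in the region $(1<2<\cdots<r)\times(1<2<\cdots<r)$, as required.

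For the individual column step I exploit the invertibility of $L$. Writing the current column vertex as $X=\left[\begin{smallmatrix} I_{r\times(n-r)}(J)^{T} \\ N\end{smallmatrix}\right]$, I take $X'=\left[\begin{smallmatrix} I_{r\times(n-r)}(J')^{T} \\ N'\end{smallmatrix}\right]$ with the scattered-identity top shape for $J'$, and solve $YX'=K$ for the bottom block $N'$; this is possible precisely because $L$, the last $r$ columns of $Y$, is invertible. Comparing $X$ and $X'$, their top parts differ only in the $m$th column (where $e_{j_m}$ is replaced by $e_{j_m-1}$), and since $N-N'$ is likewise supported in its $m$th column, one finds $X-X'=z\,e_m^{T}$ for a single column $z$ whose top part is $e_{j_m}-e_{j_m-1}$; thus $X-X'$ has rank at most one. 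The key point is that $z\notin\Coll X$: the columns of $X$ have top parts supported on $J$, whereas $z$ has a nonzero entry in coordinate $j_m-1$, which lies outside $J$ (it is $<j_1$ when $m=1$, and strictly between $j_{m-1}=m-1$ and $j_m$ otherwise). Hence $X_1,\dots,X_r,z$ are linearly independent.

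Because of this independence I can prescribe the values of a single row vector $w$ on $X_1,\dots,X_r$ and on $z$ independently, and so choose $w$ with $wX_b=\delta_{mb}$ for all $b$ and $wz=0$. Replacing the $m$th row of the tree witness $I(j_1|\cdots|j_r)$ by $w$ then yields a matrix $Y_1$ with $Y_1X=Y_1X'=I_r$ (using that $X_b=X'_b$ for $b\neq m$, so that rows $a\neq m$ already give $e_a^{T}$ against both $X$ and $X'$). This $Y_1$ is exactly the witness required for a type~(i) strong edge between $(Y,X)$ and $(Y,X')$, and together with Lemma~\ref{lem_zero} the step is realised by a strong path.

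I expect the main obstacle to be that the witness must genuinely lie in $\Y_r$, i.e.\ be in reduced row echelon form: $w$ must in addition vanish on the pivot columns $j_a$ ($a\neq m$) and carry a clean leading $1$ in a fresh pivot slot lying between $j_{m-1}$ and $j_{m+1}$. This adds roughly $r$ further linear constraints to the $r+1$ already imposed, and the hypothesis $r<n/3$ is what supplies enough free coordinates (more than $2r$ of the first $n-r$ positions are available outside $J$) to satisfy all of these simultaneously while keeping $Y_1$ of rank $r$. Granting this echelon normalisation, each column step is a strong edge, the two phases chain into a single strong path, and the path ends at an entry of value $K$ in the top-left region, completing the proof.
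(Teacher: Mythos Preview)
Your two-phase strategy matches the paper's, and the linear algebra behind your column step is sound. The gap is exactly where you say ``granting this echelon normalisation'': a dimension count is not a proof that a witness in $\Y_r$ exists, since the RRE requirement is not a linear condition. Concretely, if you try to keep the pivot of row $m$ at column $j_m$, then the constraints $w_{j_a}=0$ for $a\neq m$ combined with $wX=e_m^T$ force $w''N=0$ on the last $r$ coordinates $w''$ of $w$; when $N$ happens to be invertible this yields $w''=0$, whence $wz=w_{j_m}-w_{j_m-1}=1\neq 0$, so no such witness exists with that pivot choice.

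The gap is nonetheless easily closed: take $Y_1=I(j_1|\cdots|j_{m-1}|j_m-1,j_m|j_{m+1}|\cdots|j_r)$, placing the pivot of row $m$ at $j_m-1$ and keeping an extra $1$ at $j_m$. One checks directly that $Y_1X=Y_1X'=I_r$, and $Y_1$ is in RRE form since $j_{m-1}=m-1<j_m-1$ and $j_m-1\notin J$; no appeal to $r<n/3$ is needed here. The paper organises the step slightly differently: rather than $j_m\to j_m-1$, it jumps $j_m\to m$ in two edges, first using the plain witness $I_{r\times n}(j_1|\cdots|j_r)$ against a column whose top block carries $1$'s at both rows $m$ and $j_m$ in its $m$th column (so already lies in the region with $j_m$ replaced by $m$), and then cleaning up inside that region via Lemma~\ref{lem_zero}. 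Either route works; the paper's has the advantage that no RRE verification of the witness is required.
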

\begin{proof}
\renewcommand{\arraystretch}{0.2}
\begin{figure}[t]
{\footnotesize
\[
\begin{array}{l||cccc}
            &       (j_1 < \cdots < j_r) & (j_1 < \cdots < j_{m-1} < m < j_{m+1} < \cdots < j_r)
  \\[0.5ex]
                &       \left[ \begin{array}{c} \ \\ I(j_1|\cdots|j_r)^T \\ \ \\ \hline \ \\ N \end{array} \right]
    &         \left[ \begin{array}{c} \ \\ I(j_1|\cdots|j_{m-1}|m,j_m|j_{m+1}|\cdots|j_r)^T \\ \ \\ \hline \ \\ N_1 \end{array} \right]
\\[4ex] \hline\hline
     & & & \\
 \left[ \begin{array}{c} I(i_1|\cdots|i_r) \; |L \end{array} \right]
&       K       &   K \\
& & & & \\
 I_{r \times n}(j_1|\cdots|j_r)
        &   I &         I \\
\end{array}
\]
}
\caption{Stage 1 of the proof of Lemma~\ref{lem_gettinginto1212}.}
\label{fig_stage1new}
\end{figure}
\renewcommand{\arraystretch}{1}
\renewcommand{\arraystretch}{0.2}
\begin{figure}[t]
{\footnotesize
\[
(j_1 < \cdots < j_{m-1} < m < j_{m+1} < \cdots < j_r)
\]
\[
\begin{array}{l||cccc}
&         \left[ \begin{array}{c} \ \\ I(j_1|\cdots|j_{m-1}|m,j_m|j_{m+1}|\cdots|j_r)^T \\ \ \\ \hline \ \\ N_1 \end{array} \right]
    &           \left[ \begin{array}{c} \ \\ I(j_1|\cdots|j_{m-1}|m|j_{m+1}|\cdots|j_r)^T \\ \ \\ \hline \ \\ N_2 \end{array} \right]
    \\[4ex] \hline\hline
     & & & \\
     \mathcal{I}
        &   K &         K \\
\end{array}
\]
}
\caption{Stage 2 of the proof of Lemma~\ref{lem_gettinginto1212}, where $\mathcal{I} = [ I(i_1|\cdots|i_r) |L ]$. This is a strong edge because it is contained in a single region.}
\label{fig_stage2new}
\end{figure}
\renewcommand{\arraystretch}{1}
The proof has two stages, first we find a strong path into the region
\begin{align}
\label{eqn_cat}
(i_1 < \cdots < i_r) \times (1 < \cdots < r)
\end{align}
and then apply
Lemma~\ref{lem_lemmaAnew} and
a dual argument to complete the proof.
To simplify notation in the proof let
\[
I(i_1|\cdots|i_r)
=
I_{r \times (n-r)}(i_1|\cdots|i_r).
\]
For the first stage we prove by induction on the order $\preceq$, defined in
Section~\ref{sec_preliminaries}, that there is a path into the region \eqref{eqn_cat}. If
\[
(j_1 < \cdots < j_r)
=
(1 < \cdots < r)
\]
then we are done, so suppose otherwise and let $j_m$ be the least $j_t$ such that $j_t \neq t$. Then there is a strong path with two edges from
\[
[I(i_1|\cdots|i_r)|L] \left[ \begin{array}{c} \ \\ I(j_1|\cdots|j_r)^T \\ \ \\ \hline N \end{array} \right]
\]
to
\begin{align}
\label{eqn_dino} [I(i_1|\cdots|i_r)|L] \left[ \begin{array}{c} \ \\ I(j_1|\cdots|j_{m-1}|m|j_{m+1}|\cdots|j_r)^T \\ \
\\ \hline N_2 \end{array} \right]
\end{align}
given in Figures~\ref{fig_stage1new} and \ref{fig_stage2new}. Here $N_1, N_2 \in M_r(Q)$ have been chosen in such a way
that the appropriate entries in the table are equal to $K$. Such choices for $N_1$ and $N_2$ are possible since $L$ is
invertible. This completes the proof of the first stage since
\begin{eqnarray*}
& & \{ j_1,  \ldots, j_{m-1}, m, j_{m+1}, \ldots, j_r \} \\
& \prec &
\{ j_1, \ldots, j_{m-1}, j_m, j_{m+1}, \ldots, j_r \}
\end{eqnarray*}
and so by induction there is a strong path from \eqref{eqn_dino} to an entry of the form
\begin{align}
\label{eqn_chicken} K = [I(i_1|\cdots|i_r)|L] \left[ \begin{array}{c} \ \\ I(1|2|\cdots|r)^T \\ \ \\ \hline Z
\end{array} \right].
\end{align}
Next by Lemma~\ref{lem_lemmaAnew} there is a strong path from \eqref{eqn_chicken} to
an entry
\begin{align}
\label{eqn_guitar} K = [I(i_1|\cdots|i_r)|L'] \left[ \begin{array}{c} \ \\ I(1|2|\cdots|r)^T \\ \ \\ \hline Z'
\end{array} \right]
\end{align}
where $Z' \in GL_r(Q)$ is invertible and $L'$ need not be. Then a dual argument to the one above gives a strong path
from \eqref{eqn_guitar} to an entry in the region $(1<\cdots < r) \times (1<\cdots < r)$, completing the proof of the
lemma. \end{proof}
Combining Lemmas~\ref{lem_frombefore}, \ref{lem_lemmaAnew} and \ref{lem_gettinginto1212} gives the following result
showing that we can strongly connect a large portion of the table $\mathbb{T}_{n,r}$.
This portion of the table is represented by the large light grey region in Figure~\ref{fig_table}.

\begin{cor}
\label{cor_thebigbox} For every matrix $K \in M_r(Q)$ the $\lambda$-graph of $K$ in $\mathbb{T}_{n,r}$ restricted to
\begin{align}
\label{eqn_bigbox}
\bigcup_{i_r \leq n-r, \; j_r \leq n-r}
(i_1 < i_2 < \cdots < i_r)
\times
(j_1 < j_2 < \cdots < j_r)
\end{align}
is strongly connected.
\end{cor}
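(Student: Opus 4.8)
The plan is to assemble the three preceding lemmas into a single statement, using the top-left diagonal region
\[
R_0 = (1 < 2 < \cdots < r) \times (1 < 2 < \cdots < r)
\]
as a common hub through which every occurrence of $K$ in the big box \eqref{eqn_bigbox} can be routed. Note first that $R_0$ itself lies inside \eqref{eqn_bigbox}: since $r < n/3$ we have $r < n - r$, so for $R_0$ the largest indices satisfy $i_r = j_r = r \leq n-r$.

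First I would take an arbitrary vertex of the $\lambda$-graph of $K$ restricted to \eqref{eqn_bigbox}, that is, an entry equal to $K$ lying in some region $(i_1 < \cdots < i_r) \times (j_1 < \cdots < j_r)$ with $i_r \leq n-r$ and $j_r \leq n-r$. By Lemma~\ref{lem_lemmaAnew} there is a strong path from this vertex to an entry of the normalised form appearing in that lemma, in which the left-hand block $D$ is invertible. This normalised entry has invertible left block $L := D \in GL_r(Q)$ and $N \in M_r(Q)$, so it meets exactly the hypotheses of Lemma~\ref{lem_gettinginto1212}; applying that lemma then yields a strong path from it into $R_0$. Concatenating, every occurrence of $K$ in \eqref{eqn_bigbox} is joined by a strong path to some occurrence of $K$ in $R_0$.

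Next I would invoke Lemma~\ref{lem_frombefore}, which says that any two occurrences of $K$ inside $R_0$ are joined by a strong path. Hence, given two arbitrary vertices $(Y,X)$ and $(Y',X')$ of the $\lambda$-graph of $K$ in \eqref{eqn_bigbox}, the strategy is to route $(Y,X)$ to some $(Y_0,X_0) \in R_0$ and $(Y',X')$ to some $(Y_0',X_0') \in R_0$ by the first step, then join $(Y_0,X_0)$ to $(Y_0',X_0')$ by the second step; the resulting concatenation is a strong path from $(Y,X)$ to $(Y',X')$, establishing strong connectedness.

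Since the argument is a direct concatenation of the three lemmas, I do not expect a genuine obstacle; the only points requiring care are that the output of Lemma~\ref{lem_lemmaAnew} is precisely the input of Lemma~\ref{lem_gettinginto1212}, the invertibility of the left block $D = L$ being exactly what both demand, and that the strong paths produced stay inside \eqref{eqn_bigbox}. The latter holds because the path in Lemma~\ref{lem_lemmaAnew} stays within a single region of \eqref{eqn_bigbox}, while the inductive path in Lemma~\ref{lem_gettinginto1212} only moves the column region downward under $\preceq$ toward $(1 < \cdots < r)$, which never increases the largest index beyond $n-r$, and symmetrically for the final dual step on the row region.
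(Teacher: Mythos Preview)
Your proposal is correct and follows exactly the approach the paper intends: the paper simply states that the corollary is obtained by combining Lemmas~\ref{lem_frombefore}, \ref{lem_lemmaAnew} and \ref{lem_gettinginto1212}, and you have spelled out precisely how the three lemmas chain together, including the match between the invertible block $D$ output by Lemma~\ref{lem_lemmaAnew} and the hypothesis $L \in GL_r(Q)$ of Lemma~\ref{lem_gettinginto1212}. Your additional remark that the intermediate vertices of the strong paths remain inside the subtable~\eqref{eqn_bigbox} is a detail the paper leaves implicit, and your justification for it is sound.
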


We are now in a position to complete the proof of the main result of this section, Theorem~\ref{thm_thefulltable}. In
the following proof, for an $r \times n$  matrix $A$, with $r < n$, we shall use $A[i]$ to denote its $i$th column.
Dually, given an $n \times r$
matrix $B$, with $r < n$, we shall use $B[i]$ for its $i$th row.

One of the key steps in the following proof comes in the second paragraph where we define the number $t$ which we need to satisfy $t > r$ in order to establish linear dependence of a set of $t$ vectors in an $r$-dimensional vector space. For this argument to be valid we need
to make use of our assumption that $r < n/3$.

\begin{proof}[Proof of Theorem~\ref{thm_thefulltable}]
By Corollary~\ref{cor_thebigbox} it suffices to show that there is a strong path from every entry $K \in M_r(Q)$ in
$\mathbb{T}_{n,r}$ to an entry in the subtable \eqref{eqn_bigbox}. To this end, let $A \in \Y_r$, $B \in \X_r$ and let
$K = AB \in M_r(Q)$. Moreover, suppose that $LC(A) = \{i_1,i_2,\ldots, i_r \}$, $LR(B) = \{j_1,j_2,\ldots, j_r\}$ and
$i_r > n-r$. We shall prove that there exists a strong path from $(A,B)$ to $(A'',B')$ where the $LR(B')=LR(B)$ and
$LC(A'')$ is strictly less than $LC(A)$ in the \emph{lexicographic order} on the $r$-element subsets of
$\{1,\dots,n\}$. Using this it follows by induction that there is a strong path from $(A,B)$ to $(A_1,B_1)$ where
$LC(A_1)\subseteq\{1,\dots,n-r\}$ and $LR(B_1)=LR(B)$. Then by a dual argument there is a strong path from $(A_1,B_1)$
to $(A_2,B_2)$ where $LC(A_2)=LC(A_1)$, while $LR(B_2)\subseteq\{1,\dots,n-r\}$, and hence $(A_2,B_2)$ belongs to the
region \eqref{eqn_bigbox}.

Let
\[
\{k_1, \ldots, k_t \} = \{ 1,2,\ldots, n\} \setminus (LC(A) \cup LR(B))
\]
be the set of indices distinct from all indices of leading columns of $A$ and leading rows of $B$, where $k_1 <
k_2 < \cdots < k_t$. Since $r < n/3$ and $|LC(A)| = |LR(B)| = r$ it follows that
\[
t = | \{k_1, \ldots, k_t \} | = | \{ 1,2,\ldots, n\} \setminus (LC(A) \cup LR(B))| \geq n - 2r > r.
\]
Therefore since $t > r$, and the column space of $A$ has dimension $r$, it follows that there exists some $s$ such that column $k_s$ can be expressed as a (right) linear combination of the columns $\{k_{s+1}, k_{s+2}, \ldots, k_t \}$.
Write
\[
A[k_s] = A[k_{s+1}]\lambda_{s+1} + A[k_{s+2}]\lambda_{s+2} + \cdots + A[k_{t}]\lambda_{t},
\]
where $\lambda_i \in \F$.
Let $C$ be the $n \times r$ matrix defined by
\[
C[k_s] = -B[k_s], \quad
C[k_{s+j}] = \lambda_{s+j}B[k_{s}] \quad (1 \leq j \leq t-s),
\]
and all other rows of $C$ are set as the zero vector. Computing $AC$ we obtain
\begin{align*}
AC
&=
A[1]C[1] + A[2]C[2] + \cdots + A[n]C[n]
\\
&=
A[k_s]C[k_s] + A[k_{s+1}]C[k_{s+1}] + \cdots + A[k_n]C[k_n] + 0_{r \times r}
\\
&=
A[k_s](-B[k_s]) + A[k_{s+1}]\lambda_{s+1}B[k_{s}] + \cdots + A[k_t]\lambda_tB[k_s]
\\
&=
(-A[k_s] + A[k_{s+1}]\lambda_{s+1} + A[k_{s+2}]\lambda_{s+2}  + \cdots + A[k_t]\lambda_t)B[k_{s}] = 0_{r \times r}.
\end{align*}
Now define $B' = B+C$. We have
\[
B'[k_s] = B[k_s] + C[k_s] = 0_{1 \times r}.
\]
Moreover, $B'$ is an $n \times r$ matrix, whose transpose is in RRE form, and satisfies $LR(B') = LR(B)$. To see this,
consider an arbitrary $1 \times r$ row vector  of $ B' = B+C $ and compare it with the corresponding row of $B$. The
only rows that are different are those indexed by the $k_i$, where $i \geq s$, and all of the leading rows of $B$ are
left unchanged. Row $k_s$ of $B'$ is now the zero vector so this change certainly keeps the transpose of the matrix in RRE form. Now
consider some row $k_{s+j}$ with $1 \leq j \leq t-s$. By definition we have
\[
B'[k_{s+j}] = B[k_{s+j}] + \lambda_{s+j}B[k_s].
\]
Let $[b_1,b_2,\ldots,b_r]$ denote this row vector. Suppose that $b_v \neq 0$ for some $v$. Then it follows that either
column $v$ of $B[k_{s+j}]$ is non-zero or column $v$ of $B[k_s]$ must be non-zero which in turn, since the transpose of $B$ is in RRE
form, implies that the leading row $j_v$ of $B$ (that is, the first row of $B$ to have a non-zero term in the $v$th
column) must satisfy $j_v < k_{s+j}$. This argument shows that the transpose of $B'$ is in RRE form, and moreover that $LR(B')=LR(B)$.
Since
\[
AB' = A(B+C) = AB + AC = AB + 0 = AB = K,
\]
and $LR(B')=LR(B)$ it follows that there is a strong edge between $(A,B)$ and $(A,B')$ in $\mathbb{T}_{n,k}$ where $B'$
satisfies $B'[k_s] = 0_{1 \times r}$.

Next we claim that above we may also choose $k_s$ so that it satisfies $k_s < i_r$.
Indeed, suppose that $k_s > i_r$. Consider column $k_1$ of $A$. Certainly we have $k_1 \in \{1, \ldots, n-r\}$ since $r
< n/3$. Now let $A'$ be the matrix obtained by replacing column $A[k_s]$ of $A$ by a copy of $A[k_1]$ and leaving all
the other columns unchanged. Since $k_s > i_r$ the matrix $A'$ is still in RRE form, and since row $k_s$ of $B'$ is the
zero vector the product is not affected and we have $ AB = AB' = A'B'. $ Also $A'$ is in the same region as $A$ so the
corresponding $\lambda$-path is strong. Now column $k_1$ and column $k_s$ of $A'$ are equal and so column $k_1$ is a
linear combination of columns indexed by $\{ k_2, k_3, \ldots, k_t \}$. But $k_1 \leq n-r$ while $i_r \geq n-r+1$ by
assumption, and so $k_1 < i_r$. Therefore, running once again through the
argument given in the previous paragraph,
we may suppose without loss
of generality that $k_s < i_r$.

So now suppose that $k_s < i_r$ in $A$. Take the least $v$ such that $i_v > k_s$, which must exist since $k_s < i_r$.
Then define a matrix $A''$ obtained by replacing column $k_s$ of $A$ by a copy of $A[i_v]$ (i.e. the unit vector with
zeros everywhere except in position $v$). Clearly $A''$ is in RRE form. Again we see that $ AB = AB' = A''B', $ since
$B'[k_s]$ is the zero vector $0_{1 \times r}$. Also the edge between $AB'$ and $A''B'$ is easily seen to be a strong
edge by considering the column indexed by the scattered identity matrix $I(i_1, i_2, \ldots, i_r)^T$, and computing
\[
A \; I(i_1, i_2, \ldots, i_r)^T = A'' \; I(i_1, i_2, \ldots, i_r)^T = I_r.
\]
The proof is completed by observing that
\[
LC(A'') = \{ i_1,  i_2, \ldots, i_{v-1}, k_s, i_{v+1}, \ldots, i_r \}
\]
is strictly less than
\[
LC(A) = \{ i_1, i_2, \ldots, i_r \}
\]
in the lexicographic ordering on $r$-element subsets of $n$, and $LR(B') = LR(B)$.
\end{proof}

\section{Completing the proof}
\label{sec_uncovering}

By this stage in the proof we have succeeded in identifying all of the labels in the table with the corresponding
elements in the group $GL_r(\F)$. So after performing the identifications $f_{A,B} = f_{X,Y}$ whenever $BA = YX$ we
obtain a presentation $\mathcal{P}_{r,n}'$ with generators $\mathcal{F}' = \{ f_{U} : U \in GL_r(Q)  \}$.
As explained in the outline of the proof of the main theorem given in Section~\ref{sec_outline}, the mapping which sends each generator $f_U$ of $\mathcal{F}'$ to the matrix $U^{-1} \in GL_r(Q)$ defines a homomorphism from the group defined by $\mathcal{P}_{r,n}'$ onto the group $GL_r(Q)$. The following lemma shows that this map is actually an isomorphism by showing that every word over $\mathcal{F}'$ is equal to one of the generators.

\begin{lem}
\label{lem_multiplicationtable}
For every pair $A, B \in GL_r(Q)$
the relation
\[
f_B f_A = f_{AB}
\]
appears in the presentation $\mathcal{P}_{r,n}'$.
\end{lem}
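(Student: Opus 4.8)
The plan is to produce, for each pair $A,B \in GL_r(Q)$ with $A \neq I_r$ and $B \neq I_r$, a single non-degenerate rectangular band $(X,X',Y,Y')$ in $D_r$ whose relation \eqref{eqn_bottom}, once the conclusions of Stages~1 and~2 are fed in, collapses precisely to $f_B f_A = f_{AB}$. Recall that after the identifications of Stage~2 each generator $f_{X,Y}$ has become $f_{YX}$, and that Stage~1 (Lemma~\ref{lem_hash}) gives $f_{I_r}=1$. I would first dispose of the degenerate cases directly from $f_{I_r}=1$: if $A=I_r$ then $AB=B$ and $f_B f_A = f_B\cdot 1 = f_B = f_{AB}$, and the case $B=I_r$ is symmetric. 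So from now on assume $A\neq I_r$ and $B\neq I_r$.

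The crux is the explicit construction. Writing all matrices in blocks of $r$ columns (respectively rows) and padding the remaining columns and rows with zeros --- which is possible since $n>3r$ --- I would take
\[
Y = [\, I_r \mid I_r \mid 0 \,], \qquad Y' = [\, I_r \mid A \mid A-I_r \,],
\]
\[
X = \begin{bmatrix} I_r \\ 0 \\ I_r \end{bmatrix}, \qquad X' = \begin{bmatrix} I_r \\ B-I_r \\ I_r \end{bmatrix}.
\]
Since in each of these the leading block of $r$ columns (respectively rows) is $I_r$ and every other block sits in non-pivot positions, all four matrices are automatically in reduced row echelon form; hence $Y,Y'\in\Y_r$ and $X,X'\in\X_r$. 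A direct computation then yields
\[
YX = I_r, \qquad YX' = B, \qquad Y'X = A, \qquad Y'X' = AB,
\]
the last because $I_r + A(B-I_r) + (A-I_r) = AB$. In particular all four products are invertible, so each of the four $\gh$-classes contains an idempotent and the four generators are defined; and $A\neq I_r$, $B\neq I_r$ force $Y\neq Y'$ and $X\neq X'$, so the square is non-degenerate.

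It then remains to check the rectangular band condition \eqref{eq_99}, which here reads
\[
(YX)(Y'X)^{-1} = I_r\cdot A^{-1} = A^{-1} = B(AB)^{-1} = (YX')(Y'X')^{-1},
\]
and so holds. By Theorem~\ref{rectsing} the square $(X,X',Y,Y')$ is therefore singular, i.e.\ lies in $\Sigma$, and relation \eqref{eqn_bottom} applies:
\[
f_{X,Y}^{-1} f_{X,Y'} = f_{X',Y}^{-1} f_{X',Y'}.
\]
Substituting $f_{X,Y}=f_{I_r}=1$, together with the Stage~2 identifications $f_{X,Y'}=f_A$, $f_{X',Y}=f_B$ and $f_{X',Y'}=f_{AB}$, this becomes $f_A = f_B^{-1} f_{AB}$, which rearranges to $f_B f_A = f_{AB}$, as required.

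I expect the one real obstacle to be finding the construction above, rather than verifying it. The reduced row echelon constraint defining $\X_r$ and $\Y_r$ is rigid, and the obvious attempt to force $YX,YX',Y'X,Y'X'$ to equal $I_r,B,A,AB$ by putting the data into the pivot block overdetermines those blocks and fails. The idea that unlocks everything is to keep the pivot block of all four matrices equal to $I_r$ and to store $A$, $B$, $AB$ in the free off-pivot blocks, using the entries $A-I_r$ and $B-I_r$ precisely so that the cross term $A(B-I_r)$ reassembles into $AB$ in the fourth product. Given this choice, the rest of the proof is routine verification.
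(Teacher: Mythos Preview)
Your argument is correct and follows the same strategy as the paper: exhibit a single singular square whose four corner products are $I_r$, $A$, $B$, $AB$, then read off $f_B f_A = f_{AB}$ from relation \eqref{eqn_bottom} after the Stage~1 and Stage~2 identifications. The only difference is the concrete choice of square. The paper uses
\[
Y_1=[\,0\mid I_r\mid A\mid 0\,],\quad Y_2=[\,0\mid 0\mid I_r\mid 0\,],\quad
X_1=\begin{bmatrix}0\\0\\I_r\\0\end{bmatrix},\quad
X_2=\begin{bmatrix}I_r\\0\\B\\0\end{bmatrix},
\]
giving $Y_1X_1=A$, $Y_1X_2=AB$, $Y_2X_1=I_r$, $Y_2X_2=B$. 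Because the pivot blocks of $Y_1,Y_2$ (and of $X_1,X_2$) sit in different positions, this square is never degenerate, so no separate treatment of $A=I_r$ or $B=I_r$ is needed. Your square keeps all four matrices in the region $(1<\cdots<r)\times(1<\cdots<r)$, which is perhaps conceptually tidier but forces $Y=Y'$ when $A=I_r$ and $X=X'$ when $B=I_r$; your handling of those degenerate cases via $f_{I_r}=1$ is fine, though strictly speaking those instances of $f_Bf_A=f_{AB}$ are then consequences of the presentation rather than literal relations in \eqref{eqn_bottom}. Either construction does the job.
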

\begin{proof}
We show this relation appears among the relations \eqref{eqn_bottom} by finding an appropriate singular square. Such a
singular square is illustrated in Figure~\ref{fig_theend}. This square is singular by \eqref{eq_99} since $A I_r^{-1} = (AB) B^{-1}$. This singular square then gives rise to the relation $f_A^{-1} = f_{AB}^{-1} f_B$ in \eqref{eqn_bottom}, or equivalently $f_B f_A = f_{AB}$, completing the proof.  \end{proof}

\renewcommand{\arraystretch}{0.2}
\begin{figure}[t]
\[
\begin{array}{cl||cc}
    &           &        \left[ \begin{array}{c}
    0_{r \times r} \\ \ \\ \hline \ \\
    0_{r \times r} \\ \ \\ \hline \ \\
    I_r \\ \ \\ \hline \ \\
    \\ 0_{(n-3r) \times r} \\ \ \end{array} \right]
    &  \left[ \begin{array}{c}
    I_r \\ \ \\ \hline \ \\
    0_{r \times r} \\ \ \\ \hline \ \\
    B \\ \ \\ \hline \ \\
    \\ 0_{(n-3r) \times r} \\ \ \end{array} \right]
    \\[8ex] \hline\hline
    & &  \\
  & \left[ \begin{array}{cccccccc} 0_{r \times r} & \vline & I_r & \vline & A & \vline & 0_{r \times (n-3r)} &
\end{array}
\right] &      A & AB \\
                          & & & \\
                          &
\left[ \begin{array}{cccccccc} 0_{r \times r} & \vline & 0_{r \times r} & \vline & I_r & \vline & 0_{r \times (n-3r)} &
\end{array} \right]
&       I_r
    & B \\
\end{array}
\]
\caption{The singular square for the proof of Lemma~\ref{lem_multiplicationtable}. This square is singular by Theorem
\ref{rectsing} and \eqref{eq_99}. Clearly all of these matrices are in RRE form.} \label{fig_theend}
\end{figure}
\renewcommand{\arraystretch}{1}

This completes all the steps of the proof of the main theorem, Theorem~\ref{thm_main}, as outlined in
Section~\ref{sec_outline}.

\section{Concluding Remarks}
\label{sec_concluding}

The obvious outstanding question that remains is whether our main result Theorem~\ref{thm_main} is true more generally
for rank $r$ components in the range $n/3 \leq r < n-1$. The proof given here certainly does not extend in a straightforward way
to higher values of $r$. In terms of the outline of the proof given in Section~\ref{sec_outline}, Stage~1 of the proof
does carry across and hold for all $r$ in the range $1 \leq r < n-1$. However, both Stages~2 and 3 of the proof the
assumption $r < n/3$ is used in the proofs. Let us now see that as they stand the results in these section do not
extend to the case $n/3 \leq r < n-1$. Recall that in Stage~2 we show the abstract generators can be identified in such
a way as to put them (after this identification) into bijective correspondence with the elements of $GL_r(Q)$.
Clearly a necessary condition for this to be possible is that the set $\mathcal{F}$ has size at least equal to the size of
$GL_r(Q)$. However in general, without the assumption $r < n/3$ it is not always true that the size of $\mathcal{F}$ is greater
than $GL_r(Q)$. This can be seen by a simple counting argument. Indeed, if $\mathbb{F}_q$ is the finite field with $q$
elements, then the number of $\gr$-classes in the $\gd$-class $D_r$ of $M_n(\mathbb{F}_q)$ is
precisely the number of $r$-dimensional subspaces of an $n$-dimensional vector space over $\mathbb{F}_q$ which is
given by the Gaussian
coefficient
\[
\left[
\begin{array}{c}
n \\ r
\end{array}
\right]_q
=\frac{
(q^n-1)(q^{n-1}-1) \cdots (q^{n-r+1}-1)
}
{
(q^r-1)(q^{r-1}-1)\cdots(q-1),
}
\]
and the number of idempotents in each $\gr$-class of $D_r$ is easily seen to be equal to $q^{r(n-r)}$. On the other hand the size of the
general linear group $GL_m(\mathbb{F}_q)$ is  well known to be given by the formula
\[
|GL_m(\mathbb{F}_q)|
=
(q^m-q^0)(q^m-q^1)\cdots (q^m-q^{m-1}).
\]
So for example if we take $n=7$ and $r=7-2=5$ and consider the $\gd$-class $D_5$ of $M_7(\mathbb{F}_2)$ then the number
of idempotents in $D_5$ is given by
\[
2^{10} \frac{
(2^7-1)(2^6-1)(2^5-1)(2^4-1)(2^3-1)
}{
(2^5-1)(2^4-1)(2^3-1)(2^2-1)(2-1)
}
=
\frac{2^{10}(2^7-1)(2^6-1)}{3},
\]
which is easily checked to be strictly less than the number of elements in the group $GL_5(\mathbb{F}_2)$ which,
using the above formula,
is equal to
\[
(2^5-1)(2^5-2)(2^5-2)(2^5-2^2)(2^5-2^3)(2^5-2^4).
\]
Therefore, if Theorem~\ref{thm_main} does extend to values $n/3 \leq r < n-2$ then the reason that the theorem holds is different for the reason that it holds for low rank $r < n/3$.

The main result of \cite{GR2} for the full transformation monoid $T_n$, and the main result Theorem~\ref{thm_main},
suggest that it may well be worth investigating maximal subgroups of endomorphism monoids of finite dimensional
independence algebras (in the sense of \cite{Cameron, Gould}) which form a class of monoids generalising both the full
transformation monoid and the full linear monoid over a field. In particular this may provide a route to proving a
common generalisation of Theorem~\ref{thm_main} and the corresponding result for $T_n$ established in \cite{GR2}.

\begin{acknowledgement}
The author Gray would like to gratefully acknowledge the support and kind hospitality of the University of Novi Sad
during research visits in the spring and autumn of 2011 where part of this research was undertaken.
The authors would like to thank an anonymous referee,
whose comments led to several significant improvements to the article.
We would also like to thank John Meakin for insightful correspondence and helpful suggestions.
\end{acknowledgement}

\bibliographystyle{amsplain}

\end{document}